\documentclass[oneside, 11pt]{amsart} 

\usepackage{amsfonts}
\usepackage{amssymb}
\usepackage{latexsym}
\usepackage{graphics}
\usepackage[2emode]{psfrag}
\usepackage{amsthm}
\usepackage{amsmath}
\usepackage[all]{xy}

\addtolength{\hoffset}{-1.5cm}
\addtolength{\textwidth}{3cm}
\addtolength{\voffset}{-0.7cm}
\addtolength{\textheight}{3cm}

\begin{document}  

\newcommand{\norm}[1]{\| #1 \|}
\def\N{\mathbb N}
\def\Z{\mathbb Z}
\def\Q{\mathbb Q}
\def\mod{\textit{\emph{~mod~}}}
\def\R{\mathcal R}
\def\S{\mathcal S}
\def\*  C{{*  \mathcal C}} 
\def\C{\mathcal C}
\def\D{\mathcal D}
\def\J{\mathcal J}
\def\M{\mathcal M}
\def\T{\mathcal T}          

\newcommand{\Hom}{{\rm Hom}}
\newcommand{\End}{{\rm End}}
\newcommand{\Ext}{{\rm Ext}}
\newcommand{\Mor}{{\rm Mor}\,}
\newcommand{\Aut}{{\rm Aut}\,}
\newcommand{\Hopf}{{\rm Hopf}\,}
\newcommand{\Ann}{{\rm Ann}\,}
\newcommand{\Ker}{{\rm Ker}\,}
\newcommand{\Reg}{{\rm Reg}\,}
\newcommand{\Coker}{{\rm Coker}\,}
\newcommand{\Img}{{\rm Im}\,}
\newcommand{\coim}{{\rm Coim}\,}
\newcommand{\Trace}{{\rm Trace}\,}
\newcommand{\Char}{{\rm Char}\,}
\newcommand{\Mod}{{\rm mod}}
\newcommand{\Spec}{{\rm Spec}\,}
\newcommand{\sgn}{{\rm sgn}\,}
\newcommand{\Id}{{\rm Id}\,}
\newcommand{\Com}{{\rm Com}\,}
\newcommand{\codim}{{\rm codim}}
\newcommand{\Mat}{{\rm Mat}}
\newcommand{\can}{{\rm can}}
\newcommand{\sign}{{\rm sign}}
\newcommand{\kar}{{\rm kar}}
\newcommand{\rad}{{\rm rad}}

\def\lan{\langle}
\def\ran{\rangle}
\def\ot{\otimes}

\def\id{{\small \textit{\emph{1}}}\!\!1}    
\def\To{{\multimap\!\to}}
\def\bigperp{{\LARGE\textrm{$\perp$}}} 
\newcommand{\QED}{\hspace{\stretch{1}}
\makebox[0mm][r]{$\Box$}\\}

\def\RR{{\mathbb R}}
\def\FF{{\mathbb F}}
\def\NN{{\mathbb N}}
\def\CC{{\mathbb C}}
\def\DD{{\mathbb D}}
\def\ZZ{{\mathbb Z}}
\def\QQ{{\mathbb Q}}
\def\HH{{\mathbb H}}
\def\units{{\mathbb G}_m}
\def\GG{{\mathbb G}}
\def\EE{{\mathbb E}}
\def\FF{{\mathbb F}}
\def\rightact{\hbox{$\leftharpoonup$}}
\def\leftact{\hbox{$\rightharpoonup$}}

\newcommand{\Aa}{\mathcal{A}}
\newcommand{\Bb}{\mathcal{B}}
\newcommand{\Cc}{\mathcal{C}}
\newcommand{\Dd}{\mathcal{D}}
\newcommand{\Ee}{\mathcal{E}}
\newcommand{\Ff}{\mathcal{F}}
\newcommand{\Hh}{\mathcal{H}}
\newcommand{\Ii}{\mathcal{I}}
\newcommand{\Mm}{\mathcal{M}}
\newcommand{\Pp}{\mathcal{P}}
\newcommand{\Rr}{\mathcal{R}}
\def\*  C{{}*  \hspace*  {-1pt}{\Cc}}

\def\text#1{{\rm {\rm #1}}}

\def\smashco{\mathrel>\joinrel\mathrel\triangleleft}
\def\cosmash{\mathrel\triangleright\joinrel\mathrel<}

\def\Nat{\dul{\rm Nat}}

\renewcommand{\subjclassname}{\textup{2000} Mathematics Subject
     Classification}

\newtheorem{prop}{Proposition}[section] 
\newtheorem{lemma}[prop]{Lemma}
\newtheorem{cor}[prop]{Corollary}
\newtheorem{theo}[prop]{Theorem}
                       
\theoremstyle{definition}
\newtheorem{Def}[prop]{Definition}
\newtheorem{ex}[prop]{Example}
\newtheorem{exs}[prop]{Examples}
\newtheorem{Not}[prop]{Notation}
\newtheorem{Ax}[prop]{Axiom}
\newtheorem{rems}[prop]{Remarks}
\newtheorem{rem}[prop]{Remark}
\newtheorem{op}[prop]{Open problem}
\newtheorem{conj}[prop]{Conjecture}

\def\smashco{\mathrel>\joinrel\mathrel\triangleleft}
\def\curlarrow{\mathrel\sim\joinrel\mathrel>}

\title{Commutative pseudo equality algebras}

\author[Lavinia Corina Ciungu]{Lavinia Corina Ciungu} 

\begin{abstract}
Pseudo equality algebras were initially introduced by Jenei and $\rm K\acute{o}r\acute{o}di$ as a possible algebraic semantic for fuzzy type theory, and they have been revised by Dvure\v censkij and Zahiri under the name of JK-algebras. 
In this paper we define and study the commutative pseudo equality algebras. 
We give a characterization of commutative pseudo equality algebras and we prove that an invariant 
pseudo equality algebra is commutative if and only if its corresponding pseudo BCK(pC)-meet-semilattice 
is commutative. 
Other results consist of proving that every commutative pseudo equality algebra is a distributive lattice and 
every finite invariant commutative pseudo equality algebra is a symmetric pseudo equality algebra. 
We also introduce and investigate the commutative deductive systems of pseudo equality algebras. 
As applications of these notions and results we define and study the measures and measure-morphisms 
on pseudo equality algebras, we prove new properties of state pseudo equality algebras, and we introduce 
and investigate the pseudo-valuations on pseudo equality algebras. \\

\textbf{Keywords:} {Pseudo equality algebra, pseudo BCK-algebra, pseudo BCK-meet-semilattice, commutative pseudo equality algebra, commutative deductive system, measure, measure-morphism, pseudo-valuation} \\
\textbf{AMS classification (2000):} 03G25, 06F05, 06F35
\end{abstract}

\maketitle

\section{Introduction}

\emph{Fuzzy type theory} (FTT) has been developed by V. $\rm Nov\acute{a}k$ (\cite{Nov1}) as a fyzzy logic of higher order, the fuzzy version of the classical type theory of the classical logic of higher order.
Other formal systems of FTT have also been described by  V. $\rm Nov\acute{a}k$, and
all these models are \emph{implication-based}, while the models of the classical type theory are \emph{equality based} having the identity (equality) as the principal connective. 
Since the first algebraic models for the set of truth values of FTT are residuated lattices, their basic operations are $\wedge$ (meet), $\vee$ (join), $\odot$ (multiplication) and $\rightarrow$ (residuum). In fuzzy logic the last operation is a semantic interpretation of the implication, while the logical equivalence is intepreted by the biresiduum $x\leftrightarrow y=(x\rightarrow y)\wedge (y\rightarrow x)$. 
Thus a basic connective has a semantic interpretation by a derived operation.
In order to overcome this discrepancy, we need a specific algebra of truth values for the fuzzy type theory. 
The first version of such an algebra has been introduced by V. $\rm Nov\acute{a}k$ (\cite{Nov2}) under 
the name of \emph{EQ-algebra} and a new concept of fuzzy type theory has been developed based on EQ-algebras (\cite{Nov9}). 
A fuzzy-equality based logic called \emph{EQ-logic} has also been introduced (\cite{Nov10}), 
while the EQ-logics with delta connective were defined and investigated in \cite{Dyba1}. \\
According to \cite{Nov9}, a non-commutative EQ-algebra is an algebra $(E, \wedge, \odot, \thicksim, 1)$ of the type $(2, 2, 2, 0)$ such that the following axioms are fulfilled for all $x, y, z, u\in E$: \\
$(E_1)$ $(E,\wedge,1)$ is a commutative idempotent monoid w.r.t $\le$ ($x\le y$ defined as $x\wedge y=x),$ \\
$(E_2)$ $(E,\odot,1)$ is a monoid such that the operation $\odot$ is isotone w.r.t. $\le,$  \\
$(E_3)$ $x\thicksim x=1,$ 
$\hspace*{10.7cm}$ (\emph{reflexivity}) \\
$(E_4)$ $((x\wedge y)\thicksim z)\odot (u\thicksim x)\le z\thicksim(u\wedge y),$ 
$\hspace*{6cm}$ (\emph{substitution}) \\
$(E_5)$ $(x\thicksim y)\odot (z\thicksim u)\le (x\thicksim z)\thicksim (y\thicksim u),$ 
$\hspace*{6cm}$ (\emph{congruence}) \\
$(E_6)$ $(x\wedge y\wedge z)\thicksim x\le (x\wedge y)\thicksim x,$ 
$\hspace*{5cm}$ (\emph{isotonicity of implication}) \\
$(E_7)$ $(x\wedge y)\thicksim x\le (x\wedge y\wedge z)\thicksim (x\wedge z),$ 
$\hspace*{4cm}$ (\emph{antitonicity of implication}) \\
$(E_8)$ $x\odot y\le x\thicksim y$. 
$\hspace*{9.8cm}$ (\emph{boundedness}) \\
An EQ-algebra is \emph{commutative} if $\odot$ is commutative. \\
The operation $\thicksim$ is a fuzzy equality and the implication $\rightarrow$ is defined by 
$x\rightarrow y=(x\wedge y)\thicksim x$, hence the tie between multiplication and residuation is weaker than in the 
case of residuated lattices. In this sense, EQ-algebras generalize the residuated lattices. \\
As S. Jenei mentioned in \cite{Jen2}, if the product operation in EQ-algebras is replaced by another binary operation 
smaller or equal than the original product we still obtain an EQ-algebra, and this fact might make it difficult to 
obtain certain algebraic results. \\
For this reason, S. Jenei introduced in \cite{Jen2} a new structure, called \emph{equality algebra} consisting of two 
binary operations - meet and equivalence, and constant $1$. 
It was proved in \cite{Jen1}, \cite{Ciu1} that any equality algebra has a corresponding BCK-meet-semilattice 
satisying the \emph{contraction} condition (BCK(C)-meet-semilattice, for short)  
and any BCK(C)-meet-semilattice has a corresponding equality algebra. 
Since the equality algebras could also be ``candidates" for a possible algebraic semantics for fuzzy type theory, 
their study is highly motivated. 
As a generalization of equality algebras, Jenei and $\rm K\acute{o}r\acute{o}di$ introduced in \cite{Jen1} 
a concept of \emph{pseudo equality algebras} and proved that the pseudo equality algebras are term equivalent to 
pseudo BCK-meet-semilattices. 
In \cite{Ciu1} a gap was found in the proof of this result and a counterexample was given as well as a correct 
version of it. Moreover, Dvure\v censkij and Zahiri showed in \cite{Dvu7} that every pseudo equality algebra in the sense of \cite{Jen1} is an equality algebra and they defined and investigated a new concept of pseudo equality algebras 
(called JK-algebras) and established a connection between pseudo equality algebras and a special class of pseudo BCK-meet-semilattices (pseudo BCK(pC)-meet-semilattices).  
The internal states on a pseudo equality algebra have been introduced and investigated in \cite{Ciu5}, while state 
pseudo equality algebras were studied in \cite{Ciu8}. 
Apart from their logical interest, equality algebras as well as pseudo equality algebras seem to have important algebraic properties and it is worth studying them from an algebraic point of view. 
Commutative BCK-algebras were studies in \cite{Dvu2, DvPu, Dvu3}, while commutative pseudo BCK-algebras 
were originally defined by G. Georgescu and A. Iorgulescu in \cite{Geo15} 
under the name of \emph{semilattice-ordered pseudo BCK-algebras}. Properties of these structures were 
investigated by J. K{\"u}hr in \cite{Kuhr6, Kuhr2}. 
A characterization of commutative pseudo BCK-algebras is given in \cite{Ciu7}, where the commutative 
deductive systems of pseudo BCK-algebras are defined and investigated. \\
States or measures give a probabilistic interpretation of randomness of events of given algebraic structures. 
For MV-algebras, Mundici introduced states (an analogue of probability measures) in 1995, \cite{Mun1}, as 
averaging of the truth-value in \L ukasiewicz logic.
Measures on BCK algebras were introduced and studied by A. Dvure\v censkij in \cite{Dvu1, DvPu, Dvu4}.
Measures on pseudo BCK-algebras were introduced and studied in \cite{Ciu6}, and it was proved that 
the quotient pseudo BCK-algebra that is downwards-directed over the kernel of a measure can
be embedded as a subalgebra into the negative cone of an abelian and Archimedean $\ell$-group. 
Pseudo-valuations were introduced and studied for residuated lattices (\cite{Bus3}), BCK-algebras 
(\cite{DoKa1}, \cite{Jun1}), BE-algebras (\cite{Lee1}), while the notion of a commutative pseudo-valuation  
was defined in \cite{DoKa2} for BCK-algebras. \\
In this paper we define and study the commutative pseudo equality algebras. 
We give a characterization of commutative pseudo equality algebras and we prove that an invariant  
pseudo equality algebra is commutative if and only if its corresponding pseudo BCK(pC)-meet-semilattice 
is commutative. Other results consist of proving that every commutative pseudo equality algebra is a distributive lattice and every finite invariant commutative pseudo equality algebra is a symmetric pseudo equality algebra. 
We introduce the notion of a commutative deductive system of a pseudo equality algebra and we give equivalent  
conditions for this notion. 
It is proved that a normal deductive system $H$ of a pseudo equality algebra $A$ is commutative if and only if 
$A/H$ is a commutative pseudo equality algebra.
As applications of the above mentioned notions and results we define and study the measures and measure-morphisms 
on pseudo equality algebras, and we prove new properties of state pseudo equality algebras. 
We prove that any measure-morphism on a pseudo equality algebra is a measure on it, and the kernel of a measure 
is a commutative deductive system. 
We show that the quotient pseudo equality algebra over the kernel of a measure is a commutative pseudo equality 
algebra. 
It is also proved that a pseudo equality algebra possessing an order-determining system is commutative. 
Other main results consist of proving that the measures on a pseudo equality algebra and the and measure-morphisms on 
a linearly ordered pseudo equality algebra coincide with those on its corresponding pseudo BCK(pC)-meet semilattice.   
We prove that the two types of internal states on a pseudo equality algebra coincide if and 
only if it is a commutative pseudo equality algebra.
If moreover the pseudo equality algebra is symmetric and linearly ordered, then we show that the state-morphisms coincide with the two types of internal states. 
The notions of pseudo-valuation and commutative pseudo-valuation on pseudo equality algebras are defined and investigated. Given a pseudo equality algebra $A$, it is proved that the kernel of a commutative pseudo-valuation on $A$ is a commutative deductive system of $A$. If moreover $A$ is commutative, then we prove 
that any pseudo-valuation on $A$ is commutative.

$\vspace*{5mm}$

\section{Preliminaries on pseudo equality algebras}

Pseudo equality algebras have been firstly defined by Jenei and $\rm K\acute{o}r\acute{o}di$ in \cite{Jen1} 
as a generalization of equality algebras. 
Dvure\v censkij and Zahiri showed in \cite{Dvu7} that every pseudo equality algebra in the sense of 
\cite{Jen1} is an equality algebra. They also defined and investigated a new concept of pseudo equality algebras 
(JK-algebras) and established a connection between pseudo equality algebras and a special class of pseudo BCK-meet-semilattices.
In this section we recall the main notions and results and we present new properties of pseudo equality algebras. 

\begin{Def} \label{ps-eq-05} $\rm($\cite{Dvu7}$\rm)$
A \emph{pseudo equality algebra} (or a \emph{JK-algebra}) is an algebra 
$\mathcal{A}=(A, \wedge, \thicksim, \backsim, 1)$ of the type $(2, 2, 2, 0)$ such that the following axioms are fulfilled for all $x, y, z\in A$: \\
$(A_1)$ $(A, \wedge, 1)$ is a meet-semilattice with top element $1,$ \\
$(A_2)$ $x \thicksim x = x \backsim x = 1,$ \\ 
$(A_3)$ $x \thicksim 1 = 1 \backsim x = x,$ \\
$(A_4)$ $x \le y \le z$ implies  
$x \thicksim z \le y \thicksim z$, $x \thicksim z \le x \thicksim y$, $z \backsim x \le z \backsim y$ and  
$z \backsim x \le y \backsim x,$ \\ 
$(A_5)$ $x \thicksim y \le (x \wedge z) \thicksim (y \wedge z)$  and 
        $x \backsim y \le (x \wedge z) \backsim (y \wedge z),$ \\  
$(A_6)$ $x \thicksim y \le (z \thicksim x) \backsim (z \thicksim y)$ and 
        $x \backsim y \le (x \backsim z) \thicksim(y \backsim z),$ \\
$(A_7)$ $x \thicksim y \le (x \thicksim z) \thicksim (y \thicksim z)$ and 
        $x \backsim y \le (z \backsim x) \backsim (z \backsim y)$. 
\end{Def}        
        
The operation $\wedge$ is called \emph{meet}(\emph{infimum}) and $\thicksim$, $\backsim$ are called 
\emph{equality} operations. We write $x \le y$ (and $y \ge x$) iff $x \wedge y = x$. \\
In the algebra $\mathcal{A}$ other two operations are defined, called \emph{implications}: \\
$\hspace*{5cm}$ $x \rightarrow y = (x\wedge y) \thicksim x$ \\
$\hspace*{5cm}$ $x \rightsquigarrow y = x \backsim (x \wedge y)$. \\
In the sequel we will also refer to the pseudo equality algebra $(A, \wedge, \thicksim, \backsim, 1)$ 
by its universe $A$. 
We will agree that $\thicksim$, $\backsim$, $\rightarrow$ and $\rightsquigarrow$ have higher priority than the operation $\wedge$. \\ 
A pseudo equality algebra $A$ is called \emph{bounded} if there exists an element $0\in A$ such that $0\le x$ 
for all $x\in A$. A bounded pseudo equality algebra is denoted by $(A, \wedge, \thicksim, \backsim, 0, 1)$.  

\begin{prop} \label{ps-eq-05-10} $\rm($\cite{Dvu7}$\rm)$ 
In any pseudo equality algebra $(A, \wedge, \thicksim, \backsim, 1)$ the following hold for all $x, y, z\in A$: \\
$(1)$ $x\wedge y\thicksim x\le x\wedge y\wedge z\thicksim x\wedge z$ and 
      $x\rightarrow y\le x\wedge z\le x\wedge z\rightarrow y;$ \\ 
$(2)$ $x\backsim x\wedge y\le x\wedge z\backsim x\wedge y\wedge z$ and 
      $x\rightsquigarrow y\le x\wedge z\rightsquigarrow y$. 
\end{prop}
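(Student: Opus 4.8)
The plan is to obtain both statements as single instances of axiom $(A_5)$, after rewriting everything in terms of the meet together with the definitions of the two implications. Recall that $x \rightarrow y = (x \wedge y) \thicksim x$ and $x \rightsquigarrow y = x \backsim (x \wedge y)$, and that in the meet-semilattice $(A, \wedge, 1)$ we have $(x \wedge z) \wedge y = x \wedge y \wedge z$. The whole argument is really just a matter of substituting the right terms into $(A_5)$ and reading off the result in implication notation; I do not expect to need $(A_4)$, $(A_6)$ or $(A_7)$ at all.

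For part $(1)$ I would instantiate the first inequality of $(A_5)$, namely $u \thicksim v \le (u \wedge z) \thicksim (v \wedge z)$, at $u = x \wedge y$ and $v = x$. This yields
\[
(x \wedge y) \thicksim x \le \bigl((x \wedge y) \wedge z\bigr) \thicksim (x \wedge z) = (x \wedge y \wedge z) \thicksim (x \wedge z),
\]
which is exactly the first asserted inequality. Reading the left-hand side as $x \rightarrow y$, and the right-hand side as $(x \wedge z) \rightarrow y$ (using $(x \wedge z) \wedge y = x \wedge y \wedge z$), this is precisely the claim $x \rightarrow y \le (x \wedge z) \rightarrow y$ in implication form.

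For part $(2)$ I would run the symmetric computation with the second inequality of $(A_5)$, $u \backsim v \le (u \wedge z) \backsim (v \wedge z)$, now taking $u = x$ and $v = x \wedge y$, to obtain
\[
x \backsim (x \wedge y) \le (x \wedge z) \backsim \bigl((x \wedge y) \wedge z\bigr) = (x \wedge z) \backsim (x \wedge y \wedge z).
\]
Here the left-hand side is $x \rightsquigarrow y$ and the right-hand side is $(x \wedge z) \rightsquigarrow y$, so this gives $x \rightsquigarrow y \le (x \wedge z) \rightsquigarrow y$, completing the proof.

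There is essentially no computational obstacle: each half is a single application of $(A_5)$, and the content of the proposition is just the monotonicity already built into that axiom, transported to the derived operations $\rightarrow$ and $\rightsquigarrow$. The only points that require attention are choosing the correct substitution — placing $x \wedge y$ into the slot that will afterwards be met with $z$ — and using commutativity and associativity of $\wedge$ to recognize $(x \wedge z) \wedge y$ as $x \wedge y \wedge z$, so that the equality form and the implication form of each inequality coincide.
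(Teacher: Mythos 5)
Your proof is correct: each half of the proposition is exactly one instance of $(A_5)$ together with the definitions $x\rightarrow y=(x\wedge y)\thicksim x$ and $x\rightsquigarrow y=x\backsim(x\wedge y)$, and your substitutions, as well as the identification $(x\wedge z)\wedge y=x\wedge y\wedge z$ needed to pass between the equality form and the implication form, are all as they should be. Note that the paper itself states this proposition without proof, citing \cite{Dvu7}, so there is no internal argument to compare against; your derivation is the natural (and standard) one. One small remark: the printed claim ``$x\rightarrow y\le x\wedge z\le x\wedge z\rightarrow y$'' in part $(1)$ is a typographical duplication, and you read it, correctly, as $x\rightarrow y\le (x\wedge z)\rightarrow y$, in parallel with the second inequality of part $(2)$.
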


\begin{prop} \label{ps-eq-10} $\rm($\cite{Dvu7}$\rm)$ 
In any pseudo equality algebra $(A, \wedge, \thicksim, \backsim, 1)$ the following hold for all $x, y, z\in A$: \\
$(1)$ $x\thicksim y\le y\rightarrow x$ and $x\backsim y\le x\rightsquigarrow y;$ \\
$(2)$ $x\le ((y\thicksim x)\backsim y)\wedge (y \thicksim (x\backsim y));$ \\
$(3)$ $x\backsim y=1$ or $y\thicksim x=1$ imply $x\le y;$ \\
$(4)$ $x\thicksim y=1$ implies $z\thicksim x\le z\thicksim y$ and 
      $x\backsim y=1$ implies $y\backsim z\le x\backsim z;$ \\
$(5)$ $x\le y$ iff $x\rightarrow y=1$ iff $x\rightsquigarrow y=1;$ \\
$(6)$ $1\rightarrow x=1\rightsquigarrow x=x$, 
      $x\rightarrow 1=x\rightsquigarrow 1=x\rightarrow x=x\rightsquigarrow x=1$, 
      $x\rightarrow x=x\rightsquigarrow x=1;$ \\
$(7)$ $x\le (y \rightarrow x)\wedge (y \rightsquigarrow x);$ \\
$(8)$ $x\le ((x \rightarrow y)\rightsquigarrow y)\wedge ((x \rightsquigarrow y)\rightarrow y);$ \\
$(9)$ $x\rightarrow y\le (y\rightarrow z)\rightsquigarrow (x\rightarrow z)$ and 
      $x\rightsquigarrow y\le (y\rightsquigarrow z)\rightarrow (x\rightsquigarrow z);$ \\  
$(10)$ $x\le y\rightarrow z$ iff $y\le x\rightsquigarrow z;$ \\
$(11)$ $x\rightarrow (y\rightsquigarrow z)=y\rightsquigarrow (x\rightarrow z);$ \\
$(12)$ $x\rightarrow y\le (x\wedge z)\rightarrow (y\wedge z)$ and  
       $x\rightsquigarrow y\le (x\wedge z)\rightsquigarrow (y\wedge z);$ \\
$(13)$ $x\rightarrow y=x\rightarrow (x\wedge y)$ and $x\rightsquigarrow y=x\rightsquigarrow (x\wedge y);$ \\
$(14)$ $1\thicksim x=x\backsim 1;$ \\
$(15)$ if $x\le y$, then $x\le (x\thicksim y)\wedge (y\backsim x);$ \\
$(16)$ $x\thicksim y\le 1\thicksim (y\thicksim x)$ and $x\backsim y\le 1\thicksim (y\backsim x)$. 
\end{prop}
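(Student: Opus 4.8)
The plan is to prove the sixteen items in a dependency order, deriving each from the axioms $(A_1)$--$(A_7)$, the definitions $x \rightarrow y = (x\wedge y)\thicksim x$ and $x\rightsquigarrow y = x\backsim(x\wedge y)$, Proposition \ref{ps-eq-05-10}, and the items already established. Almost every item is a single well-chosen instance of one axiom; the real difficulty is concentrated in the adjunction $(10)$, together with the transitivity $(9)$ and the exchange identity $(11)$, which form a genuine ``pseudo BCK core.'' The first task is to fix the order so that later items may freely cite earlier ones.

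I would begin with the items that are immediate from the definitions and the unit/reflexivity axioms $(A_2)$, $(A_3)$. Item $(6)$ is pure computation: expanding $1\rightarrow x = (1\wedge x)\thicksim 1$, $x\rightarrow 1 = (x\wedge 1)\thicksim x$, etc., and collapsing via $1\wedge x = x$, $x\thicksim x = 1$ and $x\thicksim 1 = 1\backsim x = x$. Item $(13)$ is just $x\wedge(x\wedge y) = x\wedge y$ inserted into the definition. Item $(1)$ is the instance of the first half of $(A_5)$ with third slot $y$, namely $x\thicksim y \le (x\wedge y)\thicksim(y\wedge y) = (x\wedge y)\thicksim y = y\rightarrow x$, and dually. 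Item $(7)$ is the instance of $(A_5)$ with second slot $1$ and third slot $y$, giving $x = x\thicksim 1 \le (x\wedge y)\thicksim y = y\rightarrow x$ via $(A_3)$, and its $\backsim$-analogue yields $x\le y\rightsquigarrow x$. Item $(16)$ is the instance $z:=x$ of the first half of $(A_7)$ and of the second half of $(A_6)$, using $x\thicksim x = x\backsim x = 1$; item $(12)$ is Proposition \ref{ps-eq-05-10} rewritten once both implications are expanded.

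Next come the items built on the exchange axiom $(A_6)$. The key early lemma is $(2)$: the instance $x\thicksim 1 \le (y\thicksim x)\backsim(y\thicksim 1)$ of the first half of $(A_6)$ reads, after $(A_3)$, as $x\le (y\thicksim x)\backsim y$, and the mirror instance $1\backsim x \le (1\backsim y)\thicksim(x\backsim y)$ of the second half reads as $x\le y\thicksim(x\backsim y)$. Locating these two substitutions is the one genuinely clever micro-step among the easy items. From $(2)$ and $(A_3)$, item $(3)$ is immediate ($y\thicksim x = 1$ forces $x\le 1\backsim y = y$, and $x\backsim y = 1$ forces $x\le y\thicksim 1 = y$); item $(5)$ then follows, since $x\le y$ gives $x\rightarrow y = x\thicksim x = 1 = x\backsim x = x\rightsquigarrow y$ and each converse is a reading of $(3)$ with $x\wedge y$ substituted. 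Item $(4)$ uses $(A_6)$ again: $x\thicksim y = 1$ forces $(z\thicksim x)\backsim(z\thicksim y) = 1$, and $(3)$ converts this to $z\thicksim x\le z\thicksim y$, with the dual using the second half of $(A_6)$. Item $(14)$ pairs the two halves of $(A_6)$ at $z:=x$ with antisymmetry, and item $(15)$ is simply the restriction of $(7)$ to the case $x\le y$, where $y\rightarrow x = x\thicksim y$ and $y\rightsquigarrow x = y\backsim x$.

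The main obstacle is the adjunction $(10)$, $x\le y\rightarrow z \Leftrightarrow y\le x\rightsquigarrow z$, which is the true bridge between the two equalities $\thicksim$ and $\backsim$. I expect to prove each implication separately by feeding $(2)$ and the weakening $(7)$ through the exchange axiom $(A_6)$ and the monotonicity axiom $(A_4)$ (equivalently Proposition \ref{ps-eq-05-10}), after expanding $y\rightarrow z = (y\wedge z)\thicksim y$ and $x\rightsquigarrow z = x\backsim(x\wedge z)$; finding the correct chain of substitutions here is where the work lies. Once $(10)$ is in hand, item $(8)$ is a clean corollary: reading $x\le(x\rightarrow y)\rightsquigarrow y$ and $x\le(x\rightsquigarrow y)\rightarrow y$ through $(10)$ reduces them to the trivial inequalities $x\rightarrow y\le x\rightarrow y$ and $x\rightsquigarrow y\le x\rightsquigarrow y$. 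The transitivity $(9)$ and the exchange identity $(11)$ are not formal consequences of $(10)$ alone; I expect them to require the congruence structure $(A_6)$--$(A_7)$ interacting with the meet, and I would package their proofs with that of $(10)$, using antisymmetry of $\le$ to upgrade the two resulting inequalities in $(11)$ into an equality. I therefore anticipate that the only nontrivial search is the substitution chain establishing the adjunction and its two companions, with every other item being bookkeeping over the axioms.
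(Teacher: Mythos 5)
The paper never proves this proposition at all: it is recalled verbatim from \cite{Dvu7}, so there is no in-house argument to compare against and your attempt must stand on its own. Judged on its own, the routine part of your plan is in good shape: I checked every substitution you specify for items $(1)$--$(7)$ and $(12)$--$(16)$ and they are all correct --- for instance, $(2)$ is indeed the pair of $(A_6)$-instances $x\thicksim 1\le (y\thicksim x)\backsim (y\thicksim 1)$ and $1\backsim x\le (1\backsim y)\thicksim (x\backsim y)$ collapsed by $(A_3)$; $(14)$ is the two $(A_6)$-instances with third entry $x$ plus antisymmetry; $(16)$ is $(A_7)$ and the second half of $(A_6)$ at $z:=x$; and $(3)$, $(4)$, $(5)$, $(15)$ follow from these exactly as you say. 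The reduction of $(8)$ to $(10)$ is also valid, since $(8)$ is the unit instance of the adjunction.

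The genuine gap is that $(9)$, $(10)$ and $(11)$ --- what you yourself call the pseudo BCK core --- are never proved: for $(10)$ you only name ingredients and say the substitution chain remains to be found, and $(9)$, $(11)$ are deferred to be packaged with it. This is not just unfinished bookkeeping, because the architecture you propose (prove $(10)$ directly, then read off $(8)$) is the hard way around: a direct attack on $(10)$ stalls on the chain hypothesis built into $(A_4)$. Concretely, from $x\le y\rightarrow z$ one gets $((y\wedge z)\thicksim y)\backsim (x\wedge z)\le x\backsim (x\wedge z)$ by $(A_4)$, but closing the argument then requires $y\le ((y\wedge z)\thicksim y)\backsim (x\wedge z)$, and the obvious $(A_4)$-step for that needs $y\wedge z\le x\wedge z$, which is unavailable. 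The workable order is the reverse. First prove $(8)$ outright: the first half of $(A_6)$ with the triple $(x,1,x\wedge y)$ gives $x\le (x\wedge y\thicksim x)\backsim (x\wedge y)$, and since $x\wedge y\le (x\rightarrow y)\wedge y\le x\rightarrow y$ (the first inequality using $(7)$), $(A_4)$ upgrades this to $x\le (x\rightarrow y)\rightsquigarrow y$; dually for the other half. Then $(10)$ is two lines: if $x\le y\rightarrow z$, antitonicity of the first argument (Proposition \ref{ps-eq-05-10} --- which, incidentally, is a consequence of $(A_5)$, not of $(A_4)$ as your parenthetical suggests) gives $(y\rightarrow z)\rightsquigarrow z\le x\rightsquigarrow z$, whence $y\le x\rightsquigarrow z$ by $(8)$; the converse is symmetric. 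For $(9)$ you are right that $(A_7)$ must enter; one correct chain is $y\rightarrow z=(y\wedge z)\thicksim y\le (x\wedge y\wedge z)\thicksim (x\wedge y)$ by $(A_5)$, then $\le (x\rightarrow (y\wedge z))\thicksim (x\rightarrow y)$ by $(A_7)$, then $\le ((x\rightarrow y)\wedge (x\rightarrow z))\thicksim (x\rightarrow y)=(x\rightarrow y)\rightarrow (x\rightarrow z)$ by $(A_4)$ (using monotonicity of $\rightarrow$ in its second argument, itself an $(A_4)$ consequence), after which $(10)$ converts $y\rightarrow z\le (x\rightarrow y)\rightarrow (x\rightarrow z)$ into $(9)$; finally $(11)$ follows from $(9)$, $(8)$, antitonicity and antisymmetry exactly as you sketch. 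Until these chains (or equivalents) are written down, your proposal proves only the easy two-thirds of the proposition, and the missing third is where all the substance lies.
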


\begin{prop} \label{ps-eq-20} $\rm($\cite{Ciu8}$\rm)$
In any pseudo equality algebra $(A, \wedge, \thicksim, \backsim, 1)$ the following hold for all $x, y\in A$:\\
$(1)$ $y\le (x\wedge y\thicksim x)\wedge (x\backsim x\wedge y);$ \\ 
$(2)$ if $x\le y$, then $x\le (x\thicksim y)\wedge (y\backsim x);$ \\
$(3)$ $x\le ((x\wedge y\thicksim x)\backsim y)\wedge (y\thicksim (x\backsim x\wedge y));$ \\
$(4)$ $y\le ((x\wedge y\thicksim x)\backsim y)\wedge (y\thicksim (x\backsim x\wedge y));$ \\
$(5)$ $x\thicksim y\le x\wedge y\thicksim y$ and $x\backsim y\le x\backsim x\wedge y$. 
\end{prop}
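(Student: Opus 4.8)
The plan is to read every compound term in the statement through the two implications $x\rightarrow y=(x\wedge y)\thicksim x$ and $x\rightsquigarrow y=x\backsim(x\wedge y)$ and then to quote the appropriate line of Proposition~\ref{ps-eq-10}. Under this reading $x\wedge y\thicksim x=x\rightarrow y$ and $x\backsim x\wedge y=x\rightsquigarrow y$, so item $(1)$ is exactly $y\le(x\rightarrow y)\wedge(x\rightsquigarrow y)$, i.e.\ Proposition~\ref{ps-eq-10}$(7)$ with the roles of $x$ and $y$ exchanged, and item $(2)$ is verbatim Proposition~\ref{ps-eq-10}$(15)$. For item $(5)$, whose terms are \emph{not} implications, I would instead instantiate axiom $(A_5)$ directly: putting $z:=y$ in its first inequality gives $x\thicksim y\le(x\wedge y)\thicksim(y\wedge y)=(x\wedge y)\thicksim y$, and putting $z:=x$ in its second inequality gives $x\backsim y\le(x\wedge x)\backsim(y\wedge x)=x\backsim(x\wedge y)$.

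Having $(1)$ and $(2)$ available, item $(4)$ falls out componentwise. From $(1)$ we have $y\le x\rightarrow y$ and $y\le x\rightsquigarrow y$; feeding the first of these into Proposition~\ref{ps-eq-10}$(15)$ gives $y\le(x\rightarrow y)\backsim y$, and feeding in the second gives $y\le y\thicksim(x\rightsquigarrow y)$. Taking the meet of the two bounds yields $(4)$.

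The only step that takes real work is item $(3)$, and here I would substitute the meet $x\wedge y$ into the second slot of Proposition~\ref{ps-eq-10}$(2)$. Since $(x\wedge y)\thicksim x=x\rightarrow y$ and $x\backsim(x\wedge y)=x\rightsquigarrow y$, that substitution produces
\[
x\le\bigl((x\rightarrow y)\backsim(x\wedge y)\bigr)\wedge\bigl((x\wedge y)\thicksim(x\rightsquigarrow y)\bigr),
\]
so it remains only to enlarge the two inner occurrences of $x\wedge y$ to $y$. For this I would use the chains $x\wedge y\le y\le x\rightarrow y$ and $x\wedge y\le y\le x\rightsquigarrow y$ furnished by $(1)$ and apply axiom $(A_4)$: monotonicity of $\backsim$ in its second argument gives $(x\rightarrow y)\backsim(x\wedge y)\le(x\rightarrow y)\backsim y$, while monotonicity of $\thicksim$ in its first argument gives $(x\wedge y)\thicksim(x\rightsquigarrow y)\le y\thicksim(x\rightsquigarrow y)$. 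Because $\wedge$ is order preserving, the meet of these upper bounds is again above $x$, which is precisely $(3)$.

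The main obstacle is purely organisational: one must pick, for each of the four moving arguments, the correct clause of $(A_4)$ and verify that the middle and top terms of the relevant chain $x\wedge y\le y\le(\cdot)$ sit in the order the axiom demands. This is why I would establish $(1)$ at the outset, since every comparison of the form $y\le(\text{implication})$ needed in $(3)$ and $(4)$ comes from it.
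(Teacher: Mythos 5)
Your proposal is correct, but note that the paper itself contains no proof of Proposition \ref{ps-eq-20}: it is imported from \cite{Ciu8} and stated without argument, so there is nothing in-paper to compare against; what you have written is a legitimate self-contained reconstruction from material that precedes the statement. All the reductions check out: $(1)$ is Proposition \ref{ps-eq-10}$(7)$ with $x$ and $y$ interchanged, $(2)$ is Proposition \ref{ps-eq-10}$(15)$ verbatim, and $(5)$ follows from $(A_5)$ with $z:=y$ (respectively $z:=x$, using commutativity and idempotence of $\wedge$). For $(3)$, substituting $x\wedge y$ for $y$ in Proposition \ref{ps-eq-10}$(2)$ does give $x\le\bigl((x\rightarrow y)\backsim(x\wedge y)\bigr)\wedge\bigl((x\wedge y)\thicksim(x\rightsquigarrow y)\bigr)$, and your two applications of $(A_4)$ are exactly licensed by the chains $x\wedge y\le y\le x\rightarrow y$ and $x\wedge y\le y\le x\rightsquigarrow y$ coming from $(1)$: the clause $z\backsim x\le z\backsim y$ handles the first component and the clause $x\thicksim z\le y\thicksim z$ handles the second. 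For $(4)$, applying Proposition \ref{ps-eq-10}$(15)$ to the inequalities $y\le x\rightarrow y$ and $y\le x\rightsquigarrow y$ yields precisely the two components $y\le(x\rightarrow y)\backsim y$ and $y\le y\thicksim(x\rightsquigarrow y)$. The overall strategy — translate every compound term into the implication notation and reduce to Proposition \ref{ps-eq-10} plus the axioms — is the natural one, and your bookkeeping with the clauses of $(A_4)$ is accurate throughout.
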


\begin{prop} \label{ps-eq-20-10} $\rm($\cite{Ciu8}$\rm)$
Let $(A, \wedge, \thicksim, \backsim, 1)$ be a pseudo equality algebra and let $x, y\in A$ such that $x\le y$.  Then the following hold for all $z\in A$:\\
$(1)$ $y\wedge z\thicksim y\le x\wedge z\thicksim x$ and $y\backsim y\wedge z\le x\backsim x\wedge z;$ \\ 
$(2)$ $z\wedge x\thicksim z\le z\wedge y\thicksim z$ and $z\backsim z\wedge x\le z\backsim z\wedge y$.
\end{prop}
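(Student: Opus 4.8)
The plan is to first unfold everything into the implication notation. Under the priority convention together with $x\rightarrow y=(x\wedge y)\thicksim x$ and $x\rightsquigarrow y=x\backsim(x\wedge y)$, we have $(y\wedge z)\thicksim y=y\rightarrow z$, $(x\wedge z)\thicksim x=x\rightarrow z$, $y\backsim(y\wedge z)=y\rightsquigarrow z$ and $x\backsim(x\wedge z)=x\rightsquigarrow z$, so part $(1)$ asserts that $x\le y$ implies $y\rightarrow z\le x\rightarrow z$ and $y\rightsquigarrow z\le x\rightsquigarrow z$, i.e. antitonicity of the implications in the first argument. Likewise $(z\wedge x)\thicksim z=z\rightarrow x$, $(z\wedge y)\thicksim z=z\rightarrow y$, $z\backsim(z\wedge x)=z\rightsquigarrow x$ and $z\backsim(z\wedge y)=z\rightsquigarrow y$, so part $(2)$ asserts that $x\le y$ implies $z\rightarrow x\le z\rightarrow y$ and $z\rightsquigarrow x\le z\rightsquigarrow y$, i.e. isotonicity in the second argument. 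Once this dictionary is in place the two parts reduce to single invocations of earlier results.

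For part $(2)$ I would argue directly from axiom $(A_4)$. Since $x\le y$ and $\wedge$ is isotone, the elements satisfy the chain $z\wedge x\le z\wedge y\le z$. Applying $(A_4)$ to this chain, the consequences of the form $x\thicksim z\le y\thicksim z$ and $z\backsim x\le z\backsim y$ become exactly $(z\wedge x)\thicksim z\le(z\wedge y)\thicksim z$ and $z\backsim(z\wedge x)\le z\backsim(z\wedge y)$, which are the two asserted inequalities. No further computation is needed.

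For part $(1)$ the cleanest route is to specialise Proposition~\ref{ps-eq-05-10}, whose first inequalities read $a\rightarrow b\le(a\wedge c)\rightarrow b$ and $a\rightsquigarrow b\le(a\wedge c)\rightsquigarrow b$ for all $a,b,c$; that is, the first argument may be replaced by a meet without increasing the value. The key observation is that $x\le y$ is equivalent to $y\wedge x=x$, so every element below $y$ is already of the form $y\wedge(\cdot)$. Taking $a=y$, $b=z$ and $c=x$ then yields $y\rightarrow z\le(y\wedge x)\rightarrow z=x\rightarrow z$ and $y\rightsquigarrow z\le(y\wedge x)\rightsquigarrow z=x\rightsquigarrow z$, which are precisely the inequalities of part $(1)$. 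Alternatively, one can derive the same two inequalities from Proposition~\ref{ps-eq-10}: by~$(5)$, $x\le y$ gives $x\rightarrow y=1$ and $x\rightsquigarrow y=1$, and substituting these into the inequalities of~$(9)$ forces $(y\rightarrow z)\rightsquigarrow(x\rightarrow z)=1$ and $(y\rightsquigarrow z)\rightarrow(x\rightsquigarrow z)=1$; a second appeal to~$(5)$ then gives $y\rightarrow z\le x\rightarrow z$ and $y\rightsquigarrow z\le x\rightsquigarrow z$.

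The only real difficulty is bookkeeping rather than mathematics: one must respect the priority convention and the two handednesses of $\thicksim$ and $\backsim$ so that each unfolded expression is paired with the correct implication and with the correct instance of $(A_4)$ (for part $(2)$) or of Proposition~\ref{ps-eq-05-10} / Proposition~\ref{ps-eq-10}$(9)$ (for part $(1)$). Once the translation into implications is carried out, I expect no genuine obstacle, since each inequality follows from a single prior statement.
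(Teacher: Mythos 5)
Your proposal is correct, and both of your suggested routes for part $(1)$ go through. Note that the paper itself does not prove this proposition: it is quoted from \cite{Ciu8} as background material, so there is no internal proof to compare against; your argument therefore serves as a self-contained verification. Your translation into implication notation matches the paper's actual usage of the precedence convention (e.g.\ $y\wedge z\thicksim y=(y\wedge z)\thicksim y=y\rightarrow z$, consistent with how Propositions \ref{ps-eq-05-10} and \ref{ps-eq-20} are phrased), and after that the logic is airtight: part $(2)$ is exactly the instance of axiom $(A_4)$ for the chain $z\wedge x\le z\wedge y\le z$, and part $(1)$ follows from Proposition \ref{ps-eq-05-10} with $a=y$, $b=z$, $c=x$ together with $y\wedge x=x$, or equivalently from Proposition \ref{ps-eq-10}$(5)$,$(9)$ and the fact that $1$ is the top element. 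Both derivations use only the axioms and results stated before Proposition \ref{ps-eq-20-10}, so there is no circularity.
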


\begin{prop} \label{ps-eq-30} $\rm($\cite{Ciu8}$\rm)$
Let $(A, \wedge, \thicksim, \backsim, 1)$ be a pseudo equality algebra. Then the following hold for all $x, y\in A$: \\
$(1)$ $y\thicksim ((x\wedge y\thicksim x)\backsim y)=x\wedge y\thicksim x;$ \\
$(2)$ $(y\thicksim (x\backsim x\wedge y))\backsim y=x\backsim x\wedge y$. 
\end{prop}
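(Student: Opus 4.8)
The plan is to prove each identity by sandwiching, i.e.\ establishing both $\le$ and $\ge$ and invoking antisymmetry of $\le$. For part $(1)$ I abbreviate $t:=x\rightarrow y=(x\wedge y)\thicksim x$ and $w:=t\backsim y=(x\wedge y\thicksim x)\backsim y$, so the claim reads $y\thicksim w=t$; dually, for part $(2)$ I set $s:=x\rightsquigarrow y=x\backsim(x\wedge y)$ and $v:=y\thicksim s$, where the claim is $v\backsim y=s$. The first thing I would record is a pair of order relations: by Proposition~\ref{ps-eq-20}$(3)$ and $(4)$ the \emph{first} conjuncts give $x\le w$ and $y\le w$, while the \emph{second} conjuncts give $x\le v$ and $y\le v$. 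These inequalities are what convert the equality operations into implications later on.

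For part $(1)$, the inequality $t\le y\thicksim w$ is immediate from Proposition~\ref{ps-eq-10}$(2)$: with $t$ in the role of its free variable and $y$ in the role of the second variable, the second conjunct there is exactly $y\thicksim(t\backsim y)=y\thicksim w$, so $t\le y\thicksim w$. The reverse inequality is the decisive one. Here I use $y\le w$ to rewrite the equality as an implication: since $w\wedge y=y$, one has $y\thicksim w=(w\wedge y)\thicksim w=w\rightarrow y$. Then antitonicity of $\rightarrow$ in its first argument finishes the job. Proposition~\ref{ps-eq-05-10}$(1)$ gives $a\rightarrow b\le(a\wedge c)\rightarrow b$ for all $a,b,c$; applying it with $a=w$, $b=y$, $c=x$ and using $w\wedge x=x$ (because $x\le w$) yields $w\rightarrow y\le(w\wedge x)\rightarrow y=x\rightarrow y=t$. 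Hence $y\thicksim w=w\rightarrow y\le t\le y\thicksim w$, so $y\thicksim w=t$.

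Part $(2)$ runs along the dual track. The inequality $s\le v\backsim y$ again follows from Proposition~\ref{ps-eq-10}$(2)$, whose \emph{first} conjunct $(y\thicksim s)\backsim y=v\backsim y$ dominates $s$ (taking $s$ in the role of the free variable). For the reverse inequality I use $y\le v$ to write $v\backsim y=v\backsim(v\wedge y)=v\rightsquigarrow y$, and then antitonicity of $\rightsquigarrow$ in its first argument, recorded in Proposition~\ref{ps-eq-05-10}$(2)$ as $a\rightsquigarrow b\le(a\wedge c)\rightsquigarrow b$; with $a=v$, $b=y$, $c=x$ and $v\wedge x=x$ (since $x\le v$) this gives $v\rightsquigarrow y\le(v\wedge x)\rightsquigarrow y=x\rightsquigarrow y=s$. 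Antisymmetry then yields $v\backsim y=s$.

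The step I expect to be the genuine obstacle is the upper bound (the $\le$ direction). A naive attack directly on $y\thicksim w$ using the monotonicity axioms $(A_4)$--$(A_7)$ or Proposition~\ref{ps-eq-10} tends to keep producing \emph{lower} bounds on $y\thicksim w$ rather than the upper bound one needs. The trick that unlocks it is to first collapse the equality term into an implication via the order relation $y\le w$ (respectively $y\le v$) supplied by Proposition~\ref{ps-eq-20}, and only then apply antitonicity of the implication in its first argument from Proposition~\ref{ps-eq-05-10}; once this reformulation is spotted, the remaining computation is routine.
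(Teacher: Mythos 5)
Your proof is correct: every step checks out, and the argument is non-circular relative to the paper's ordering of material, since it uses only facts stated earlier (Proposition \ref{ps-eq-20}$(3)$,$(4)$ for $x,y\le (x\wedge y\thicksim x)\backsim y$ and $x,y\le y\thicksim (x\backsim x\wedge y)$; Proposition \ref{ps-eq-10}$(2)$ for the lower bounds; Proposition \ref{ps-eq-05-10} for antitonicity). There is, however, nothing in this paper to compare it against: Proposition \ref{ps-eq-30} is quoted from \cite{Ciu8} without proof, so your derivation actually supplies an argument the present paper omits. Your two key moves are sound: the inequality $x\wedge y\thicksim x\le y\thicksim ((x\wedge y\thicksim x)\backsim y)$ is a direct instance of Proposition \ref{ps-eq-10}$(2)$, and for the reverse direction the observation that $y\le w:=(x\wedge y\thicksim x)\backsim y$ lets you collapse $y\thicksim w=(w\wedge y)\thicksim w=w\rightarrow y$, after which Proposition \ref{ps-eq-05-10}$(1)$ with $z:=x$, together with $w\wedge x=x$ (from $x\le w$), gives $w\rightarrow y\le (w\wedge x)\rightarrow y=x\rightarrow y$; antisymmetry of $\le$ (valid since $(A,\wedge,1)$ is a meet-semilattice) finishes, and part $(2)$ is the faithful dual. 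Two minor remarks. First, Proposition \ref{ps-eq-05-10}$(1)$ as printed contains a typo ($x\rightarrow y\le x\wedge z\le x\wedge z\rightarrow y$), but the inequality you invoke is its correctly stated first half $x\wedge y\thicksim x\le x\wedge y\wedge z\thicksim x\wedge z$, which is exactly $x\rightarrow y\le (x\wedge z)\rightarrow y$, so nothing is affected. Second, your closing diagnosis is apt: the only real idea here is to convert the equality term into an implication via the order relations supplied by Proposition \ref{ps-eq-20} before applying antitonicity, and your write-up makes that pivot explicit.
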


\begin{prop} \label{ps-eq-30-10}
Let $(A, \wedge, \thicksim, \backsim, 1)$ be a pseudo equality algebra. Then the following hold for all $x, y\in A$: \\
$(1)$ $x\le (x\thicksim ((x\wedge y\thicksim x)\backsim y))\wedge (((x\wedge y\thicksim x)\backsim y)\backsim x);$ \\
$(2)$ $x\le (x\thicksim (y\thicksim (x\backsim x\wedge y)))\wedge ((y\thicksim (x\backsim x\wedge y))\backsim x);$ \\
$(3)$ $y\le (y\thicksim ((x\wedge y\thicksim x)\backsim y))\wedge (((x\wedge y\thicksim x)\backsim y)\backsim y);$ \\
$(4)$ $y\le (y\thicksim (y\thicksim (x\backsim x\wedge y)))\wedge ((y\thicksim (x\backsim x\wedge y))\backsim y)$. 
\end{prop}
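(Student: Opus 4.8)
The plan is to notice that every one of the four inequalities to be proved has the shape $t\le (t\thicksim w)\wedge(w\backsim t)$ for an appropriate element $w$ built from $x$ and $y$, and that this is precisely the conclusion delivered by Proposition~\ref{ps-eq-20}(2) (equivalently Proposition~\ref{ps-eq-10}(15)) as soon as the order relation $t\le w$ is known. Thus the whole statement reduces to producing four inequalities of the form $t\le w$, all of which are already recorded in Proposition~\ref{ps-eq-20}.

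To make the pairing transparent I would abbreviate $u:=(x\wedge y\thicksim x)\backsim y$ and $v:=y\thicksim(x\backsim x\wedge y)$. With this notation the four right-hand sides become exactly $(x\thicksim u)\wedge(u\backsim x)$, $(x\thicksim v)\wedge(v\backsim x)$, $(y\thicksim u)\wedge(u\backsim y)$ and $(y\thicksim v)\wedge(v\backsim y)$, corresponding to parts (1), (2), (3), (4) respectively. The required order relations then come straight from Proposition~\ref{ps-eq-20}: part (3) of that proposition gives $x\le u\wedge v$ and part (4) gives $y\le u\wedge v$, so splitting each meet into its two components yields at once $x\le u$, $x\le v$, $y\le u$ and $y\le v$.

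It remains only to invoke Proposition~\ref{ps-eq-20}(2) four times: applied to $x\le u$ it yields (1), to $x\le v$ it yields (2), to $y\le u$ it yields (3), and to $y\le v$ it yields (4). There is no substantial obstacle, since the argument is a direct assembly of facts already established in the preceding proposition; the only delicate point is the bookkeeping. The nested $\thicksim$/$\backsim$ expressions are easy to mis-associate, so one must verify that in each target the two conjuncts are genuinely $t\thicksim w$ and $w\backsim t$ with the correct choice of $t\in\{x,y\}$ and $w\in\{u,v\}$ (and, relatedly, that the priority convention is being used consistently, so that $x\wedge y\thicksim x$ and $x\backsim x\wedge y$ are read as $(x\wedge y)\thicksim x$ and $x\backsim(x\wedge y)$). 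Reading these off the statement confirms the matching, and the proof is complete.
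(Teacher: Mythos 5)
Your proof is correct and is essentially identical to the paper's: both extract the order relations $x\le u$, $x\le v$, $y\le u$, $y\le v$ from Proposition~\ref{ps-eq-20}(3),(4) and then apply Proposition~\ref{ps-eq-20}(2) to each one. The only difference is presentational (your explicit $u$, $v$ bookkeeping versus the paper's case-by-case wording).
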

\begin{proof}
$(1)$ By Proposition \ref{ps-eq-20}$(3)$ we have $x\le (x\wedge y\thicksim x)\backsim y$ and applying 
Proposition \ref{ps-eq-20}$(2)$ we get  
$x\le (x\thicksim ((x\wedge y\thicksim x)\backsim y))\wedge (((x\wedge y\thicksim x)\backsim y)\backsim x)$. \\
$(2)$ Similarly as $(1)$, from $x\le y\thicksim (x\backsim x\wedge y)$ and applying Proposition \ref{ps-eq-20}$(2)$. \\
$(3)$ By Proposition \ref{ps-eq-20}$(4)$ we have $y\le (x\wedge y\thicksim x)\backsim y$ and applying 
Proposition \ref{ps-eq-20}$(2)$ we get 
$y\le (y\thicksim ((x\wedge y\thicksim x)\backsim y))\wedge (((x\wedge y\thicksim x)\backsim y)\backsim y)$. \\
$(4)$ Similarly as $(3)$.
\end{proof}

Denote  $x\vee_1 y=(x\wedge y\thicksim x)\backsim y$ and $x\vee_2 y=y\thicksim (x\backsim x\wedge y)$. 

\begin{prop} \label{ps-eq-30-20} In any pseudo equality algebra $A$ the following hold for all $x, y, x_1, x_2\in A$: \\
$(1)$ $1\vee_1 x=x\vee_1 1=1\vee_2 x=x\vee_2 1;$ \\
$(2)$ $x\le y$ implies $x\vee_1 y=x\vee_2 y=y;$ \\
$(3)$ $x\vee_1 x=x\vee_2 x=x;$ \\
$(4)$ $x, y\le x\vee_1 y, x\vee_2 y;$ \\
$(5)$ if $x_1\le x_2$, then $x_1\vee_1 y\le x_2\vee_1 y$ and $x_1\vee_2 y\le x_2\vee_2 y;$ \\ 
$(6)$ $x\wedge y\thicksim x=y\thicksim x\vee_1 y$ and $x\backsim x\wedge y=x\vee_2 y\backsim y$.  
\end{prop}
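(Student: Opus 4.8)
The plan is to exploit that both new operations are expressible through the two implications: writing $x\rightarrow y=(x\wedge y)\thicksim x$ and $x\rightsquigarrow y=x\backsim(x\wedge y)$, one has $x\vee_1 y=(x\rightarrow y)\backsim y$ and $x\vee_2 y=y\thicksim(x\rightsquigarrow y)$. With this observation, parts $(1)$--$(4)$ and $(6)$ reduce to results already in the excerpt. For $(1)$ I would evaluate the four terms directly: using $x\wedge 1=x$ and $1\wedge x=x$ together with $(A_2)$ and $(A_3)$, each of $1\vee_1 x$, $x\vee_1 1$, $1\vee_2 x$, $x\vee_2 1$ collapses to an expression of the form $u\thicksim u$ or $u\backsim u$, hence equals $1$. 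For $(2)$, from $x\le y$ I obtain $x\rightarrow y=1$ and $x\rightsquigarrow y=1$ by Proposition \ref{ps-eq-10}$(5)$, whence $x\vee_1 y=1\backsim y=y$ and $x\vee_2 y=y\thicksim 1=y$ by $(A_3)$; part $(3)$ is the special case $y:=x$. Part $(4)$ is immediate from Proposition \ref{ps-eq-20}$(3)$ and $(4)$, and part $(6)$ is merely Proposition \ref{ps-eq-30}$(1)$ and $(2)$ rewritten in the $\vee_1,\vee_2$ notation.

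The substantial part is the monotonicity $(5)$. For $\vee_1$, assume $x_1\le x_2$. I would first record the antitonicity of the implication in its first argument, $x_2\rightarrow y\le x_1\rightarrow y$, an instance of Proposition \ref{ps-eq-20-10}$(1)$; combined with $y\le x_1\rightarrow y$ and $y\le x_2\rightarrow y$ from Proposition \ref{ps-eq-10}$(7)$, this produces the chain $a\le b\le c$ with $a=y$, $b=x_2\rightarrow y$, $c=x_1\rightarrow y$. Feeding this chain into the clause $c\backsim a\le b\backsim a$ of axiom $(A_4)$ gives exactly $(x_1\rightarrow y)\backsim y\le(x_2\rightarrow y)\backsim y$, that is, $x_1\vee_1 y\le x_2\vee_1 y$. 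The argument for $\vee_2$ is the mirror image: the antitonicity $x_2\rightsquigarrow y\le x_1\rightsquigarrow y$ (again Proposition \ref{ps-eq-20-10}$(1)$) together with $y\le x_1\rightsquigarrow y$ and $y\le x_2\rightsquigarrow y$ (Proposition \ref{ps-eq-10}$(7)$) gives the chain $a\le b\le c$ with $a=y$, $b=x_2\rightsquigarrow y$, $c=x_1\rightsquigarrow y$, and now the clause $a\thicksim c\le a\thicksim b$ of $(A_4)$ yields $y\thicksim(x_1\rightsquigarrow y)\le y\thicksim(x_2\rightsquigarrow y)$, i.e. $x_1\vee_2 y\le x_2\vee_2 y$.

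The one genuinely delicate point is keeping the directions of the inequalities consistent in $(5)$. Both the implications (antitone in their first slot) and the relevant clauses of $(A_4)$ reverse the order, and these two reversals must be arranged so as to compose into the \emph{monotone} conclusion rather than its opposite. The key is to notice that the smaller index $x_1$ produces the \emph{larger} implication $x_1\rightarrow y$, so that $x_1\rightarrow y$ sits at the top of the chain $a\le b\le c$, and then to select the clause of $(A_4)$ that is antitone precisely in the slot being varied. Beyond this bookkeeping every step is a direct substitution into the cited propositions and axioms, so no further estimates are required.
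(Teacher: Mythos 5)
Your proposal is correct and follows essentially the same route as the paper: parts $(1)$--$(4)$ and $(6)$ by direct computation and by citing Propositions \ref{ps-eq-20} and \ref{ps-eq-30}, and part $(5)$ by combining the antitonicity from Proposition \ref{ps-eq-20-10}$(1)$ with the appropriate clauses of axiom $(A_4)$ applied to the chain $y\le x_2\wedge y\thicksim x_2\le x_1\wedge y\thicksim x_1$ (and its $\backsim$ analogue), which is exactly the paper's argument. Your choice of clauses of $(A_4)$ and the directions of all inequalities check out.
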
 
\begin{proof}
$(1)$, $(2)$, $(3)$ are straightforward. \\
$(4)$ It follows by Proposition \ref{ps-eq-20}$(3)$,$(4)$. \\
$(5)$ Applying Proposition \ref{ps-eq-20-10}$(1)$ for $x:=x_1$, $y:=x_2$, $z:=y$ we get 
      $x_2\wedge y\thicksim x_2\le x_1\wedge y\thicksim x_1$. \\
By $(A_4)$ for $y\le x_2\wedge y\thicksim x_2\le x_1\wedge y\thicksim x_1$, it folows that 
$(x_1\wedge y\thicksim x_1)\backsim y\le (x_2\wedge y\thicksim x_2)\backsim y$, that is $x_1\vee_1 y\le x_2\vee_1 y$. 
From Proposition \ref{ps-eq-20-10}$(1)$ for $x:=x_1$, $y:=x_2$, $z:=y$ we get 
      $x_2\backsim x_2\wedge y\le x_1\backsim x_1\wedge y$. 
By $(A_4)$ for $y\le x_2\backsim x_2\wedge y\le x_1\backsim x_1\wedge y$ we get 
$y\thicksim (x_1\backsim x_1\wedge y)\le y\thicksim (x_2\backsim x_2\wedge y)$, that is $x_1\vee_2 y\le x_2\vee_2 y$. \\
$(6)$ It follows by Proposition \ref{ps-eq-30}. 
\end{proof}

Pseudo BCK-algebras were introduced by G. Georgescu and A. Iorgulescu in \cite{Geo15} as algebras 
with "two differences", a left- and right-difference, instead of one $*$ and with a constant element $0$ as the least element. Nowadays pseudo BCK-algebras are used in a dual form, with two implications, $\to$ and $\rightsquigarrow$ and with one constant element $1$, that is the greatest element. Thus such pseudo BCK-algebras are in the "negative cone" and are also called "left-ones". \\
A \emph{pseudo BCK-algebra} (more precisely, \emph{reversed left-pseudo BCK-algebra}) is a structure 
${\mathcal B}=(B,\leq,\rightarrow,\rightsquigarrow,1)$ where $\leq$ is a binary relation on $B$, $\rightarrow$ and $\rightsquigarrow$ are binary operations on $B$ and $1$ is an element of $B$ satisfying, for 
all $x, y, z \in B$, the  axioms:\\
$(B_1)$ $x \rightarrow y \leq (y \rightarrow z) \rightsquigarrow (x \rightarrow z)$, $\:\:\:$ 
            $x \rightsquigarrow y \leq (y \rightsquigarrow z) \rightarrow (x \rightsquigarrow z);$ \\ 
$(B_2)$ $x \leq (x \rightarrow y) \rightsquigarrow y$,$\:\:\:$ $x \leq (x \rightsquigarrow y) \rightarrow y;$ \\
$(B_3)$ $x \leq x;$ \\
$(B_4)$ $x \leq 1;$ \\
$(B_5)$ if $x \leq y$ and $y \leq x$, then $x = y;$ \\
$(B_6)$ $x \leq y$ iff $x \rightarrow y = 1$ iff $x \rightsquigarrow y = 1$. \\

Since the partial order $\leq$ is determined by either of the two ``arrows", we can eliminate $\leq$ from the signature 
and denote a pseudo BCK-algebra by ${\mathcal B}=(X,\rightarrow,\rightsquigarrow,1)$. \\
An equivalent definition of a pseudo BCK-algebra is given in \cite{Kuhr5}. \\
The structure ${\mathcal B}=(B,\rightarrow,\rightsquigarrow,1)$ of the type $(2,2,0)$ is a pseudo BCK-algebra iff it satisfies the following identities and quasi-identity, for all $x, y, z \in B$:\\ 
$(B_1')$ $(x \rightarrow y) \rightsquigarrow [(y \rightarrow z) \rightsquigarrow (x \rightarrow z)]=1;$ \\
$(B_2')$ $(x \rightsquigarrow y) \rightarrow [(y \rightsquigarrow z) \rightarrow (x \rightsquigarrow z)]=1;$ \\
$(B_3')$ $1 \rightarrow x = x;$ \\
$(B_4')$ $1 \rightsquigarrow x = x;$ \\
$(B_5')$ $x \rightarrow 1 = 1;$ \\
$(B_6')$ $(x\rightarrow y =1$ and $y \rightarrow x=1)$ implies $x=y$. \\
The partial order $\le$ is defined by $x \le y$ iff $x \rightarrow y =1$ (iff $x \rightsquigarrow y =1$). \\ 
If the poset $(B, \le)$ is a meet-semilattice, then ${\mathcal B}$ is called a \emph{pseudo BCK-meet-semilattice} and 
we denote it by $\mathcal{B}=(B, \wedge, \rightarrow, \rightsquigarrow, 1)$.       
If $(B,\leq)$ is a lattice, then we will say that ${\mathcal B}$ is a \emph{pseudo BCK-lattice} and 
it is denoted by $\mathcal{B}=(B, \wedge, \vee, \rightarrow, \rightsquigarrow, 1)$. 

\begin{lemma} \label{ps-eq-40} $\rm($\cite{Geo15}, {Ior1}$\rm)$ In any pseudo BCK-algebra 
$(B,\rightarrow,\rightsquigarrow,1)$ the following hold for all $x, y, z\in B$: \\ 
$(1)$ $x\le y$ implies $z\rightarrow x \le z\rightarrow y$ and $z\rightsquigarrow x \le z\rightsquigarrow y;$ \\
$(2)$ $x\le y$ implies $y\rightarrow z \le x\rightarrow z$ and $y\rightsquigarrow z \le x\rightsquigarrow z;$ \\
$(3)$ $x\rightarrow y \le (z\rightarrow x)\rightarrow (z\rightarrow y)$ and 
      $x\rightsquigarrow y \le (z\rightsquigarrow x)\rightsquigarrow (z\rightsquigarrow y);$ \\
$(4)$ $x\rightarrow (y\rightsquigarrow z)=y\rightsquigarrow (x\rightarrow z)$ and 
      $x\rightsquigarrow (y\rightarrow z)=y\rightarrow (x\rightsquigarrow z);$ \\
$(5)$ $x\le y\rightarrow x$ and $x\le y\rightsquigarrow x$.       
\end{lemma}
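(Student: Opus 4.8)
The plan is to derive the whole lemma from the two halves of $(B_1)$ and $(B_2)$, using the order-characterisation $(B_6)$ and antisymmetry $(B_5)$; the single device that makes everything fall into place is a Galois-type adjunction between the two arrows, and recognising and establishing that adjunction is where the real work lies. I would first record the routine identities $x\to x=x\rightsquigarrow x=1$ and $x\to 1=x\rightsquigarrow 1=1$, all immediate from $(B_3)$, $(B_4)$ and $(B_6)$, and note that $\le$ is transitive (if $x\to y=1$ and $y\to z=1$, then $(B_1)$ gives $1\le 1\rightsquigarrow(x\to z)=x\to z$). Part $(2)$ is then the quickest: if $x\le y$ then $x\to y=1$ by $(B_6)$, so the first half of $(B_1)$ gives $1=x\to y\le(y\to z)\rightsquigarrow(x\to z)$, whence $(y\to z)\rightsquigarrow(x\to z)=1$ and $y\to z\le x\to z$ again by $(B_6)$; the $\rightsquigarrow$-version comes from the second half of $(B_1)$.

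The central step is the pair of adjunctions
\[
a\le b\rightsquigarrow c \iff b\le a\to c, \qquad a\le b\to c \iff b\le a\rightsquigarrow c.
\]
For the forward direction of the first one, assume $a\le b\rightsquigarrow c$; then $(B_2)$ gives $b\le(b\rightsquigarrow c)\to c$, and applying the already-proved antitonicity $(2)$ to $a\le b\rightsquigarrow c$ yields $(b\rightsquigarrow c)\to c\le a\to c$, so $b\le a\to c$ by transitivity. The converse and the mirror equivalence are obtained identically, interchanging $\to$ and $\rightsquigarrow$ and using the other halves of $(B_2)$ and of $(2)$. This adjunction is the main obstacle in the sense that it is the non-obvious ingredient; once it is available the remaining parts are short.

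Feeding the first half of $(B_1)$, namely $x\to y\le(y\to z)\rightsquigarrow(x\to z)$, through the adjunction (with $a=x\to y$, $b=y\to z$, $c=x\to z$) turns it into $y\to z\le(x\to y)\to(x\to z)$, which after relabelling is exactly $(3)$; the $\rightsquigarrow$-version follows from the second half of $(B_1)$ and the mirror adjunction. Part $(1)$ is then immediate: if $x\le y$ then $x\to y=1$, so $(3)$ forces $(z\to x)\to(z\to y)=1$, i.e.\ $z\to x\le z\to y$, and likewise for $\rightsquigarrow$.

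For the exchange law $(4)$ I would prove the two inequalities separately and combine them with $(B_5)$. To get $x\to(y\rightsquigarrow z)\le y\rightsquigarrow(x\to z)$, the adjunction reduces the claim to $y\le[x\to(y\rightsquigarrow z)]\to(x\to z)$; here $(B_2)$ gives $y\le(y\rightsquigarrow z)\to z$, while $(3)$ applied with $X=y\rightsquigarrow z$, $Y=z$, $Z=x$ gives $(y\rightsquigarrow z)\to z\le[x\to(y\rightsquigarrow z)]\to(x\to z)$, and chaining the two yields the required bound. The reverse inequality is obtained symmetrically from the mirror adjunction, $(B_2)$ with $x\le(x\to z)\rightsquigarrow z$, and the $\rightsquigarrow$-version of $(3)$; the second identity of $(4)$ follows by interchanging $\to$ and $\rightsquigarrow$ throughout. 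Finally $(5)$ is a one-line consequence of $(4)$: taking $z=x$ gives $x\rightsquigarrow(y\to x)=y\to(x\rightsquigarrow x)=y\to 1=1$ and $x\to(y\rightsquigarrow x)=y\rightsquigarrow(x\to x)=y\rightsquigarrow 1=1$, so $x\le y\to x$ and $x\le y\rightsquigarrow x$ by $(B_6)$.
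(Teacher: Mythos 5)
There is nothing in the paper to compare against here: Lemma \ref{ps-eq-40} is quoted from the literature (Georgescu--Iorgulescu, Iorgulescu) and no proof is given, so your self-contained derivation from $(B_1)$--$(B_6)$ is necessarily a different route, and it is essentially correct. Its organizing idea --- establishing the adjunction $a\le b\rightsquigarrow c$ iff $b\le a\to c$ from $(B_2)$ together with the antitonicity $(2)$, and then deriving $(3)$, $(4)$ and $(5)$ by passing $(B_1)$ and $(B_2)$ through it --- is sound and efficient; note that this adjunction is exactly the exchange property that the paper records, for the derived implications of a pseudo equality algebra, as Proposition \ref{ps-eq-10}$(10)$. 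One step needs repair, though. In your justification of transitivity you write $1\le 1\rightsquigarrow(x\to z)=x\to z$, but the identity $1\rightsquigarrow w=w$ is not an axiom of the system $(B_1)$--$(B_6)$ you are working in (it is axiom $(B_4')$ of the alternative axiomatization $(B_1')$--$(B_6')$ quoted in the paper, which you are not using), and at that point of your argument it has not been proved. The gap is harmless because it can be closed in two ways: either derive the identity first ($w\le(w\to w)\rightsquigarrow w=1\rightsquigarrow w$ by $(B_2)$, $(B_3)$ and $(B_6)$; and $1\le(1\rightsquigarrow w)\to w$ by $(B_2)$, so $(1\rightsquigarrow w)\to w=1$ by $(B_4)$ and $(B_5)$, giving $1\rightsquigarrow w\le w$ by $(B_6)$ and hence equality by $(B_5)$), or avoid it altogether: from $1\le 1\rightsquigarrow(x\to z)$ and $(B_4)$, axiom $(B_5)$ gives $1\rightsquigarrow(x\to z)=1$, hence $1\le x\to z$ by $(B_6)$, hence $x\to z=1$ by $(B_4)$ and $(B_5)$ again. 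The same silent squeeze ($1\le u$ and $u\le 1$ imply $u=1$) is what legitimizes each of your passages from $1\le u$ to $u=1$ in parts $(1)$ and $(2)$; with that made explicit, every remaining step checks out, and no circularity is introduced since the repair uses only $(B_2)$--$(B_6)$.
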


For more details about the properties of a pseudo BCK-algebra we refer te reader to \cite{Ior14} and \cite{Ciu2}. \\
Let $B$ be a pseudo BCK-algebra. The subset $D \subseteq B$ is called a \emph{deductive system} of $B$ if 
it satisfies the following conditions:\\
$(i)$ $1 \in D;$ \\
$(ii)$ for all $x, y \in B$, if $x, x \rightarrow y \in D$, then $y \in D$. \\ 
Condition $(ii)$ is equivalent to the condition: \\
$(ii^{\prime})$ for all $x, y \in B$, if $x, x \rightsquigarrow y \in D$, then $y \in D$. \\ 

A deductive system $D$ of a pseudo BCK-algebra $B$ is said to be \emph{normal} if it satisfies the condition:\\
$(iii)$ for all $x, y \in B$, $x \rightarrow y \in D$ iff $x \rightsquigarrow y \in D$. \\
We will denote by ${\mathcal DS}_{BCK}(B)$ the set of all deductive systems and by ${\mathcal DS}_{n_{BCK}}(B)$ 
the set of all normal deductive systems of a pseudo BCK-algebra $B$. \\
Obviously $\{1\}, B \in {\mathcal DS}_{BCK}(B), {\mathcal DS}_{n_{BCK}}(B)$ and 
${\mathcal DS}_{n_{BCK}}(B)\subseteq {\mathcal DS}_{BCK}(B)$.
For every subset $X\subseteq B$, the smallest deductive system of $B$ containing $X$ (i.e. the intersection of all deductive systems $D\in{\mathcal DS}_{BCK}(B)$ such that $X\subseteq D$) is called the deductive 
system \emph{generated by} $X$ and it will be denoted by $[X)$. If $X=\{x\}$ we write $[x)$ instead of $[\{x\})$. \\
A pseudo BCK-meet-semilattice with the \emph{$\rm($pD$\rm)$ condition} (i.e. with the  \emph{pseudo-distributivity} condition) or a \emph{pseudo BCK$\rm($pD$\rm)$-meet-semilattice} for short, is a pseudo BCK-meet-semilattice 
$(X, \wedge, \rightarrow, \rightsquigarrow, 1)$ satisfying the (pD) condition:\\
(pD) $\hspace*{2.1cm}$   $x\rightarrow (y\wedge z)=(x\rightarrow y)\wedge (x\rightarrow z)$, \\
     $\hspace*{3cm}$ $x\rightsquigarrow (y\wedge z)=(x\rightsquigarrow y)\wedge (x\rightsquigarrow z)$ \\ 
for all $x, y, z\in X$. \\ 
A pseudo BCK-meet-semilattice with the \emph{$\rm($pC$\rm)$ condition} (i.e. with the  \emph{pseudo-contraction} condition) or a \emph{pseudo BCK$\rm($pC$\rm)$-meet-semilattice} for short, is a pseudo BCK-meet-semilattice 
$(X, \wedge, \rightarrow, \rightsquigarrow, 1)$ satisfying the (pC) condition:\\
(pC) $\hspace*{2.1cm}$   $x\rightarrow y\le (x\wedge z)\rightarrow (y\wedge z)$, \\
     $\hspace*{3cm}$ $x\rightsquigarrow y\le (x\wedge z)\rightsquigarrow (y\wedge z)$ \\ 
for all $x, y, z\in X$. 

\begin{prop} \label{ps-eq-50} $\rm($\cite{Ciu8}$\rm)$
Any pseudo BCK(pD)-meet-semilattice is a pseudo BCK(pC)-meet-semilattice. 
\end{prop}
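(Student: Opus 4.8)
The plan is to obtain each (pC) inequality directly from the corresponding (pD) equality, the key observation being that one of the two meet-factors produced by (pD) collapses to $1$ because $x\wedge z\le z$. So rather than estimating, I would first turn the complicated side $(x\wedge z)\rightarrow(y\wedge z)$ into a simpler expression using pseudo-distributivity, and only then compare.

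For the first inequality I would start from the right-hand side and rewrite it using (pD) applied to the arguments $y$ and $z$: this gives $(x\wedge z)\rightarrow (y\wedge z)=((x\wedge z)\rightarrow y)\wedge ((x\wedge z)\rightarrow z)$. Since $x\wedge z\le z$, axiom $(B_6)$ forces $(x\wedge z)\rightarrow z=1$, so this simplifies to $(x\wedge z)\rightarrow (y\wedge z)=(x\wedge z)\rightarrow y$. It then remains to compare $x\rightarrow y$ with $(x\wedge z)\rightarrow y$; since $x\wedge z\le x$, the antitonicity of $\rightarrow$ in its first argument, Lemma \ref{ps-eq-40}$(2)$, yields $x\rightarrow y\le (x\wedge z)\rightarrow y=(x\wedge z)\rightarrow (y\wedge z)$, which is exactly the first (pC) inequality.

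The second inequality is proved in the same way, replacing $\rightarrow$ by $\rightsquigarrow$ throughout: (pD) gives $(x\wedge z)\rightsquigarrow (y\wedge z)=((x\wedge z)\rightsquigarrow y)\wedge ((x\wedge z)\rightsquigarrow z)$, the factor $(x\wedge z)\rightsquigarrow z$ equals $1$ by $(B_6)$ since $x\wedge z\le z$, and antitonicity of $\rightsquigarrow$ in the first argument, again Lemma \ref{ps-eq-40}$(2)$, gives $x\rightsquigarrow y\le (x\wedge z)\rightsquigarrow y=(x\wedge z)\rightsquigarrow (y\wedge z)$. I do not expect any genuine obstacle here; the only points requiring care are using (pD) as an equality (not merely an inequality) so that it can rewrite the compound implication on the larger argument $y\wedge z$, and correctly invoking the order characterization $(B_6)$ to discard the factor involving $z$.
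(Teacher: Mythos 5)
Your proof is correct, and every step is justified by facts available in the paper: the rewriting $(x\wedge z)\rightarrow (y\wedge z)=((x\wedge z)\rightarrow y)\wedge((x\wedge z)\rightarrow z)$ is a direct instance of (pD), the factor $(x\wedge z)\rightarrow z=1$ follows from $(B_6)$ together with $x\wedge z\le z$, and the final comparison uses the antitonicity in Lemma \ref{ps-eq-40}$(2)$; the $\rightsquigarrow$ case is symmetric. Note that the paper itself gives no proof of this proposition (it is quoted from \cite{Ciu8}), but your argument is exactly the natural one-line derivation this result admits, so there is nothing to add or correct.
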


If $(A, \wedge, \rightarrow, \rightsquigarrow, 1)$ is a 
pseudo BCK(pC)-meet-semilattice, then $x\rightarrow x\wedge y=x\rightarrow y$ and 
$x\rightsquigarrow x\wedge y=x\rightsquigarrow y$ (\cite{Dvu7}). \\
The following theorem provides a connection of pseudo equality algebras with the class of 
pseudo BCK(pC)-meet-semilattices. 
 
\begin{theo} \label{ps-eq-110} $\rm($\cite{Dvu7}$\rm)$ The following statements hold: \\
$(1)$  Let $\mathcal{A}=(A, \wedge, \thicksim, \backsim, 1)$ be a pseudo equality algebra.  
Then $\Psi(\mathcal{A})=(A, \wedge, \rightarrow, \rightsquigarrow, 1)$ is a pseudo BCK(pC)-meet-semilattice, where $x\rightarrow y=x\wedge y\thicksim x$ and $x\rightsquigarrow y=x\backsim x\wedge y$ for all $x, y\in A;$ \\ 
$(2)$ Let $\mathcal{B}=(B, \wedge, \rightarrow, \rightsquigarrow, 1)$ be a pseudo BCK(pC)-meet-semilattice. 
Then $\Phi(\mathcal{B})=(B, \wedge, \thicksim, \backsim, 1)$ is a pseudo equality algebra, where 
$x\thicksim y=y\rightarrow x$ and $x\backsim y=x\rightsquigarrow y$ for all $x, y\in B$.             
\end{theo}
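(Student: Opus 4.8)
The plan is to prove each direction by unwinding the relevant definitions and then matching every required axiom against a property already recorded in the preliminaries; the correspondence is almost entirely bookkeeping, with a single genuinely load-bearing step.

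For Part $(1)$ I would observe that $x \rightarrow y = (x \wedge y) \thicksim x$ and $x \rightsquigarrow y = x \backsim (x \wedge y)$ are exactly the two implications already attached to the pseudo equality algebra, so every pseudo BCK requirement is on hand. The meet-semilattice structure with top element $1$ is axiom $(A_1)$; the order axioms $(B_3)$, $(B_4)$, $(B_5)$ are reflexivity, $1$-boundedness and antisymmetry of the order determined by $\wedge$. Then $(B_2)$ is Proposition \ref{ps-eq-10}$(8)$, $(B_1)$ is Proposition \ref{ps-eq-10}$(9)$, and $(B_6)$ is Proposition \ref{ps-eq-10}$(5)$. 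Finally the pseudo-contraction condition (pC) is Proposition \ref{ps-eq-10}$(12)$. Assembling these references shows that $\Psi(\mathcal{A})$ is a pseudo BCK(pC)-meet-semilattice.

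For Part $(2)$ I would set $x \thicksim y = y \rightarrow x$ and $x \backsim y = x \rightsquigarrow y$ and check $(A_1)$--$(A_7)$ after rewriting each statement in the arrow language, keeping in mind that passing to $\thicksim$ reverses the two arguments. Axiom $(A_1)$ is the hypothesis that $\mathcal{B}$ is a meet-semilattice, the two orders coinciding since both are the $\wedge$-order; $(A_2)$ and $(A_3)$ reduce to $x \rightarrow x = x \rightsquigarrow x = 1$ and $1 \rightarrow x = 1 \rightsquigarrow x = x$. The four clauses of $(A_4)$ become isotonicity and antitonicity of the arrows, namely Lemma \ref{ps-eq-40}$(1)$,$(2)$; axiom $(A_6)$ becomes the exchange law $(B_1)$ of $\mathcal{B}$; and $(A_7)$ becomes Lemma \ref{ps-eq-40}$(3)$ --- in each case after the argument swap forced by the definition of $\thicksim$ and $\backsim$.

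The step carrying the real content, and the one I expect to be the crux, is axiom $(A_5)$: under the substitution its first clause reads $y \rightarrow x \le (y \wedge z) \rightarrow (x \wedge z)$ and its second reads $x \rightsquigarrow y \le (x \wedge z) \rightsquigarrow (y \wedge z)$, which is precisely the (pC) condition. This is the point at which a general pseudo BCK-meet-semilattice would not suffice and the pseudo-contraction hypothesis becomes indispensable; it is exactly the place where the correspondence can (and historically did) break down, so I would invoke (pC) explicitly here rather than try to derive $(A_5)$ from the bare pseudo BCK axioms.
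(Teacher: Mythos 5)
Your proposal is correct, but there is nothing in the paper to compare it against: Theorem \ref{ps-eq-110} is stated with the citation to \cite{Dvu7} and no proof is reproduced here, so your argument stands as a self-contained verification built from the paper's preliminaries. As such a verification it is sound. In Part $(1)$, the matchings are exactly right: $(B_2)$ is Proposition \ref{ps-eq-10}$(8)$, $(B_1)$ is Proposition \ref{ps-eq-10}$(9)$, $(B_6)$ is Proposition \ref{ps-eq-10}$(5)$ (which also guarantees that the BCK-order agrees with the semilattice order, so the meet-semilattice requirement is inherited from $(A_1)$), and (pC) is Proposition \ref{ps-eq-10}$(12)$. In Part $(2)$, your translations are also correct once the argument swap in $x\thicksim y=y\rightarrow x$ is tracked: $(A_4)$ comes from Lemma \ref{ps-eq-40}$(1)$,$(2)$, $(A_6)$ from $(B_1)$, $(A_7)$ from Lemma \ref{ps-eq-40}$(3)$, and $(A_5)$ rewrites precisely to the (pC) inequalities. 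Your emphasis on $(A_5)$ as the genuinely load-bearing step is well placed: this is exactly the point where a bare pseudo BCK-meet-semilattice does not suffice, and it is the locus of the gap in the original Jenei--K\'or\'odi correspondence that \cite{Ciu1} and \cite{Dvu7} addressed. The only caveat worth recording is that Proposition \ref{ps-eq-10} and Lemma \ref{ps-eq-40} are themselves imported from \cite{Dvu7} and \cite{Geo15}; your proof is therefore a reduction of the theorem to those facts rather than a from-scratch derivation, which is legitimate within this paper's logical organization (those propositions are stated as standalone properties of the algebras, proved independently of the theorem) but means the real content of the equivalence still lives in the cited computations.
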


With the notations of Theorem \ref{ps-eq-110} we say that a pseudo equality algebra 
$(A, \wedge, \thicksim, \backsim, 1)$ is \emph{invariant} if there exists a pseudo BCK-meet-semilattice 
$(A, \wedge, \rightarrow^{\prime}, \rightsquigarrow^{\prime}, 1)$ such that 
$\Phi((A, \wedge, \rightarrow^{\prime}, \rightsquigarrow^{\prime}, 1))=(A, \wedge, \thicksim, \backsim, 1)$. 

\begin{theo} \label{ps-eq-140} $\rm($\cite{Dvu7}$\rm)$ The following statements hold: \\
$(1)$ Let $(A, \wedge, \thicksim, \backsim, 1)$ be a pseudo equality algebra. 
Then $\Psi(\Phi(\Psi((A, \wedge, \rightarrow, \rightsquigarrow, 1))))=
\Psi((A, \wedge, \rightarrow, \rightsquigarrow, 1));$ \\
$(2)$ Let $(B, \wedge, \rightarrow, \rightsquigarrow, 1)$ be a pseudo BCK(pC)-meet-semilattice. Then 
$\Psi(\Phi((B, \wedge, \rightarrow, \rightsquigarrow, 1)))=(B, \wedge, \rightarrow, \rightsquigarrow, 1);$ \\
$(3)$ A pseudo equality algebra $(A, \wedge, \thicksim, \backsim, 1)$ is invariant if and only if 
$\Phi(\Psi((A, \wedge, \thicksim, \backsim, 1)))=(A, \wedge, \thicksim, \backsim, 1);$ \\
$(4)$ The class of pseudo BCK(pC)-meet-semilattices and the class of invariant pseudo equality  
algebras are term equivalent; \\
$(5)$ The category of pseudo BCK(pC)-meet-semilattices and the category of invariant pseudo equality  
algebras are categorically equivalent.
\end{theo}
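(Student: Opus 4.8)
The plan is to prove (2) first, since (1) is an immediate specialization of it and (3)--(5) follow formally from (1)--(3) together with the fact that both correspondences are given by terms.

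First I would verify (2) by direct computation. Starting from a pseudo BCK(pC)-meet-semilattice $\mathcal{B}=(B,\wedge,\rightarrow,\rightsquigarrow,1)$, the algebra $\Phi(\mathcal{B})=(B,\wedge,\thicksim,\backsim,1)$ has $x\thicksim y=y\rightarrow x$ and $x\backsim y=x\rightsquigarrow y$, so $\Psi(\Phi(\mathcal{B}))=(B,\wedge,\rightarrow^{\prime},\rightsquigarrow^{\prime},1)$ with $x\rightarrow^{\prime}y=(x\wedge y)\thicksim x=x\rightarrow(x\wedge y)$ and $x\rightsquigarrow^{\prime}y=x\backsim(x\wedge y)=x\rightsquigarrow(x\wedge y)$. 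The decisive step is the (pC)-identity recalled just before the theorem, namely $x\rightarrow(x\wedge y)=x\rightarrow y$ and $x\rightsquigarrow(x\wedge y)=x\rightsquigarrow y$ in any pseudo BCK(pC)-meet-semilattice; this gives $\rightarrow^{\prime}=\rightarrow$ and $\rightsquigarrow^{\prime}=\rightsquigarrow$, hence $\Psi(\Phi(\mathcal{B}))=\mathcal{B}$. Part (1) then follows at once: for a pseudo equality algebra $A$, the structure $\Psi(A)=(A,\wedge,\rightarrow,\rightsquigarrow,1)$ is a pseudo BCK(pC)-meet-semilattice by Theorem \ref{ps-eq-110}(1), so applying (2) with $\mathcal{B}:=\Psi(A)$ yields $\Psi(\Phi(\Psi(A)))=\Psi(A)$.

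For (3) I would argue both implications. The direction $(\Leftarrow)$ is immediate: if $\Phi(\Psi(A))=A$, then $\Psi(A)$ is a pseudo BCK(pC)-meet-semilattice (Theorem \ref{ps-eq-110}(1)) witnessing invariance. For $(\Rightarrow)$, suppose $A$ is invariant, so there is a pseudo BCK(pC)-meet-semilattice $\mathcal{B}$ with $\Phi(\mathcal{B})=A$; applying $\Psi$ and using (2) gives $\mathcal{B}=\Psi(\Phi(\mathcal{B}))=\Psi(A)$, whence $\Phi(\Psi(A))=\Phi(\mathcal{B})=A$. The only subtle point here is that the semilattice witnessing invariance must carry the (pC) condition for (2) to apply; this is guaranteed because $\Phi$ is defined precisely on (pC)-semilattices (Theorem \ref{ps-eq-110}(2)).

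Finally, (4) and (5) are formal. Both passages use the same underlying set, share $\wedge$ and $1$, and translate the remaining operations by the terms $x\rightarrow y=(x\wedge y)\thicksim x$, $x\rightsquigarrow y=x\backsim(x\wedge y)$ and $x\thicksim y=y\rightarrow x$, $x\backsim y=x\rightsquigarrow y$; parts (2) and (3) say exactly that these term translations are mutually inverse on the two classes, which is term equivalence, giving (4). For (5), since the operations of each structure are term-definable from those of the other, any homomorphism for one structure is automatically a homomorphism for the other on the same underlying map, so $\Psi$ and $\Phi$ act as the identity on morphisms and are functors; by (2) and (3) they are mutually inverse on objects and identity on morphisms, so they furnish an equivalence (in fact an isomorphism) of categories. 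I expect the main obstacle to be the $(\Rightarrow)$ direction of (3) --- specifically, ensuring the invariance-witnessing semilattice satisfies (pC) so that (2) is applicable; every other step reduces to the single (pC)-identity $x\rightarrow(x\wedge y)=x\rightarrow y$ and to the automatic preservation of term-definable operations by homomorphisms.
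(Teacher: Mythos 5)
The paper itself offers no proof of this theorem: it is quoted directly from \cite{Dvu7}, so there is no in-paper argument to compare yours against. Judged on its own merits, your reconstruction is structurally sound and the core steps check out: (2) is exactly the one-line computation via the identity $x\rightarrow (x\wedge y)=x\rightarrow y$, $x\rightsquigarrow (x\wedge y)=x\rightsquigarrow y$ recalled just before Theorem \ref{ps-eq-110}; (1) follows by specializing (2) to $\mathcal{B}:=\Psi(A)$; and (4), (5) are the formal consequences you describe, since homomorphisms automatically preserve term-defined operations, so $\Psi$ and $\Phi$ act as the identity on underlying maps.

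The one place where your justification does not hold up as stated is the $(\Rightarrow)$ direction of (3). The paper's definition of invariance only asks for a pseudo BCK-\emph{meet-semilattice} witness $(A,\wedge,\rightarrow',\rightsquigarrow',1)$ whose $\Phi$-image is $A$; the (pC) condition is not part of that definition, and $\Phi$ there is merely the term translation $x\thicksim y=y\rightarrow' x$, $x\backsim y=x\rightsquigarrow' y$, which makes sense on any pseudo BCK-meet-semilattice. So your claim that the witness ``must carry (pC) because $\Phi$ is defined precisely on (pC)-semilattices'' is circular under this reading: it assumes a more restrictive definition than the paper gives. The gap is genuine but easily closed: if $\Phi(\mathcal{B})=A$ is a pseudo equality algebra, then axiom $(A_5)$ for $A$, namely $x\thicksim y\le (x\wedge z)\thicksim (y\wedge z)$ and $x\backsim y\le (x\wedge z)\backsim (y\wedge z)$, translates back through the witness into $y\rightarrow' x\le (y\wedge z)\rightarrow' (x\wedge z)$ and $x\rightsquigarrow' y\le (x\wedge z)\rightsquigarrow' (y\wedge z)$, which is exactly the (pC) condition for $\mathcal{B}$. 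With that one observation inserted, your application of (2) to $\mathcal{B}$ is legitimate and the remainder of your argument for (3), and hence for (4) and (5), goes through.
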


\begin{prop} \label{ps-eq-150} $\rm($\cite{Ciu8}$\rm)$ A pseudo equality algebra $(A, \wedge, \thicksim, \backsim, 1)$ is invariant if and only if $x\wedge y\thicksim y=x\thicksim y$ and $x\backsim x\wedge y=x\backsim y$, 
for all $x, y\in A$.
\end{prop}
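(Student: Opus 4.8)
The plan is to reduce the statement to the characterization of invariance already isolated in Theorem~\ref{ps-eq-140}$(3)$, which says that $\mathcal{A}=(A, \wedge, \thicksim, \backsim, 1)$ is invariant if and only if $\Phi(\Psi(\mathcal{A}))=\mathcal{A}$. Hence the whole matter reduces to computing the composite $\Phi\circ\Psi$ explicitly on $\mathcal{A}$ and recording exactly when it returns the original algebra. Both $\Psi$ and $\Phi$ keep the underlying meet-semilattice $(A,\wedge)$ and the top element $1$ unchanged, so the only data that $\Phi\circ\Psi$ can alter are the two equality operations; consequently the identity $\Phi(\Psi(\mathcal{A}))=\mathcal{A}$ is equivalent to the coincidence of those operations with $\thicksim$ and $\backsim$.

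First I would unfold $\Psi(\mathcal{A})=(A, \wedge, \rightarrow, \rightsquigarrow, 1)$ by Theorem~\ref{ps-eq-110}$(1)$, so that $x\rightarrow y=(x\wedge y)\thicksim x$ and $x\rightsquigarrow y=x\backsim(x\wedge y)$. Applying $\Phi$ to this pseudo BCK(pC)-meet-semilattice via Theorem~\ref{ps-eq-110}$(2)$, the resulting equality operations $\thicksim'$ and $\backsim'$ satisfy $x\thicksim' y=y\rightarrow x$ and $x\backsim' y=x\rightsquigarrow y$. Substituting the formulas for $\rightarrow$ and $\rightsquigarrow$ and using commutativity of $\wedge$ gives $x\thicksim' y=(y\wedge x)\thicksim y=(x\wedge y)\thicksim y$ and $x\backsim' y=x\backsim(x\wedge y)$.

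Comparing with the original operations, $\Phi(\Psi(\mathcal{A}))=\mathcal{A}$ holds precisely when $\thicksim'=\thicksim$ and $\backsim'=\backsim$, that is, when $(x\wedge y)\thicksim y=x\thicksim y$ and $x\backsim(x\wedge y)=x\backsim y$ for all $x, y\in A$. By Theorem~\ref{ps-eq-140}$(3)$ this equality is exactly the invariance of $\mathcal{A}$, so the stated equivalence follows at once.

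Since the argument is purely a matter of unravelling the two term-operations, there is no genuine mathematical obstacle here. The one point that deserves attention is the argument swap concealed in the definition $x\thicksim' y=y\rightarrow x$ of $\Phi$, together with the accompanying use of commutativity of the meet to rewrite $y\wedge x$ as $x\wedge y$; getting this order right is precisely what makes the two composite identities match the target conditions $x\wedge y\thicksim y=x\thicksim y$ and $x\backsim x\wedge y=x\backsim y$ verbatim.
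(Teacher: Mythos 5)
Your proposal is correct: reducing invariance to $\Phi(\Psi(\mathcal{A}))=\mathcal{A}$ via Theorem~\ref{ps-eq-140}$(3)$ and then unfolding the two composite equality operations (with the argument swap $x\thicksim' y=y\rightarrow x$ handled correctly) gives exactly the stated identities. Note that the paper itself states this proposition without proof, citing \cite{Ciu8}; your argument is the natural one that such a proof would consist of, so there is nothing to fault or to contrast here.
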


In what follows we recall some notions and results regarding the deductive systems and congruences on a 
pseudo equality algebra (see \cite{Dvu7}). \\
Let $(A, \wedge, \thicksim, \backsim, 1)$ be a pseudo equality algebra. A subset $D\subseteq A$ is called a \emph{deductive system} of $A$ if for all $x, y\in A$: \\
$(DS_1)$ $1 \in D;$ \\
$(DS_2)$ if $x\in D$, $x\le y$, then $y\in D$ (that is $D$ is an \emph{upset}); \\
$(DS_3)$ if $x, y \thicksim x \in D$, then $y\in D$. \\
A subset $D\subseteq A$ is a deductive system of $A$ if, for all $x, y\in A$, it satisfies conditions $(DS_1)$, $(DS_2)$ and the condition:\\
$(DS_3^{\prime})$ if $x, x \backsim y \in D$, then $y\in D$. \\
A deductive system $D$ of a pseudo equality algebra $A$ is \emph{proper} if $D\ne A$. 
A proper deductive system is called \emph{maximal} if it is not strictly contained in any other proper 
deductive system of $A$. 
We will denote by ${\mathcal DS}(A)$ the set of all deductive systems of $A$. 

\begin{lemma} \label{ps-eq-160} The following hold: \\
$(1)$ $D\in {\mathcal DS}(A)$ if and only if it satisfies conditions $(DS_1)$, $(DS_2)$ and the condition: \\
$(DS_4)$ $x, x\wedge y \thicksim x \in D$, then $y \in D$, for all $x, y\in A$. \\
$(2)$ $D\in {\mathcal DS}(A)$ if and only if it satisfies conditions $(DS_1)$, $(DS_2)$ and the condition: \\
$(DS^{'}_4)$ $x, x\backsim x\wedge y\in D$, then $y \in D$, for all $x, y\in A$.
\end{lemma}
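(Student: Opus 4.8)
The plan is to read off that conditions $(DS_4)$ and $(DS_4')$ are exactly the modus-ponens rules for the two induced implications: since $(x\wedge y)\thicksim x=x\rightarrow y$ and $x\backsim (x\wedge y)=x\rightsquigarrow y$, condition $(DS_4)$ asserts ``$x,\,x\rightarrow y\in D$ imply $y\in D$'' and $(DS_4')$ asserts ``$x,\,x\rightsquigarrow y\in D$ imply $y\in D$''. Both equivalences will be handled by the same two-step pattern, using only the defining conditions of a deductive system together with a single comparison inequality from Proposition \ref{ps-eq-10}$(1)$; I treat part $(1)$ in detail and leave part $(2)$ to the evident $\backsim$-dual.

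For the forward implication of $(1)$ I would assume $D\in{\mathcal DS}(A)$, so that $(DS_1)$, $(DS_2)$, $(DS_3)$ hold, and verify $(DS_4)$. Given $x\in D$ and $(x\wedge y)\thicksim x\in D$, I apply $(DS_3)$ with $x\wedge y$ in the role of its variable $y$: from $x\in D$ and $(x\wedge y)\thicksim x\in D$ it yields $x\wedge y\in D$. Since $x\wedge y\le y$ (immediate from the meet-semilattice structure), the upset condition $(DS_2)$ gives $y\in D$, which is $(DS_4)$.

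For the converse I would assume $(DS_1)$, $(DS_2)$, $(DS_4)$ and recover $(DS_3)$. Suppose $x\in D$ and $y\thicksim x\in D$. The key ingredient is the inequality $y\thicksim x\le (x\wedge y)\thicksim x$, which follows from Proposition \ref{ps-eq-10}$(1)$ (the instance $y\thicksim x\le x\rightarrow y$, obtained by swapping the two variables) together with the definition $x\rightarrow y=(x\wedge y)\thicksim x$. Because $D$ is an upset, from $y\thicksim x\in D$ we obtain $(x\wedge y)\thicksim x\in D$, and then $(DS_4)$ applied to $x$ and $(x\wedge y)\thicksim x$ yields $y\in D$, i.e.\ $(DS_3)$.

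Part $(2)$ proceeds identically with $\backsim$ in place of $\thicksim$ and $(DS_3')$ in place of $(DS_3)$: the forward direction applies $(DS_3')$ to $x$ and $x\backsim(x\wedge y)$ to extract $x\wedge y\in D$ and then uses the upset property, while the converse uses the comparison $x\backsim y\le x\backsim(x\wedge y)$, again furnished by Proposition \ref{ps-eq-10}$(1)$ via $x\backsim y\le x\rightsquigarrow y$ and $x\rightsquigarrow y=x\backsim(x\wedge y)$. The only genuinely non-routine point is recognizing which preliminary inequality bridges the ``full'' equality term $y\thicksim x$ (resp.\ $x\backsim y$) to its meet-truncated counterpart $(x\wedge y)\thicksim x$ (resp.\ $x\backsim(x\wedge y)$) and checking its orientation; once Proposition \ref{ps-eq-10}$(1)$ is identified as the correct tool, every remaining step is forced.
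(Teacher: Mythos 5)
Your proof is correct and follows essentially the same route as the paper: forward direction via the semilattice inequality $x\wedge y\le y$ and the upset property, converse via the comparison $y\thicksim x\le (x\wedge y)\thicksim x$ (resp.\ $x\backsim y\le x\backsim (x\wedge y)$) followed by $(DS_4)$ (resp.\ $(DS_4')$). The only cosmetic difference is the citation: the paper invokes Proposition \ref{ps-eq-20}$(5)$ for the bridging inequality, while you invoke Proposition \ref{ps-eq-10}$(1)$, which is the same statement once the definitions $x\rightarrow y=(x\wedge y)\thicksim x$ and $x\rightsquigarrow y=x\backsim(x\wedge y)$ are unfolded.
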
 
\begin{proof}
$(1)$ Assume that $D$ satisfies $(DS_1)$, $(DS_2)$, $(DS_3)$ and let $x, y\in A$ such that 
$x, x\wedge y \thicksim x \in D$. According to $(DS_3)$, it follows that $x\wedge y\in D$. Since $x\wedge y \le y$, 
by $(DS_2)$ we get $y\in D$, hence $(DS_4)$ is satisfied. 
Conversely, suppose that $D$ satisfies $(DS_1)$, $(DS_2)$, $(DS_4)$ and let $x, y\in A$ such that 
$x, y \thicksim x \in D$. By Proposition \ref{ps-eq-20}$(5)$, $y\thicksim x\le x\wedge y\thicksim x$, so by $(DS_2)$ 
we have $x\wedge y\thicksim x\in D$. Finally by $(DS_4)$, $y\in D$, thus $(DS_3)$ is satisfied. \\
$(2)$ Similarly as $(1)$.
\end{proof}

Clearly, $\{1\}, A \subseteq {\mathcal DS}(A)$ and ${\mathcal DS}(A)$ is closed under arbitrary intersections. 
As a consequence, $({\mathcal DS}(A), \subseteq)$ is a complete lattice. \\ 
Every deductive system of an invariant pseudo equality algebra $A$ is a subalgebra of $A$ (\cite{Ciu8}). \\
The set of deductive systems of an invariant  pseudo equality algebra coincides with the set of deductive systems of its corresponding pseudo BCK(pC)-meet-semilattice. \\
A deductive system $D$ of $A$ is called \emph{closed} if $x\thicksim y, x\backsim y\in D$ for all $x, y\in D$. 
According to \cite[Prop. 4.5]{Dvu7}, a deductive system $D$ of a pseudo equality algebra $A$ is closed 
if and only if $1\thicksim x, x\backsim 1\in D$ for all $x\in D$. \\
A deductive system $D$ of a pseudo equality algebra $A$ is called \emph{normal} if it satisfies the condition:\\
$(DS_5)$ $x \thicksim y, y\thicksim x \in D$ iff $y \backsim x, x\backsim y \in D$, for all $x, y\in A$. 
We will denote by ${\mathcal DS}_n(A)$ the set of all normal deductive systems of $A$. 
Obviously $\{1\}, A \in {\mathcal DS}_n(A)$ and ${\mathcal DS}_n(A)\subseteq {\mathcal DS}(A)$. \\
A subset $\Theta \subseteq A \times A$ is called a \emph{congruence} of $A$ if it is an 
equivalence relation on $A$ and for all $x_1, y_1, x_2, y_2\in A$ such that $(x_1, y_1), (x_2, y_2) \in \Theta$ 
the following hold: \\
$(CG_1)$ $(x_1 \wedge x_2, y_1 \wedge y_2) \in \Theta;$ \\
$(CG_2)$ $(x_1 \thicksim x_2, y_1 \thicksim y_2) \in \Theta;$ \\
$(CG_2)$ $(x_1 \backsim x_2, y_1 \backsim y_2) \in \Theta$. \\
We will denote by ${\mathcal Con}(A)$ the set of all congruences of $A$. \\
With any $H\in {\mathcal DS}_n(A)$ we associate a binary relation $\Theta_H$ by defining $x\Theta_H y$ iff 
$x \thicksim y \in H$ iff $x \backsim y \in H$. \\
If $\Theta$ is congruence relation on a pseudo 
equality algebra $(A, \wedge, \thicksim, \backsim, 1)$, then 
$F_{\Theta}=[1]_{\Theta}=\{x\in A\mid (x, 1)\in \Theta \}$ is a closed normal deductive system of $A$. \\
If $H\in {\mathcal DS}_n(A)$, then  
$H_{\Theta}=\{(x, y)\in A \times A \mid x\thicksim y, y\thicksim x \in H\}=
\{(x, y)\in A \times A \mid x\backsim y, y\backsim x \in H\} \in {\mathcal Con}(A)$.

Let $(A, \wedge, \thicksim, \backsim, 1)$ be an invariant pseudo equality algebra. 
Then there is a one-to-one correspondence between the set of all normal deductive systems of $A$ and 
${\mathcal Con}(A)$. 

Let $(A, \wedge, \thicksim, \backsim, 1)$ be a pseudo equality algebra and $H\in {\mathcal DS}_n(A)$. \\
Denote $A/{\Theta_H}=\{x/{\Theta_H}\mid x\in A\}$, where $x/{\Theta_H}=\{y\in A\mid (x,y)\in \Theta_H\}$. 
We define the following operations on $A/{\Theta_H}$: 
$x/{\Theta_H} \wedge y/{\Theta_H}=(x\wedge y)/{\Theta_H}$, 
$x/{\Theta_H} \thicksim y/{\Theta_H}=(x\thicksim y)/{\Theta_H}$, 
$x/{\Theta_H} \backsim y/{\Theta_H}=(x\backsim y)/{\Theta_H}$. \\
If $H\in {\mathcal DS}_n(A)$, then $(A/{\Theta_H},\wedge,\thicksim,\backsim,1/{\Theta_H})$ is a pseudo equality algebra. \\ 
A pseudo equality algebra $(A, \wedge, \thicksim, \backsim, 1)$ is called \emph{simple} if 
${\mathcal DS}(A)=\{\{1\}, A\}$.

$\vspace*{5mm}$

\section{Commutative pseudo equality algebras}

In this section we define and study the commutative pseudo equality algebras. 
We give a characterization of commutative pseudo equality algebras and we prove that an invariant 
pseudo equality algebra is commutative if and only if its corresponding pseudo BCK(pC)-meet-semilattice 
is commutative. Other results consist of proving that every commutative pseudo equality algebra is a distributive lattice and every finite invariant commutative pseudo equality algebra is a symmetric pseudo equality algebra.

\begin{Def} \label{com-eq-10} A pseudo equality algebra $(A, \wedge, \thicksim, \backsim, 1)$ is said to be \emph{commutative} if the following hold: \\
$\hspace*{3cm}$ $(x\wedge y\thicksim x)\backsim y=(x\wedge y\thicksim y)\backsim x,$ \\
$\hspace*{3cm}$ $y\thicksim (x\backsim x\wedge y)=x\thicksim (y\backsim x\wedge y)$ \\ 
for all $x, y\in A$.
\end{Def}

In other words, a pseudo equality algebra $A$ is commutative if and only if $x\vee_1 y=y\vee_1x$ and 
$x\vee_2 y=y\vee_2x$, for all $x, y\in A$.
Obviously an invariant pseudo equality algebra $(A, \wedge, \thicksim, \backsim, 1)$ is commutative if and only if $(y\thicksim x)\backsim y=(x\thicksim y)\backsim x$ and 
$y\thicksim (x\backsim y)=x\thicksim (y\backsim x)$, for all $x, y\in A$.

\begin{prop} \label{com-eq-20} An invariant pseudo equality algebra 
$(A, \wedge, \thicksim, \backsim, 1)$ is commutative if and only if its corresponding pseudo 
BCK(pC)-meet-semilattice $\Psi(A)$ is commutative.
\end{prop}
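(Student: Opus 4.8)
The plan is to use invariance to rewrite the two implications of the BCK-reduct $\Psi(A)$ directly in terms of the equality operations $\thicksim$ and $\backsim$, and then to observe that both notions of commutativity collapse to the very same pair of identities.

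First I would record that, since $A$ is invariant, Proposition \ref{ps-eq-150} yields $x\wedge y\thicksim y=x\thicksim y$ and $x\backsim x\wedge y=x\backsim y$ for all $x,y\in A$. Combining these with the definition of $\Psi$ from Theorem \ref{ps-eq-110}$(1)$ (namely $x\rightarrow y=x\wedge y\thicksim x$ and $x\rightsquigarrow y=x\backsim x\wedge y$), and using $x\wedge y=y\wedge x$, I obtain the simplified formulas $x\rightarrow y=(y\wedge x)\thicksim x=y\thicksim x$ and $x\rightsquigarrow y=x\backsim x\wedge y=x\backsim y$. These two identities are the crux of the argument: under invariance the arrows of $\Psi(A)$ are just the equalities read in the appropriate order.

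Next I would substitute these into the commutativity conditions for the pseudo BCK(pC)-meet-semilattice $\Psi(A)$, namely $(x\rightarrow y)\rightsquigarrow y=(y\rightarrow x)\rightsquigarrow x$ and $(x\rightsquigarrow y)\rightarrow y=(y\rightsquigarrow x)\rightarrow x$. The first pair becomes $(y\thicksim x)\backsim y$ and $(x\thicksim y)\backsim x$, while the second pair becomes $y\thicksim (x\backsim y)$ and $x\thicksim (y\backsim x)$. Thus $\Psi(A)$ is commutative if and only if $(y\thicksim x)\backsim y=(x\thicksim y)\backsim x$ and $y\thicksim (x\backsim y)=x\thicksim (y\backsim x)$ hold for all $x,y\in A$.

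Finally I would invoke the remark following Definition \ref{com-eq-10}, which states that an invariant pseudo equality algebra $A$ is commutative precisely when these same two identities hold. Since every rewriting above is an equivalence, the commutativity of $\Psi(A)$ and the commutativity of $A$ are logically equivalent, which completes the proof. I do not expect a genuine obstacle here: the argument is essentially a translation between the two signatures, and the only point demanding care is the bookkeeping of which implication ($\rightarrow$ or $\rightsquigarrow$) pairs with which equality in each of the four composite terms, together with the elementary rewriting $x\wedge y=y\wedge x$ needed to reduce $x\wedge y\thicksim x$ to $y\thicksim x$.
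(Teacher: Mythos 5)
Your proof is correct, and it is in essence the paper's own translation argument. The identities $x\rightarrow y=y\thicksim x$ and $x\rightsquigarrow y=x\backsim y$ do follow from Proposition \ref{ps-eq-150} exactly as you describe, and with them both notions of commutativity collapse to the common pair $(y\thicksim x)\backsim y=(x\thicksim y)\backsim x$ and $y\thicksim (x\backsim y)=x\thicksim (y\backsim x)$, which is precisely the reformulation recorded in the remark after Definition \ref{com-eq-10}; your converse half is essentially word-for-word the paper's computation. The one genuine difference is in the forward direction: the paper does not invoke invariance there, but instead uses Proposition \ref{ps-eq-20}$(1)$, which gives $y\le x\wedge y\thicksim x$ and $y\le x\backsim x\wedge y$, hence $(x\wedge y\thicksim x)\wedge y=y$, so that $(x\wedge y\thicksim x)\backsim y$ is recognized as $(x\rightarrow y)\rightsquigarrow y$ without any rewriting of $\thicksim$ and $\backsim$ themselves. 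That choice makes the implication ``$A$ commutative $\Rightarrow$ $\Psi(A)$ commutative'' valid for arbitrary, not necessarily invariant, pseudo equality algebras, which is exactly the content of Proposition \ref{com-eq-70-10}$(1)$ later in the paper. Your symmetric use of invariance in both directions is shorter and arguably cleaner for the stated equivalence, but it proves only that equivalence; the paper's asymmetric organization yields the stronger one-sided statement as a by-product.
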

\begin{proof} 
Let $(A, \wedge, \thicksim, \backsim, 1)$ be an invariant pseudo equality algebra, so $\Phi(\Psi(A))=A$ and 
$x\wedge y\thicksim y=x\thicksim y$ and $x\backsim x\wedge y=x\backsim y$, for all $x, y\in A$. \\
Suppose that $(A, \wedge, \thicksim, \backsim, 1)$ is commutative, that is, 
$(x\wedge y\thicksim x)\backsim y=(x\wedge y\thicksim y)\backsim x,$ 
$y\thicksim (x\backsim x\wedge y)=x\thicksim (y\backsim x\wedge y)$ for all $x, y\in A$. \\
Applying Proposition \ref{ps-eq-20}$(1)$, it follows that \\
$\hspace*{2cm}$
$(x\wedge y\thicksim x)\backsim (x\wedge y\thicksim x)\wedge y=
(x\wedge y\thicksim y)\backsim (x\wedge y\thicksim y)\wedge x$ and \\
$\hspace*{2cm}$ 
$(x\backsim x\wedge y)\wedge y\thicksim (x\backsim x\wedge y)=
(y\backsim x\wedge y)\wedge x\thicksim (y\backsim x\wedge y)$ \\
for all $x, y\in A$. Hence: \\
$\hspace*{3cm}$ $(x\rightarrow y)\rightsquigarrow y=(y\rightarrow x)\rightsquigarrow x$ and \\
$\hspace*{3cm}$ $(x\rightsquigarrow y)\rightarrow y=(y\rightsquigarrow x)\rightarrow x$ \\
for all $x, y\in A$. \\ 
Thus $\Psi(A)=(A, \wedge, \rightarrow, \rightsquigarrow, 1)$ is a commutative pseudo BCK(pC)-meet-semilattice. \\
Conversely, suppose that $\Psi(A)$ is commutative. We have: \\ 
$\hspace*{2cm}$ 
$(x\wedge y\thicksim x)\backsim y=(y\thicksim x)\backsim y=(x\rightarrow y)\rightsquigarrow y=
(y\rightarrow x)\rightsquigarrow x$ \\
$\hspace*{4.7cm}$
$=(x\thicksim y)\backsim x=(x\wedge y\thicksim y)\backsim x)$ and \\
$\hspace*{2cm}$ 
$y\thicksim (x\backsim x\wedge y)=y\thicksim (x\backsim y)=(x\rightsquigarrow y)\rightarrow y=
(y\rightsquigarrow x)\rightarrow x$ \\
$\hspace*{4.7cm}$
$=x\thicksim (y\backsim x)=x\thicksim (y\backsim x\wedge y)$, \\
hence $\Phi(\Psi(A))=A$ is commutative. 
\end{proof}

\begin{cor} \label{com-eq-30} If $(A, \wedge, \thicksim, \backsim, 1)$ is a commutative pseudo equality algebra, 
then $(A, \le)$ is a distributive lattice.  
\end{cor}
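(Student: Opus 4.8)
The plan is to transfer the problem to the corresponding pseudo BCK(pC)-meet-semilattice $\Psi(A)=(A,\wedge,\rightarrow,\rightsquigarrow,1)$ of Theorem \ref{ps-eq-110}(1), whose underlying poset $(A,\le)$ is literally the same as that of $A$, both orders being given by the common meet $\wedge$. The key preliminary observation, valid in \emph{every} pseudo equality algebra and not only the invariant ones, is that the two operations used to define commutativity are exactly the ``commutative BCK-joins'' of $\Psi(A)$. Indeed, writing $p=x\wedge y\thicksim x=x\rightarrow y$, Proposition \ref{ps-eq-20}(1) gives $y\le p$, so $p\wedge y=y$ and hence $p\backsim y=p\backsim(p\wedge y)=p\rightsquigarrow y$; this yields $x\vee_1 y=(x\rightarrow y)\rightsquigarrow y$. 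Dually, using $y\le x\rightsquigarrow y$ from Proposition \ref{ps-eq-20}(1), one gets $x\vee_2 y=(x\rightsquigarrow y)\rightarrow y$. Moreover $x\wedge y\thicksim y=y\rightarrow x$ and $x\backsim x\wedge y=y\rightsquigarrow x$ hold directly from the definition of the implications.

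First I would use these identities to rewrite the two commutativity axioms of $A$. The first axiom $(x\wedge y\thicksim x)\backsim y=(x\wedge y\thicksim y)\backsim x$ becomes $(x\rightarrow y)\rightsquigarrow y=(y\rightarrow x)\rightsquigarrow x$, the right-hand side simplifying because $x\le y\rightarrow x$ (again Proposition \ref{ps-eq-20}(1)); and the second axiom becomes $(x\rightsquigarrow y)\rightarrow y=(y\rightsquigarrow x)\rightarrow x$. These are precisely the defining identities of a commutative pseudo BCK(pC)-meet-semilattice, so $\Psi(A)$ is commutative. Note that this step reproves the ``only if'' direction of Proposition \ref{com-eq-20} without invoking invariance, which is exactly what allows the corollary to hold for an \emph{arbitrary} commutative pseudo equality algebra rather than only the invariant ones covered by Proposition \ref{com-eq-20}.

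Then I would invoke the known structure theorem for commutative pseudo BCK-meet-semilattices (see \cite{Kuhr6, Ciu7}): such a structure is a distributive lattice whose join is $x\vee y=(x\rightarrow y)\rightsquigarrow y=(y\rightarrow x)\rightsquigarrow x$. In particular $\vee_1=\vee_2$ is the supremum in $(A,\le)$, which is consistent with the bounds already recorded in Proposition \ref{ps-eq-30-20}(4). Since $(A,\le)$ is the same poset whether viewed through $A$ or through $\Psi(A)$, the conclusion transfers verbatim and $(A,\le)$ is a distributive lattice.

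The main obstacle is really the content of the first two paragraphs, namely recognizing that the commutativity of $A$, defined through $\vee_1$ and $\vee_2$, coincides with the commutativity of $\Psi(A)$ even in the non-invariant setting; once this identification is in place, distributivity is imported wholesale from the well-developed theory of commutative pseudo BCK-algebras. A minor point to verify carefully is that the reductions $p\backsim y=p\rightsquigarrow y$ and its dual rely solely on $y\le x\rightarrow y$ and $y\le x\rightsquigarrow y$ from Proposition \ref{ps-eq-20}(1), so that no hidden appeal to invariance (i.e.\ to $\Phi(\Psi(A))=A$) slips into the argument.
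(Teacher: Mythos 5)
Your proposal is correct and follows the same overall route as the paper: establish that $\Psi(A)$ is a commutative pseudo BCK(pC)-meet-semilattice, then import K\"uhr's results that such a structure is a join-semilattice with $x\vee y=(x\rightarrow y)\rightsquigarrow y$ (hence, together with $\wedge$, a lattice) and that the lattice is distributive. The one genuine difference is worth recording: the paper's proof simply cites Proposition \ref{com-eq-20}, whose statement carries an invariance hypothesis that Corollary \ref{com-eq-30} does not, whereas you re-derive the needed implication directly from Proposition \ref{ps-eq-20}(1) --- namely $y\le x\rightarrow y$ and $y\le x\rightsquigarrow y$ give $x\vee_1 y=(x\rightarrow y)\rightsquigarrow y$ and $x\vee_2 y=(x\rightsquigarrow y)\rightarrow y$, so commutativity of $A$ translates verbatim into commutativity of $\Psi(A)$ with no appeal to $\Phi(\Psi(A))=A$. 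This patches a formal citation mismatch in the paper (the invariance-free statement only appears later, as Proposition \ref{com-eq-70-10}(1), after the corollary), and it is exactly what makes the corollary valid for arbitrary commutative pseudo equality algebras. One small slip: you wrote $x\backsim x\wedge y=y\rightsquigarrow x$, but by the definition of the implications $x\backsim x\wedge y=x\rightsquigarrow y$; the identity you actually need (and subsequently use correctly) is $y\backsim x\wedge y=y\rightsquigarrow x$, so the error is only typographical and does not affect the argument.
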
 
\begin{proof} If $A$ is commutative, then applying Proposition \ref{com-eq-20} it follows that $\Psi(A)$ is a 
commutative pseudo BCK-meet-semilattice. According to \cite[p. 65]{Kuhr6}, $(A, \le)$ is a join-semilattice lattice with $x\vee y=(x\rightarrow y)\rightsquigarrow y=(x\rightsquigarrow y)\rightarrow y$. 
Since $(A, \le)$ is a meet-semilattice, it folows that it is a lattice. 
Finally, applying \cite[Corollary 4.1.9]{Kuhr6}, $(A, \le)$ is a distributive lattice. 
\end{proof}

\begin{Def} \label{com-eq-40} A pseudo equality algebra $(A, \wedge, \thicksim, \backsim, 1)$ is said to be a 
\emph{symmetric pseudo equality algebra} if $x\backsim y=y\thicksim x$ for all $x, y\in A$. 
\end{Def}

Obviously any equality algebra is a symmetric equality algebra.

\begin{prop} \label{com-eq-50} Every finite invariant commutative pseudo equality algebra is a 
symmetric pseudo equality algebra. 
\end{prop}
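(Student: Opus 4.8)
The plan is to transfer the whole problem to the corresponding pseudo BCK(pC)-meet-semilattice $\Psi(A)=(A,\wedge,\rightarrow,\rightsquigarrow,1)$ and to prove there that the two implications coincide. First I would record the translation. Since $A$ is invariant, Proposition \ref{ps-eq-150} gives $x\wedge y\thicksim y=x\thicksim y$ and $x\backsim x\wedge y=x\backsim y$; combined with the definitions $x\rightarrow y=x\wedge y\thicksim x$ and $x\rightsquigarrow y=x\backsim x\wedge y$ this yields $x\rightarrow y=y\thicksim x$ and $x\rightsquigarrow y=x\backsim y$ for all $x,y\in A$. Consequently the symmetry condition $x\backsim y=y\thicksim x$ of Definition \ref{com-eq-40} is literally equivalent to the identity $x\rightsquigarrow y=x\rightarrow y$ in $\Psi(A)$. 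By Proposition \ref{com-eq-20}, $\Psi(A)$ is a finite commutative pseudo BCK(pC)-meet-semilattice, so it suffices to prove that such a structure satisfies $\rightarrow=\rightsquigarrow$.

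The heart of the argument is the linearly ordered case, and this is where finiteness is used. Fix $y$ and consider, on the interval $[y,1]$, the maps $N_y(t)=t\rightarrow y$ and $N_y'(t)=t\rightsquigarrow y$. Using Lemma \ref{ps-eq-40}(5) both land in $[y,1]$, and by Lemma \ref{ps-eq-40}(2) both are order-reversing. Commutativity enters through the join formula $(t\rightarrow y)\rightsquigarrow y=t\vee y=(t\rightsquigarrow y)\rightarrow y$ recorded in the proof of Corollary \ref{com-eq-30}: for $t\ge y$ this gives $N_y'(N_y(t))=t=N_y(N_y'(t))$, so $N_y$ and $N_y'$ are mutually inverse order-reversing bijections of $[y,1]$ onto itself. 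When the algebra is a finite chain, $[y,1]$ is a finite chain, which admits exactly one order-reversing self-bijection (the order reversal, sending the $k$-th largest element to the $k$-th smallest); hence $N_y=N_y'$, that is $x\rightarrow y=x\rightsquigarrow y$ for all $x\ge y$. Together with the trivial case $x\le y$ (both implications equal $1$) this proves $\rightarrow=\rightsquigarrow$ on every finite linearly ordered commutative pseudo BCK-algebra.

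Finally I would remove the linearity assumption by a subdirect decomposition: every commutative pseudo BCK-algebra is a subdirect product of linearly ordered commutative pseudo BCK-algebras (Kühr, \cite{Kuhr6}); when $\Psi(A)$ is finite its subdirect factors are finite chains, each of which is symmetric by the previous step. Since $x\rightarrow y=x\rightsquigarrow y$ is an identity and the subdirect embedding is injective, the identity lifts from the factors to $\Psi(A)$. Translating back through the equivalence of the first paragraph gives $x\backsim y=y\thicksim x$ for all $x,y\in A$, i.e. $A$ is symmetric. The main obstacle, and the only place finiteness is genuinely needed, is the chain step: for infinite chains order-reversing bijections are far from unique (the negative cone of a non-abelian linearly ordered group is a non-symmetric commutative pseudo BCK-chain), so the uniqueness of the order reversal on a finite chain is exactly what forces $N_y=N_y'$. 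A secondary point to justify carefully is the reduction to chains via representability.
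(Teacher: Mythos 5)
Your first two paragraphs are sound. The translation step (invariance via Proposition \ref{ps-eq-150}, then Proposition \ref{com-eq-20}, reducing symmetry of $A$ to the identity $\rightarrow\,=\,\rightsquigarrow$ in $\Psi(A)$) is exactly the paper's reduction, and your argument for finite chains is correct: for $t\ge y$ the maps $N_y(t)=t\rightarrow y$ and $N_y'(t)=t\rightsquigarrow y$ are antitone self-maps of $[y,1]$ that are mutually inverse by the join formula $(t\rightarrow y)\rightsquigarrow y=t\vee y=(t\rightsquigarrow y)\rightarrow y$, and a finite chain has a unique antitone bijection. That is a genuine proof of the proposition for linearly ordered algebras, and your remark that negative cones of non-abelian linearly ordered groups block the infinite case is apt.

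The gap is the third paragraph. The theorem you invoke --- every commutative pseudo BCK-algebra is a subdirect product of linearly ordered ones --- is not in \cite{Kuhr6} and is in fact false. Bounded commutative pseudo BCK-algebras are term equivalent to pseudo MV-algebras (K\"uhr), pseudo MV-algebras are categorically equivalent to unital $\ell$-groups (Dvure\v censkij), and this equivalence matches subdirect decompositions into chains on both sides; since non-representable unital $\ell$-groups exist (for instance $\mathbb{Z}\times_{\mathrm{lex}}\mathrm{Aut}(\mathbb{Q})$ with strong unit $(1,\mathrm{id})$), there are lattice-ordered commutative pseudo BCK-algebras that embed in no product of chains. (These algebras do satisfy prelinearity; in the non-commutative setting prelinearity does not give representability, which is exactly why the ``secondary point'' you flag cannot be fixed by citation.) Restricting attention to finite algebras does not rescue the step either: you would need to show that every finite subdirectly irreducible commutative pseudo BCK-algebra is a chain, which your proposal never addresses, and the known route to that statement passes through the very fact you are trying to establish --- K\"uhr's result that finite commutative pseudo BCK-algebras satisfy $\rightarrow\,=\,\rightsquigarrow$, i.e.\ \cite[Corollary 4.1.6]{Kuhr6}, which is precisely what the paper's one-line proof cites after the reduction to $\Psi(A)$. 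As it stands, your argument proves the proposition only for linearly ordered $A$.
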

\begin{proof} Let $(A, \wedge, \thicksim, \backsim, 1)$ be a finite commutative pseudo equality algebra. 
Applying Proposition \ref{com-eq-20}, $\Psi(A)=(A, \wedge, \rightarrow, \rightsquigarrow, 1)$ is a finite commutative pseudo BCK(pC) meet-semilattice. According to \cite[Corollary 4.1.6]{Kuhr6}, we have 
$x\rightarrow y= x\rightsquigarrow y$ for all $x, y\in \Psi(A)$. 
It follows that $x\backsim y=x\rightsquigarrow y=x\rightarrow y=y\thicksim x$ for all $x, y\in A$. 
Hence $A$ is a symmetric pseudo equality algebra. 
\end{proof}

\begin{rem} \label{com-eq-60} If $(A, \wedge, \thicksim, \backsim, 1)$ is a symmetric pseudo equality algebra, then $\Psi(A)=(A, \wedge, \rightarrow, \rightsquigarrow, 1)$ is a BCK(pC)-meet-semilattice. \\
Indeed, since $A$ is symmetric, we have $x\rightarrow y=x\wedge y\thicksim x=x\backsim x\wedge y=x\rightsquigarrow y$.
\end{rem}

\begin{theo} \label{com-eq-70} Let $(A, \wedge, \thicksim, \backsim, 1)$ be a pseudo equality algebra. 
Then the following are equivalent for all $x, y\in A$: \\
$(a)$ $A$ is commutative; \\
$(b)$ $x\wedge y\thicksim x=y\thicksim ((x\wedge y\thicksim y)\backsim x)$ and 
      $x\backsim x\wedge y=(x\thicksim (y\backsim x\wedge y))\backsim y;$ \\
$(c)$ $x\le y$ implies $y=(x\thicksim y)\backsim x=x\thicksim (y\backsim x)$.
\end{theo}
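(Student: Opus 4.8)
The plan is to establish the cycle $(a)\Rightarrow(b)\Rightarrow(c)\Rightarrow(a)$, working throughout with the abbreviations $x\vee_1 y=(x\wedge y\thicksim x)\backsim y$ and $x\vee_2 y=y\thicksim(x\backsim x\wedge y)$ fixed before Proposition \ref{ps-eq-30-20}; in this language $(a)$ is exactly $x\vee_1 y=y\vee_1 x$ and $x\vee_2 y=y\vee_2 x$. I would first record the ingredients used repeatedly: by Proposition \ref{ps-eq-30-20}$(4)$ each of $x\vee_1 y,\ x\vee_2 y$ is an upper bound of $\{x,y\}$; by Proposition \ref{ps-eq-30} one has $y\thicksim(x\vee_1 y)=x\wedge y\thicksim x$ and $(x\vee_2 y)\backsim y=x\backsim x\wedge y$; and for comparable elements $u\le v$ the equalities collapse to the implications of $\Psi(\mathcal{A})$, namely $u\thicksim v=v\rightarrow u$ and $v\backsim u=v\rightsquigarrow u$. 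In particular $(x\wedge y\thicksim y)\backsim x=y\vee_1 x$ and $x\thicksim(y\backsim x\wedge y)=y\vee_2 x$, while $x\vee_1 y=(x\rightarrow y)\rightsquigarrow y$ and $x\vee_2 y=(x\rightsquigarrow y)\rightarrow y$.

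For $(a)\Rightarrow(b)$ I read the right-hand side of the first identity of $(b)$ as $y\thicksim(y\vee_1 x)$; commutativity turns $y\vee_1 x$ into $x\vee_1 y$, and Proposition \ref{ps-eq-30}$(1)$ identifies $y\thicksim(x\vee_1 y)$ with the left-hand side $x\wedge y\thicksim x$. The second identity is dual: its right-hand side is $(y\vee_2 x)\backsim y=(x\vee_2 y)\backsim y$, which Proposition \ref{ps-eq-30}$(2)$ turns into $x\backsim x\wedge y$. For $(b)\Rightarrow(c)$ I specialize $(b)$ to a comparable pair $x\le y$, where $x\wedge y\thicksim x=1$, so the first identity becomes $y\thicksim\big((x\thicksim y)\backsim x\big)=1$; since $(x\thicksim y)\backsim x=y\vee_1 x\ge y$ by Proposition \ref{ps-eq-30-20}$(4)$, Proposition \ref{ps-eq-10}$(3)$ forces $(x\thicksim y)\backsim x\le y$ and hence $(x\thicksim y)\backsim x=y$. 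The second half of $(c)$ comes out the same way from the second identity of $(b)$, using $x\backsim x\wedge y=1$ and $x\thicksim(y\backsim x)=y\vee_2 x\ge y$.

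The substantive step is $(c)\Rightarrow(a)$, and the main obstacle is that $(c)$ only concerns comparable pairs: it must be upgraded to a statement about all pairs. The key observation is that $(c)$ promotes the mere upper bounds $x\vee_1 y,\ x\vee_2 y$ into genuine suprema, which are then symmetric for free. Passing to the pseudo BCK(pC)-meet-semilattice $\Psi(\mathcal{A})$ of Theorem \ref{ps-eq-110}$(1)$, condition $(c)$ reads: $a\le b$ implies $(b\rightarrow a)\rightsquigarrow a=b$ and $(b\rightsquigarrow a)\rightarrow a=b$. Now fix $x,y$ and let $z$ be any common upper bound. From $y\le z$ and $(c)$ I obtain $(z\rightarrow y)\rightsquigarrow y=z$, while from $x\le z$ and the antitonicity of the arrows (Lemma \ref{ps-eq-40}$(2)$) I get $z\rightarrow y\le x\rightarrow y$ and then $(x\rightarrow y)\rightsquigarrow y\le(z\rightarrow y)\rightsquigarrow y=z$. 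Thus $x\vee_1 y=(x\rightarrow y)\rightsquigarrow y$ sits below every common upper bound while being one itself, so $x\vee_1 y=\sup\{x,y\}$; the identical computation gives $y\vee_1 x=\sup\{x,y\}$, whence $x\vee_1 y=y\vee_1 x$ by uniqueness of suprema. The dual computation with $\rightsquigarrow$ and the second half of $(c)$ yields $x\vee_2 y=y\vee_2 x$, which is $(a)$. Once the least-upper-bound property is verified the symmetry is automatic, so this verification is the only real work.
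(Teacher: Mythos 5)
Your proposal is correct, and the equivalences $(a)\Rightarrow(b)$ and $(b)\Rightarrow(c)$ run essentially as in the paper (Proposition \ref{ps-eq-30} plus commutativity for the first, specialization to a comparable pair plus Proposition \ref{ps-eq-10}$(3)$ and the upper-bound property for the second). Where you genuinely diverge is in the substantive implication $(c)\Rightarrow(a)$. The paper stays inside the pseudo equality algebra and argues equationally: starting from $x\le(x\wedge y\thicksim x)\backsim y$ it applies $(c)$, then uses the monotonicity statements of Proposition \ref{ps-eq-20-10}$(1)$ twice together with Proposition \ref{ps-eq-30-10}$(1)$ to derive the single inequality $(x\wedge y\thicksim y)\backsim x\le(x\wedge y\thicksim x)\backsim y$, and concludes by interchanging $x$ and $y$. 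You instead pass to $\Psi(\mathcal{A})$ via Theorem \ref{ps-eq-110}$(1)$, read $(c)$ as the pseudo BCK condition $b=(b\rightarrow a)\rightsquigarrow a=(b\rightsquigarrow a)\rightarrow a$ for $a\le b$, and show that $(c)$ forces $x\vee_1 y=(x\rightarrow y)\rightsquigarrow y$ to lie below every common upper bound (antitonicity, Lemma \ref{ps-eq-40}$(2)$, applied twice), hence to be the supremum of $\{x,y\}$; symmetry is then free from uniqueness of suprema, and dually for $\vee_2$. Your route is conceptually cleaner and explains the phenomenon: commutativity holds exactly because the terms $x\vee_1 y$, $x\vee_2 y$ are genuine joins, which is the pseudo BCK fact the paper itself invokes elsewhere (the criterion of \cite{Ciu7} quoted in Proposition \ref{com-eq-70-10}, and the join formula behind Corollary \ref{com-eq-30}); in effect you reprove that criterion inline rather than compute. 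What the paper's argument buys in exchange is self-containedness at the level of pseudo equality identities, with no appeal to the translation machinery or to external pseudo BCK facts. One small point to make explicit in your write-up: the identifications $(x\thicksim y)\backsim x=(y\rightarrow x)\rightsquigarrow x$ and $x\thicksim(y\backsim x)=(y\rightsquigarrow x)\rightarrow x$ for $x\le y$ need not only the collapse rules but also the comparabilities $x\le y\rightarrow x$ and $x\le y\rightsquigarrow x$ (Proposition \ref{ps-eq-10}$(7)$), which you use tacitly.
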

\begin{proof}
$(a)\Rightarrow (b)$ It follows by Proposition \ref{ps-eq-30} and $(a)$. \\
$(b)\Rightarrow (c)$ By $(b)$, for $x\le y$ we get $(x\thicksim (y\backsim x))\backsim y=1$.  
Applying Proposition \ref{ps-eq-10}$(3)$, it follows that $x\thicksim (y\backsim x)\le y$. 
On the other hand, by Proposition \ref{ps-eq-10}$(2)$ we have $y\le x\thicksim (y\backsim x)$, hence 
$y=x\thicksim (y\backsim x)$. Similarly $y=(x\thicksim y)\backsim x$. \\
$(c)\Rightarrow (a)$ By Proposition \ref{ps-eq-20}$(3)$ we have $x\le (x\wedge y\thicksim x)\backsim y$. 
Applying $(c)$ we get: \\
$\hspace*{1cm}$
$(x\wedge y\thicksim x)\backsim y = (x\thicksim ((x\wedge y\thicksim x)\backsim y))\backsim x$. \\
By Proposition \ref{ps-eq-20}$(4)$, $y\le (x\wedge y\thicksim x)\backsim y$. \\
Applying Proposition \ref{ps-eq-20-10}$(1)$ for $x:=y$, $y:=(x\wedge y\thicksim x)\backsim y$, $z:=x$, we have: \\ 
$\hspace*{1cm}$
$((x\wedge y\thicksim x)\backsim y)\wedge x\thicksim ((x\wedge y\thicksim x)\backsim y)\le x\wedge y\thicksim y$. \\ 
Since by Proposition \ref{ps-eq-20}$(3)$, $x\le (x\wedge y\thicksim x)\backsim y$, we get: \\
$\hspace*{1cm}$
$x\thicksim ((x\wedge y\thicksim x)\backsim y)\le x\wedge y\thicksim y$. \\
Applying again Proposition \ref{ps-eq-20-10}$(1)$ for $x:=x\thicksim ((x\wedge y\thicksim x)\backsim y)$, 
$y:=x\wedge y\thicksim y$, $z:=x$, we get: \\
$\hspace*{1cm}$
$(x\wedge y\thicksim y)\backsim (x\wedge y\thicksim y)\wedge x\le 
(x\thicksim ((x\wedge y\thicksim x)\backsim y))\backsim (x\thicksim ((x\wedge y\thicksim x)\backsim y))\wedge x$. \\
Hence by Propositions \ref{ps-eq-20}$(3)$ and \ref{ps-eq-30-10}$(1)$: \\ 
$\hspace*{1cm}$ 
$(x\wedge y\thicksim y)\backsim x \le (x\thicksim ((x\wedge y\thicksim x)\backsim y))\backsim x=
(x\wedge y\thicksim x)\backsim y$. \\
By interchanging $x$ and $y$ we obtain $(x\wedge y\thicksim x)\backsim y\le (x\wedge y\thicksim y)\backsim x$. \\
Thus $(x\wedge y\thicksim x)\backsim y = (x\wedge y\thicksim y)\backsim x$. 
Similarly $y\thicksim (x\backsim x\wedge y)=x\thicksim (y\backsim x\wedge y)$. \\
Hence $A$ is commutative. 
\end{proof} 

\begin{prop} \label{com-eq-70-10} The following statements hold: \\
$(1)$ if $(A, \wedge, \thicksim, \backsim, 1)$ is a commutative pseudo equality algebra, then $\Psi(A)$ is a commutative pseudo BCK(pC)-meet-semilattice; \\
$(2)$ if $(B, \wedge, \rightarrow, \rightsquigarrow, 1)$ is a commutative BCK(pC)-meet-semilattice, then $\Phi(B)$ 
is a commutative pseudo equality algebra.
\end{prop}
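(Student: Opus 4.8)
The plan is to observe that in both parts the hard structural work has already been carried out for us: by Theorem \ref{ps-eq-110}$(1)$ the algebra $\Psi(A)$ is automatically a pseudo BCK(pC)-meet-semilattice, and by Theorem \ref{ps-eq-110}$(2)$ the algebra $\Phi(B)$ is automatically a pseudo equality algebra. Hence for $(1)$ it suffices to verify the two commutativity identities $(x\rightarrow y)\rightsquigarrow y=(y\rightarrow x)\rightsquigarrow x$ and $(x\rightsquigarrow y)\rightarrow y=(y\rightsquigarrow x)\rightarrow x$ of $\Psi(A)$, and for $(2)$ it suffices to verify the two identities of Definition \ref{com-eq-10} for $\Phi(B)$. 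In each case this reduces to translating between the operations $\thicksim,\backsim$ and the operations $\rightarrow,\rightsquigarrow$ and then quoting the commutativity hypothesis.

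For $(1)$, recall that in $\Psi(A)$ one has $x\rightarrow y=x\wedge y\thicksim x$ and $x\rightsquigarrow y=x\backsim x\wedge y$. First I would show that $(x\rightarrow y)\rightsquigarrow y=x\vee_1 y$ and $(x\rightsquigarrow y)\rightarrow y=x\vee_2 y$. By Proposition \ref{ps-eq-10}$(7)$ we have $y\le (x\rightarrow y)\wedge(x\rightsquigarrow y)$, so both $(x\rightarrow y)\wedge y=y$ and $(x\rightsquigarrow y)\wedge y=y$. Unfolding the definitions of $\rightsquigarrow$ and $\rightarrow$ then collapses the inner meets and gives $(x\rightarrow y)\rightsquigarrow y=(x\rightarrow y)\backsim y=(x\wedge y\thicksim x)\backsim y=x\vee_1 y$ and, by the analogous computation, $(x\rightsquigarrow y)\rightarrow y=y\thicksim(x\backsim x\wedge y)=x\vee_2 y$. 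Relabelling $x$ and $y$ yields $(y\rightarrow x)\rightsquigarrow x=y\vee_1 x$ and $(y\rightsquigarrow x)\rightarrow x=y\vee_2 x$. Since $A$ is commutative, that is $x\vee_1 y=y\vee_1 x$ and $x\vee_2 y=y\vee_2 x$, the two commutativity identities of $\Psi(A)$ follow at once.

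For $(2)$, recall that in $\Phi(B)$ one has $x\thicksim y=y\rightarrow x$ and $x\backsim y=x\rightsquigarrow y$, and that in a pseudo BCK(pC)-meet-semilattice $x\rightarrow x\wedge y=x\rightarrow y$ and $x\rightsquigarrow x\wedge y=x\rightsquigarrow y$. I would rewrite each side of the two identities of Definition \ref{com-eq-10}. For the first, $x\wedge y\thicksim x=x\rightarrow(x\wedge y)=x\rightarrow y$, so $(x\wedge y\thicksim x)\backsim y=(x\rightarrow y)\rightsquigarrow y$, and likewise $(x\wedge y\thicksim y)\backsim x=(y\rightarrow x)\rightsquigarrow x$. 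For the second, $x\backsim x\wedge y=x\rightsquigarrow y$, so $y\thicksim(x\backsim x\wedge y)=(x\rightsquigarrow y)\rightarrow y$, and likewise $x\thicksim(y\backsim x\wedge y)=(y\rightsquigarrow x)\rightarrow x$. Thus the two defining conditions of commutativity for $\Phi(B)$ become exactly the two commutativity identities of $B$, which hold by hypothesis, and therefore $\Phi(B)$ is commutative.

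I do not expect a genuine obstacle here; the only points requiring care are the bookkeeping of which inner meets collapse in part $(1)$ (justified by the order estimate $y\le(x\rightarrow y)\wedge(x\rightsquigarrow y)$ of Proposition \ref{ps-eq-10}$(7)$) and the correct use of the $(pC)$-identities $x\rightarrow x\wedge y=x\rightarrow y$ and $x\rightsquigarrow x\wedge y=x\rightsquigarrow y$ in part $(2)$, together with the commutativity $x\wedge y=y\wedge x$ of the meet when relabelling.
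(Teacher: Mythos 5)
Your proof is correct, but it takes a genuinely different route from the paper's. The paper reduces both directions to statements about \emph{comparable} elements: it invokes the characterization of commutative pseudo BCK-algebras from \cite[Th.\ 3.9]{Ciu7} ($x\le y$ implies $y=(y\rightarrow x)\rightsquigarrow x=(y\rightsquigarrow x)\rightarrow x$) together with its pseudo equality analogue, Theorem \ref{com-eq-70}$(c)$ ($x\le y$ implies $y=(x\thicksim y)\backsim x=x\thicksim (y\backsim x)$), and then simply translates one restricted condition into the other using Proposition \ref{ps-eq-20}$(1)$. You instead verify the full, unrestricted commutativity identities directly: in $(1)$ you prove $(x\rightarrow y)\rightsquigarrow y=x\vee_1 y$ and $(x\rightsquigarrow y)\rightarrow y=x\vee_2 y$ in an arbitrary pseudo equality algebra (the meet-collapse via Proposition \ref{ps-eq-10}$(7)$ is exactly the right justification, and notably needs no invariance hypothesis), and in $(2)$ you use the (pC)-identities $x\rightarrow x\wedge y=x\rightarrow y$, $x\rightsquigarrow x\wedge y=x\rightsquigarrow y$ to turn the two conditions of Definition \ref{com-eq-10} into the two identities defining commutativity of $B$. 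What each approach buys: the paper's proof is shorter \emph{given} the characterization theorems, but it leans on the external reference \cite{Ciu7}; yours is self-contained within the paper's own lemmas and, as a by-product, exhibits the clean identities $(x\rightarrow y)\rightsquigarrow y=x\vee_1 y$ and $(x\rightsquigarrow y)\rightarrow y=x\vee_2 y$, which also make the equivalent $\vee_1,\vee_2$-formulation of commutativity completely transparent. In effect your argument for $(1)$ extends the computation the paper performs in Proposition \ref{com-eq-20} to the non-invariant setting, which is precisely the content of this proposition.
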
 
\begin{proof} 
We recall that according to \cite[Th. 3.9]{Ciu7}, a pseudo BCK-algebra $(X, \rightarrow, \rightsquigarrow, 1)$ is 
commutative if and only if for all $x, y\in X$ such that $x\le y$, we have  
$y=(y\rightarrow x)\rightsquigarrow x=(y\rightsquigarrow x)\rightarrow x$. \\
$(1)$ Suppose that $(A, \wedge, \thicksim, \backsim, 1)$ is a commutative pseudo equality algebra and 
let $x, y\in A$ such that $x\le y$. It follows by Theorem \ref{com-eq-70} that 
$y=(x\thicksim y)\backsim x=x\thicksim (y\backsim x)$. Applying Proposition \ref{ps-eq-20}$(1)$ we get: \\
$\hspace*{2cm}$
$(y\rightarrow x)\rightsquigarrow x=(x\wedge y\thicksim y)\backsim (x\wedge y\thicksim y)\wedge x=
(x\thicksim y)\backsim x=y$ and \\ 
$\hspace*{2cm}$
$(y\rightsquigarrow x)\rightarrow x=(y\backsim x\wedge y)\wedge x\thicksim (y\backsim x\wedge y)=
x\thicksim (y\backsim x)=y$, \\
that is $\Psi(A)$ is commutative. \\
$(2)$ Let $(B, \wedge, \rightarrow, \rightsquigarrow, 1)$ be a commutative BCK(pC)-meet-semilattice and let 
$x, y\in B$ such that $x\le y$. We have $(x\thicksim y)\backsim x=(y\rightarrow x)\rightsquigarrow x=y$ and 
$x\thicksim (y\backsim x)=(y\rightsquigarrow x)\rightarrow x=y$. \\
Hence by Theorem \ref{com-eq-70}, $\Phi(B)$ is commutative.  
\end{proof}

\begin{ex} \label{ps-ex-80} 
Let $(A, \wedge, \rightarrow, \rightsquigarrow, 1)$ be a commutative pseudo BCK-meet-semilattice, that is  
$(x\rightarrow y)\rightsquigarrow y=(y\rightarrow x)\rightsquigarrow x$ and 
$(x\rightsquigarrow y)\rightarrow y=(y\rightsquigarrow x)\rightarrow x$ for all $x, y\in A$. \\
By \cite[Lemma 4.1.12]{Kuhr6}, $A$ is a commutative pseudo BCK(pD)-meet-semilattice, so 
$(A, \wedge, \thicksim, \backsim, 1)$ is a pseudo equality algebra, where 
$x\thicksim y=y\rightarrow x$, $x\backsim y=x\rightsquigarrow y$.  
Moreover, by Proposition \ref{com-eq-70-10} it follows that the pseudo equality algebra $A$ is commutative. 
\end{ex}

\begin{ex} \label{ps-ex-90}
Let $(G, \vee,\wedge, \cdot, ^{-1}, e)$ be an $\ell$-group. On the negative cone $G^{-}=\{g\in G \mid g\le e\}$ we define the operations $x\rightarrow y=y\cdot (x\vee y)^{-1}$, $x\rightsquigarrow y=(x\vee y)^{-1}\cdot y$. 
According to \cite[Example 4.1.2]{Kuhr6} and Example \ref{ps-ex-80} , 
$(G^{-}, \wedge, \rightarrow, \rightsquigarrow, e)$ is a commutative pseudo BCK(pD)-meet-semilattice. 
By Propositions \ref{ps-eq-50} and \ref{com-eq-70-10} it follows that $\Phi(G^{-})$ is a commutative pseudo equality algebra.
\end{ex}

\begin{ex} \label{ps-ex-10} $\rm($\cite{Dvu7}$\rm)$
Let $(G, \vee,\wedge, \cdot, ^{-1}, e)$ be an $\ell$-group. On the negative cone $G^{-}=\{g\in G \mid g\le e\}$ we define the operations 
$x\thicksim y=(x\cdot y^{-1})\wedge e$, $x\backsim y=(x^{-1}\cdot y)\wedge e$.  
Then $(G^{-}, \wedge, \thicksim, \backsim, e)$ is a pseudo equality algebra. 
We have $x\thicksim y=y\backsim x$ if and only if $G$ is Abelian. 
Thus $(G^{-}, \wedge, \thicksim, \backsim, e)$ is a symmetric pseudo equality algebra if and only if $G$ is Abelian. 
\end{ex}

\begin{ex}\label{ps-ex-50} $\rm($\cite{Ciu8}$\rm)$
Let $A=\{0,a,b,1\}$ with $0<a, b<1$ be a lattice whose diagram is below. 
\begin{center}
\begin{picture}(50,100)(0,0)
\put(37,11){\circle*{3}}
\put(34,0){$0$}
\put(37,11){\line(3,4){20}}
\put(57,37){\circle*{3}}
\put(61,35){$b$}

\put(37,11){\line(-3,4){20}}
\put(18,37){\circle*{3}}
\put(8,35){$a$}

\put(18,37){\line(3,4){20}}
\put(38,64){\circle*{3}}
\put(35,68){$1$} 

\put(38,64){\line(3,-4){20}}

\end{picture}
\end{center}

Then $(A, \wedge, \rightarrow, 1)$ is a BCK(C)-lattice, and $\Phi(A)=(A, \wedge, \thicksim, \backsim, 1)$ 
is a pseudo equality algebra with the operations $\thicksim, \backsim$ given below: 
\[
\hspace{10mm}
\begin{array}{c|ccccc}
\thicksim& 0 & a & b & 1 \\ \hline
0 & 1 & b & a & 0 \\ 
a & 1 & 1 & a & a \\ 
b & 1 & b & 1 & b \\  
1 & 1 & 1 & 1 & 1
\end{array}
\hspace{10mm}
\begin{array}{c|ccccc}
\backsim& 0 & a & b & 1 \\ \hline
0 & 1 & 1 & 1 & 1 \\ 
a & b & 1 & b & 1 \\ 
b & a & a & 1 & 1 \\  
1 & 0 & a & b & 1
\end{array}
.
\]
One can easily chack that $\Phi(\Psi(A))=A$, thus $(A, \wedge, \thicksim, \backsim, 1)$ is an invariant pseudo 
equality algebra. Moreover, $A$ is a commutative and symmetric pseudo equality algebra. 
We mention that ${\mathcal DS}(A)={\mathcal DS}_n(A)=\{\{1\}, \{a,1\}, \{b,1\}, A\}$. 
\end{ex}


$\vspace*{5mm}$

\section{Commutative deductive systems of pseudo equality algebras}

We introduce the notion of a commutative deductive system of a pseudo equality algebra and we give equivalent  
conditions for this notion. We show that a pseudo equality algebra $A$ is commutative if and only if $\{1\}$ is a 
commutative deductive system of $A$. 
Another result consists of proving that all deductive systems of a commutative pseudo equality algebra are commutative. 
It is also proved that a normal deductive system $H$ of a pseudo equality algebra $A$ is commutative if and only if $A/H$ is a commutative pseudo equality algebra.

\begin{Def} \label{com-ds-10} A deductive system $D$ of a pseudo equality algebra $(A, \wedge, \thicksim, \backsim, 1)$ is said to be \emph{commutative} if it satisfies the following conditions for all $x, y\in A$: \\
$(cds_1)$ $x\wedge y\thicksim y\in D$ implies $x\thicksim ((x\wedge y\thicksim x)\backsim y)\in D,$ \\
$(cds_2)$ $y\backsim x\wedge y\in D$ implies $(y\thicksim (x\backsim x\wedge y))\backsim x\in D$.  
\end{Def}

We will denote by ${\mathcal DS}_c(A)$ the set of all commutative deductive systems of $A$. \\
In other words, $D\in {\mathcal DS}_c(A)$ if and only if $x\wedge y\thicksim y\in D$ implies 
$x\thicksim x\vee_1 y\in D$ and  $y\backsim x\wedge y\in D$ implies $x\vee_2 y\backsim x\in D$, 
for all $x, y\in A$. 

\begin{prop} \label{com-ds-20} If $A$ is a commutative pseudo equality algebra, then 
${\mathcal DS}(A)={\mathcal DS}_c(A)$. 
\end{prop}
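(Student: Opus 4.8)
The plan is to prove the two inclusions separately, the first being immediate. By Definition \ref{com-ds-10} every commutative deductive system is in particular a deductive system, so ${\mathcal DS}_c(A)\subseteq {\mathcal DS}(A)$ holds with no hypothesis on $A$. Thus the entire content of the proposition is the reverse inclusion ${\mathcal DS}(A)\subseteq {\mathcal DS}_c(A)$: assuming $A$ commutative, I must verify that an arbitrary $D\in{\mathcal DS}(A)$ satisfies $(cds_1)$ and $(cds_2)$.

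The key observation I would exploit is that, for a commutative $A$, the two implications in Definition \ref{com-ds-10} are not genuine implications at all but equalities of the elements involved. Concretely, I claim the two identities
\[
x\thicksim\big((x\wedge y\thicksim x)\backsim y\big)=x\wedge y\thicksim y,\qquad
\big(y\thicksim (x\backsim x\wedge y)\big)\backsim x = y\backsim x\wedge y
\]
hold in every commutative pseudo equality algebra; equivalently, in the $\vee_1,\vee_2$ notation, $x\thicksim (x\vee_1 y)=x\wedge y\thicksim y$ and $(x\vee_2 y)\backsim x = y\backsim x\wedge y$. Granting these, $(cds_1)$ and $(cds_2)$ become trivial: if $x\wedge y\thicksim y\in D$ then the element $x\thicksim (x\vee_1 y)$ is literally equal to $x\wedge y\thicksim y$, hence lies in $D$, and symmetrically for $(cds_2)$. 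No use of the deductive-system axioms beyond mere membership is then needed, which is precisely why all of ${\mathcal DS}(A)$ qualifies.

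To establish the two identities I would invoke Proposition \ref{ps-eq-30-20}$(6)$, which gives $x\wedge y\thicksim x = y\thicksim (x\vee_1 y)$ and $x\backsim x\wedge y = (x\vee_2 y)\backsim y$ in any pseudo equality algebra. Interchanging the roles of $x$ and $y$ in the first of these yields $x\wedge y\thicksim y = x\thicksim (y\vee_1 x)$, and commutativity of $A$ --- which by the remark following Definition \ref{com-eq-10} is exactly $x\vee_1 y=y\vee_1 x$ and $x\vee_2 y=y\vee_2 x$ --- replaces $y\vee_1 x$ by $x\vee_1 y$, producing the first identity. The second identity comes the same way from the $\backsim$-half of Proposition \ref{ps-eq-30-20}$(6)$ together with $x\vee_2 y=y\vee_2 x$.

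The only real point requiring care --- and the step I would treat as the crux --- is the bookkeeping in these interchange arguments: one must track that $x\wedge y=y\wedge x$ so that the swapped instance of Proposition \ref{ps-eq-30-20}$(6)$ really rewrites to the asserted left-hand side, and that commutativity is applied to the correct one of $\vee_1,\vee_2$. Once the two identities are in place, the conclusion ${\mathcal DS}(A)={\mathcal DS}_c(A)$ follows immediately by combining the reverse inclusion just obtained with the trivial forward inclusion.
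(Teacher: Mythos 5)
Your proposal is correct and takes essentially the same route as the paper: both reduce $(cds_1)$ and $(cds_2)$ to the identities $x\thicksim (x\vee_1 y)=x\wedge y\thicksim y$ and $(x\vee_2 y)\backsim x=y\backsim x\wedge y$, valid in any commutative pseudo equality algebra, so that the required memberships in $D$ hold by literal equality of elements. The only cosmetic difference is that the paper obtains these identities by citing Theorem \ref{com-eq-70}$(b)$ (whose own proof is Proposition \ref{ps-eq-30} plus commutativity), whereas you re-derive them inline from Proposition \ref{ps-eq-30-20}$(6)$ and the $\vee_1,\vee_2$ formulation of commutativity --- the same underlying argument.
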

\begin{proof}
By Theorem \ref{com-eq-70}, $x\wedge y\thicksim y=x\thicksim ((x\wedge y\thicksim x)\backsim y)$ and 
$y\backsim x\wedge y=(y\thicksim (x\backsim x\wedge y))\backsim x$, for all $x, y\in A$. 
If $D\in {\mathcal DS}(A)$, then $x\wedge y\thicksim y\in D$ iff $x\thicksim ((x\wedge y\thicksim x)\backsim y)\in D$ 
and $y\backsim x\wedge y\in D$ iff $(y\thicksim (x\backsim x\wedge y))\backsim x\in D$, for all $x, y\in A$. 
Hence $D\in {\mathcal DS}_c(A)$. 
\end{proof}

\begin{theo} \label{com-ds-30} An upset $D$ of a pseudo equality algebra $A$ is a commutative deductive system of $A$ if and only if it satisfies the following conditions for all $x, y, z\in A$: \\
$(1)$ $1\in D;$ \\ 
$(2)$ $(x\wedge y\thicksim y)\wedge z\thicksim z, z\in D$ implies 
      $x\thicksim ((x\wedge y\thicksim x)\backsim y)\in D;$ \\
$(3)$ $z\backsim (y\backsim x\wedge y)\wedge z, z\in D$ implies 
      $(y\thicksim (x\backsim x\wedge y))\backsim x\in D$.
\end{theo}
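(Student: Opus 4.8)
The plan is to read conditions $(1)$--$(3)$ as an economical repackaging of the deductive-system axioms of Lemma \ref{ps-eq-160} together with the commutativity conditions $(cds_1)$, $(cds_2)$ of Definition \ref{com-ds-10}, the auxiliary variable $z$ serving only to fold the modus-ponens closure into $(2)$ and $(3)$. Throughout I use the priority convention, so that $x\wedge y\thicksim x$ means $(x\wedge y)\thicksim x$, and similarly for the other expressions.

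First I would treat the direct implication. Assuming $D$ is a commutative deductive system, $(1)$ is immediate. For $(2)$, set $w:=x\wedge y\thicksim y$ and suppose $(w\wedge z)\thicksim z\in D$ and $z\in D$; since $(w\wedge z)\thicksim z=(z\wedge w)\thicksim z$, condition $(DS_4)$ of Lemma \ref{ps-eq-160}$(1)$ (with $z$ in the role of the first variable and $w$ of the second) yields $w=x\wedge y\thicksim y\in D$, and then $(cds_1)$ gives $x\thicksim((x\wedge y\thicksim x)\backsim y)\in D$. For $(3)$ I would argue symmetrically: writing $v:=y\backsim x\wedge y$, the hypotheses $z\backsim(v\wedge z)\in D$ and $z\in D$ force $v\in D$ by $(DS^{'}_4)$ of Lemma \ref{ps-eq-160}$(2)$, whence $(cds_2)$ delivers the conclusion.

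For the converse I would exploit two specialisations of the auxiliary variable. Putting $z:=1$ in $(2)$ and using $(A_3)$ (so that $(w\wedge 1)\thicksim 1=w\thicksim 1=w$) collapses the hypothesis to $x\wedge y\thicksim y\in D$ and leaves exactly the conclusion of $(cds_1)$; the choice $z:=1$ in $(3)$, with $1\backsim v=v$ by $(A_3)$, recovers $(cds_2)$ in the same way. To obtain the deductive-system property, the decisive choice is $y:=1$ in $(2)$: on the hypothesis side $x\wedge 1\thicksim 1=x$, so the premise becomes $(x\wedge z)\thicksim z\in D$ and $z\in D$, while on the conclusion side $x\wedge 1\thicksim x=x\thicksim x=1$ by $(A_2)$, giving $x\thicksim((x\wedge 1\thicksim x)\backsim 1)=x\thicksim(1\backsim 1)=x\thicksim 1=x$ by $(A_2)$ and $(A_3)$. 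Thus $(2)$ specialises to ``$(x\wedge z)\thicksim z\in D$ and $z\in D$ imply $x\in D$'', which is $(DS_4)$; by Lemma \ref{ps-eq-160}$(1)$ the upset $D$ is then a deductive system, and together with $(cds_1)$ and $(cds_2)$ it is a commutative deductive system.

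The computations are all routine once the parsing is fixed; the only point demanding attention is the bookkeeping forced by the priority convention and the verification that the substitutions $z=1$ and $y=1$ simplify, via $(A_2)$ and $(A_3)$, precisely to $(cds_1)$/$(cds_2)$ and to $(DS_4)$. I do not expect a genuine obstacle beyond this, since $(2)$ and $(3)$ appear to have been designed exactly to encode these three facts at once.
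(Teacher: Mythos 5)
Your proposal is correct and follows essentially the same route as the paper's proof: both directions hinge on the same specializations ($z:=1$ in $(2)$ and $(3)$ to recover $(cds_1)$, $(cds_2)$, and $y:=1$ in $(2)$ to recover $(DS_4)$), combined with Lemma \ref{ps-eq-160}. The only cosmetic difference is that in the forward direction you invoke $(DS_4)$/$(DS_4')$ directly where the paper applies $(DS_3)$ and then the upset property, which amounts to the same thing by that lemma.
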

\begin{proof} 
Let $D\subseteq A$ be an upset of $A$ satisfying conditions $(1)$, $(2)$ and $(3)$. \\
By $(1)$, $1\in D$, that is $(DS_1)$. \\
Consider $x, y\in A$ such that $x, x\wedge y\thicksim x\in D$. 
We have $(y\wedge 1\thicksim 1)\wedge x\thicksim x=x\wedge y\thicksim x\in D$ and applying $(2)$ we get 
$y=y\thicksim ((y\wedge 1\thicksim y)\backsim 1)\in D$, that is $(DS_4)$. \\
Since $D$ is an upset, then $(DS_2)$ is satisfied. 
Hence $D\in {\mathcal DS}(A)$. \\ 
Let $x, y\in A$ such that $x\wedge y\thicksim y\in D$. \\
Since $(x\wedge y\thicksim y)\wedge 1\thicksim 1=x\wedge y\thicksim y\in D$ and $1\in D$, 
applying $(2)$ we get $(cds_1)$. \\
Similarly, if $y\backsim x\wedge y\in D$ we have $1\backsim (y\backsim x\wedge y)\wedge 1=y\backsim x\wedge y\in D$ 
and $1\in D$, from $(3)$ we get $(cds_2)$. 
Thus $D\in {\mathcal DS}_c(A)$. \\
Conversely, assume that $D\in {\mathcal DS}_c(A)$. Since $1\in D$, condition $(1)$ is satisfied. \\
Consider $x, y, z\in D$ such that $(x\wedge y\thicksim y)\wedge z\thicksim z, z\in D$, so 
$(x\wedge y\thicksim y)\wedge z\in D$. \\
Since $(x\wedge y\thicksim y)\wedge z\le x\wedge y\thicksim y$, we get $x\wedge y\thicksim y\in D$. 
Applying $(cds_1)$, it follows that $x\thicksim ((x\wedge y\thicksim x)\backsim y)\in D$, that is $(2)$. \\
Similarly from $z\backsim (y\backsim x\wedge y)\wedge z, z\in D$ we get $(3)$.  
\end{proof}

\begin{prop} \label{com-ds-40} A pseudo equality algebra $A$ is commutative if and only if 
$\{1\}\in {\mathcal DS}_c(A)$. 
\end{prop}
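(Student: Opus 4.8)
The plan is to prove the two implications separately, reducing each to results already in hand. The forward implication is immediate: if $A$ is commutative, then Proposition \ref{com-ds-20} gives ${\mathcal DS}(A)={\mathcal DS}_c(A)$, and since $\{1\}$ is always a deductive system of $A$ it is automatically a commutative one. All the substance lies in the converse, and there my strategy would be to verify condition $(c)$ of Theorem \ref{com-eq-70} — that $x\le y$ forces $y=(x\thicksim y)\backsim x=x\thicksim(y\backsim x)$ — and then invoke the implication $(c)\Rightarrow(a)$ of that theorem to conclude that $A$ is commutative.

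For the converse I would first specialize the defining conditions $(cds_1)$, $(cds_2)$ to the singleton $D=\{1\}$, so that each membership condition collapses to an equation with $1$. The key preliminary observation is that the premises are order statements: $x\wedge y\thicksim y$ equals $y\rightarrow x$, which is $1$ iff $y\le x$ by Proposition \ref{ps-eq-10}$(5)$, and likewise $y\backsim x\wedge y$ equals $y\rightsquigarrow x$, which is $1$ iff $y\le x$. Since I want to exploit a hypothesis of the form $x\le y$, I would apply $(cds_1)$ and $(cds_2)$ with the roles of $x$ and $y$ interchanged. Starting from $x\le y$, the interchanged premises $y\wedge x\thicksim x=1$ and $x\backsim y\wedge x=1$ both hold, and after simplifying via $x\wedge y=x$ the two conclusions read $y\thicksim((x\thicksim y)\backsim x)=1$ and $(x\thicksim(y\backsim x))\backsim y=1$.

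It then remains to turn these two equalities into the identities required by Theorem \ref{com-eq-70}$(c)$. Here I would recognise $(x\thicksim y)\backsim x$ as $y\vee_1 x$ and $x\thicksim(y\backsim x)$ as $y\vee_2 x$, again using $x\wedge y=x$ in the definitions of $\vee_1$ and $\vee_2$. Proposition \ref{ps-eq-30-20}$(4)$ then supplies $y\le y\vee_1 x$ and $y\le y\vee_2 x$, while the two conclusions combined with Proposition \ref{ps-eq-10}$(3)$ supply the reverse inequalities $y\vee_1 x\le y$ and $y\vee_2 x\le y$; antisymmetry yields $y\vee_1 x=y\vee_2 x=y$, which is precisely $(x\thicksim y)\backsim x=x\thicksim(y\backsim x)=y$, and Theorem \ref{com-eq-70} finishes the proof. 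I expect the main difficulty to be purely organisational rather than conceptual: correctly collapsing the singleton-membership conditions to equalities, tracking the interchange of $x$ and $y$ in the hypotheses, and matching each simplified conclusion to the right $\vee_1$ or $\vee_2$ expression so that Propositions \ref{ps-eq-30-20}$(4)$ and \ref{ps-eq-10}$(3)$ are applied in the correct direction.
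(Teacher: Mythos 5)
Your proposal is correct and follows essentially the same route as the paper: specialize $(cds_1)$, $(cds_2)$ to $D=\{1\}$ under an order hypothesis, obtain one inequality from Proposition \ref{ps-eq-10}$(3)$ and the reverse one from the upper-bound property of $\vee_1,\vee_2$ (the paper cites Proposition \ref{ps-eq-20}$(3)$, from which your Proposition \ref{ps-eq-30-20}$(4)$ is derived), and conclude via Theorem \ref{com-eq-70}$(c)\Rightarrow(a)$. The only differences are cosmetic — the paper works with $y\le x$ and applies the conditions with the variables as written, whereas you work with $x\le y$ and interchange the roles of $x$ and $y$.
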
 
\begin{proof}
If $A$ is commutative, then by Proposition \ref{com-ds-20}, $\{1\}\in {\mathcal DS}_c(A)$. \\
Conversely, assume that $\{1\}\in {\mathcal DS}_c(A)$ and let $x, y\in A$ such that $y\le x$, that is 
$x\wedge y\thicksim y=1\in \{1\}$. 
It follows that $x\thicksim ((x\wedge y\thicksim x)\backsim y)\in \{1\}$, hence  
$x\thicksim ((x\wedge y\thicksim x)\backsim y)=1$, that is $x\thicksim ((y\thicksim x)\backsim y)=1$. 
Applying Proposition \ref{ps-eq-10}$(3)$ we get $(y\thicksim x)\backsim y\le x$. 
Since by Proposition \ref{ps-eq-20}$(3)$, $x\le (y\thicksim x)\backsim y$, we get $x=(y\thicksim x)\backsim y$. 
Similarly $x=y\thicksim (x\backsim y)$. \\
Hence by Theorem \ref{com-eq-70}, $A$ is commutative. 
\end{proof}

\begin{cor} \label{com-ds-40-10} A pseudo equality algebra $A$ is commutative if and only if 
${\mathcal DS}(A)={\mathcal DS}_c(A)$. 
\end{cor}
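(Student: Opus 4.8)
The plan is to prove both implications using the already-established results, with Proposition \ref{com-ds-20} and Proposition \ref{com-ds-40} doing most of the work. The statement to be proved is the equivalence: a pseudo equality algebra $A$ is commutative if and only if ${\mathcal DS}(A)={\mathcal DS}_c(A)$. One direction is immediate: if $A$ is commutative, then Proposition \ref{com-ds-20} gives exactly ${\mathcal DS}(A)={\mathcal DS}_c(A)$, so there is nothing further to do.

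For the converse, assume ${\mathcal DS}(A)={\mathcal DS}_c(A)$. The key observation is that $\{1\}$ is always a deductive system of $A$, i.e. $\{1\}\in {\mathcal DS}(A)$. Indeed, $\{1\}$ trivially satisfies $(DS_1)$ since $1\in\{1\}$; it is an upset because $1$ is the top element, so $(DS_2)$ holds; and $(DS_3)$ holds vacuously since $x, y\thicksim x\in\{1\}$ forces $y\thicksim x=1$, whence $y\le x$ by Proposition \ref{ps-eq-10}$(3)$, and combined with the fact that one also needs $y=1$, the closure is immediate from the upset property. Thus $\{1\}\in {\mathcal DS}(A)={\mathcal DS}_c(A)$, so $\{1\}\in {\mathcal DS}_c(A)$.

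Now I would invoke Proposition \ref{com-ds-40}, which states that $A$ is commutative if and only if $\{1\}\in {\mathcal DS}_c(A)$. Since we have just shown $\{1\}\in {\mathcal DS}_c(A)$, it follows directly that $A$ is commutative. This completes the converse and hence the corollary.

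I do not expect any serious obstacle here, since this corollary is essentially a packaging of Propositions \ref{com-ds-20} and \ref{com-ds-40}. The only point requiring a moment's care is the observation that $\{1\}$ is always a deductive system (so that the hypothesis ${\mathcal DS}(A)={\mathcal DS}_c(A)$ can be specialized to $\{1\}$); this is a standard fact noted earlier in the text, where it is stated that $\{1\}, A\subseteq {\mathcal DS}(A)$. With that in hand, the proof is a two-line deduction chaining the two prior propositions.
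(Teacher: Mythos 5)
Your proposal is correct and follows exactly the route the paper intends: the corollary is an immediate consequence of Proposition \ref{com-ds-20} (forward direction) and Proposition \ref{com-ds-40} (converse, obtained by specializing the hypothesis ${\mathcal DS}(A)={\mathcal DS}_c(A)$ to the standard fact $\{1\}\in {\mathcal DS}(A)$). One minor slip in your side-verification that $\{1\}$ satisfies $(DS_3)$: Proposition \ref{ps-eq-10}$(3)$ yields $x\le y$ (not $y\le x$) from $y\thicksim x=1$, so with $x=1$ this forces $y=1$; even more directly, $(A_3)$ gives $y\thicksim x=y\thicksim 1=y=1$ --- but this does not affect the argument, since $\{1\}\in{\mathcal DS}(A)$ is the standard fact you also correctly cite from the paper.
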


\begin{theo} \label{com-ds-50} Let $A$ be a pseudo equality algebra and $H\in {\mathcal DS}_n(A)$. 
Then $H\in {\mathcal DS}_c(A)$ if and only if $A/H$ is a commutative pseudo equality algebra.
\end{theo}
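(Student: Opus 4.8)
The plan is to reduce the whole statement to Proposition \ref{com-ds-40} applied to the quotient algebra. Since $H\in {\mathcal DS}_n(A)$, the quotient $(A/\Theta_H,\wedge,\thicksim,\backsim,1/\Theta_H)$ is a pseudo equality algebra whose operations are computed on representatives, that is $(x/\Theta_H)\wedge(y/\Theta_H)=(x\wedge y)/\Theta_H$, $(x/\Theta_H)\thicksim(y/\Theta_H)=(x\thicksim y)/\Theta_H$ and $(x/\Theta_H)\backsim(y/\Theta_H)=(x\backsim y)/\Theta_H$. By Proposition \ref{com-ds-40} applied to $A/\Theta_H$, the quotient is commutative if and only if $\{1/\Theta_H\}\in {\mathcal DS}_c(A/\Theta_H)$. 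So it suffices to establish the equivalence $H\in {\mathcal DS}_c(A)$ if and only if $\{1/\Theta_H\}\in {\mathcal DS}_c(A/\Theta_H)$, and the theorem will follow.

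First I would record the dictionary between membership in $H$ and collapsing to the top class: $z/\Theta_H=1/\Theta_H$ means $z\,\Theta_H\,1$, i.e. $z\thicksim 1\in H$, and by $(A_3)$ we have $z\thicksim 1=z$, so $z/\Theta_H=1/\Theta_H$ holds exactly when $z\in H$. In particular the class $1/\Theta_H$ coincides, as a subset of $A$, with $H$ itself, and an element $u/\Theta_H$ lies in the trivial deductive system $\{1/\Theta_H\}$ if and only if $u\in H$.

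Next I would translate the two conditions defining commutativity of $\{1/\Theta_H\}$ through this dictionary. Because the operations pass to representatives, for classes $x/\Theta_H,y/\Theta_H$ the term $x\wedge y\thicksim y$ is represented by $(x\wedge y\thicksim y)/\Theta_H$, and the term $x\thicksim ((x\wedge y\thicksim x)\backsim y)$ by $(x\thicksim ((x\wedge y\thicksim x)\backsim y))/\Theta_H$. Hence the hypothesis of $(cds_1)$ for $\{1/\Theta_H\}$ in $A/\Theta_H$ says $x\wedge y\thicksim y\in H$ and its conclusion says $x\thicksim ((x\wedge y\thicksim x)\backsim y)\in H$; these are verbatim the hypothesis and conclusion of $(cds_1)$ for $H$ in $A$. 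The same computation, carried out with the $\backsim$-operation on the terms $y\backsim x\wedge y$ and $(y\thicksim (x\backsim x\wedge y))\backsim x$, shows that $(cds_2)$ for $\{1/\Theta_H\}$ is exactly $(cds_2)$ for $H$. Since $\{1/\Theta_H\}$ is always a deductive system of $A/\Theta_H$ and $H$ is a deductive system of $A$, the two commutativity requirements coincide, giving the desired equivalence as a single chain of iff's.

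The one point deserving care — and the only genuine obstacle — is the verification that $z/\Theta_H=1/\Theta_H$ if and only if $z\in H$, together with the fact that the compound terms above really descend to representatives; both rest on $H$ being a \emph{normal} deductive system, so that $\Theta_H$ is a congruence and $A/\Theta_H$ is a well-defined pseudo equality algebra. Once this dictionary is installed, the matching of the two pairs of conditions $(cds_1),(cds_2)$ is a direct reading, and combining it with Proposition \ref{com-ds-40} for $A/\Theta_H$ proves both implications simultaneously.
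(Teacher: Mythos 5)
Your proposal is correct and takes essentially the same route as the paper's own proof: both reduce the theorem to Proposition \ref{com-ds-40} applied to the quotient $A/\Theta_H$, using that the operations of $A/\Theta_H$ are computed on representatives and that a class equals $1/\Theta_H$ exactly when some (hence any) representative lies in $H$, so that $(cds_1)$ and $(cds_2)$ for $\{1/\Theta_H\}$ in $A/\Theta_H$ are verbatim $(cds_1)$ and $(cds_2)$ for $H$ in $A$. The only cosmetic difference is that the paper verifies the two implications separately while you package the translation as a single chain of equivalences, relying on the same identification $H=[1]_{\Theta_H}$ that the paper uses implicitly.
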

\begin{proof}
Assume $H\in {\mathcal DS}_c(A)$ and let $x, y\in A$ such that 
$[x]_{\Theta_H}\wedge [y]_{\Theta_H}\thicksim [y]_{\Theta_H}\in \{[1]_{\Theta_H}\}$, that is 
$[x\wedge y\thicksim y]_{\Theta_H}=[1]_{\Theta_H}$, so $x\wedge y\thicksim y\in H$. 
Since $H$ is commutative, we get $x\thicksim ((x\wedge y\thicksim x)\backsim y)\in H$, so that 
$[x]_{\Theta_H}\thicksim (([x]_{\Theta_H}\wedge [y]_{\Theta_H}\thicksim [x]_{\Theta_H})\backsim [y]_{\Theta_H})= 
[x\thicksim ((x\wedge y\thicksim x)\backsim y)]_{\Theta_H}=[1]_{\Theta_H}\in \{[1]_{\Theta_H}\}$. 
Similarly from $[y]_{\Theta_H}\backsim [x]_{\Theta_H}\wedge [y]_{\Theta_H}\in \{[1]_{\Theta_H}\}$ we get 
$([y]_{\Theta_H}\thicksim ([x]_{\Theta_H}\wedge [y]_{\Theta_H}))\backsim [x]_{\Theta_H}\in \{[1]_{\Theta_H}\}$. 
Hence $\{[1]_{\Theta_H}\}\in {\mathcal DS}_c(A/H)$, so by Proposition \ref{com-ds-40}, $A/H$ is a commutative 
pseudo equality algebra. \\
Conversely, if $A/H$ is a commutative pseudo equality algebra, then by Proposition \ref{com-ds-40}, 
$\{[1]_{\Theta_H}\}\in {\mathcal DS}_c(A/H)$.  
If $x\wedge y\thicksim y\in H=[1]_{\Theta_H}$, we have 
$[x]_{\Theta_H}\wedge [y]_{\Theta_H}\thicksim [y]_{\Theta_H}\in \{[1]_{\Theta_H}\}$. 
Since $\{[1]_{\Theta_H}\}\in {\mathcal DS}_c(A/H)$, we get 
$[x]_{\Theta_H}\thicksim (([x]_{\Theta_H}\wedge [y]_{\Theta_H}\thicksim [x]_{\Theta_H})\backsim [y]_{\Theta_H}\in 
\{[1]_{\Theta_H}\}$, so $[x\thicksim ((x\wedge y\thicksim x)\backsim y)]_{\Theta_H}=[1]_{\Theta_H}$, that is 
$x\thicksim ((x\wedge y\thicksim x)\backsim y)\in H$. \\
Similarly from $y\backsim x\wedge y\in H$ we get $y\thicksim (x\backsim x\wedge y)\backsim x)\in H$. 
Hence $H\in {\mathcal DS}_c(A)$. 
\end{proof}

\begin{prop} \label{com-ds-60} Let $A$ be a pseudo equality algebra and $D\in {\mathcal DS}_c(A)$. 
Then the following hold for all $x, y\in A$: \\
$(1)$ $x\thicksim ((y\thicksim x)\backsim y), (y\thicksim (x\backsim y))\backsim x \in D$, whenever $y\le x;$ \\
$(2)$ $((x\thicksim y)\backsim x)\thicksim ((y\thicksim ((x\thicksim y)\backsim x))\backsim y),  
       (y\thicksim (x\thicksim (y\backsim x)))\backsim (x\thicksim (y\backsim x))\in D$. 
\end{prop}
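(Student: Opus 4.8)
The plan is to establish $(1)$ directly from the two defining conditions $(cds_1)$ and $(cds_2)$ of a commutative deductive system, and then to obtain $(2)$ by applying $(1)$ to two carefully chosen auxiliary elements. For $(1)$, the key observation is that $y\le x$ forces $x\wedge y=y$, so that the hypotheses of $(cds_1)$ and $(cds_2)$ collapse to the trivial membership $1\in D$. Concretely, I would first note $x\wedge y\thicksim y=y\thicksim y=1\in D$ by $(A_2)$ and $(DS_1)$; this triggers $(cds_1)$, and since $x\wedge y\thicksim x=y\thicksim x$ its conclusion is exactly $x\thicksim ((y\thicksim x)\backsim y)\in D$. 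Symmetrically, $y\backsim x\wedge y=y\backsim y=1\in D$ triggers $(cds_2)$, whose conclusion becomes $(y\thicksim (x\backsim y))\backsim x\in D$ once one uses $x\backsim x\wedge y=x\backsim y$. This settles $(1)$.

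For $(2)$, the idea is to feed the elements $u:=(x\thicksim y)\backsim x$ and $v:=x\thicksim (y\backsim x)$ back into $(1)$. First I would record the order relations that make this legitimate: interchanging $x$ and $y$ in Proposition \ref{ps-eq-10}$(2)$ gives $y\le ((x\thicksim y)\backsim x)\wedge (x\thicksim (y\backsim x))$, whence $y\le u$ and $y\le v$. With $y\le u$ in hand, the first membership of $(1)$, applied to the pair $(u,y)$, produces $u\thicksim ((y\thicksim u)\backsim y)\in D$, which is the first assertion of $(2)$ after unfolding $u$. Likewise, with $y\le v$, the second membership of $(1)$ applied to $(v,y)$ yields the second assertion of $(2)$ after unfolding $v$.

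The computations are entirely routine once the reduction is set up, so the only real content is structural: one must recognise that the nested terms in $(2)$ are nothing but the outputs of $(1)$ for the substitutions $x\mapsto u,\ y\mapsto y$ and $x\mapsto v,\ y\mapsto y$, and that the two order relations needed to license these applications are precisely the two halves of the $x,y$-interchanged form of Proposition \ref{ps-eq-10}$(2)$. I expect the main obstacle to be exactly this bookkeeping step of matching the target expressions against the conclusions of $(1)$; no genuinely new manipulation of the equality operations $\thicksim$ and $\backsim$ is required beyond what $(1)$ already supplies.
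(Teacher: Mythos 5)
Your proposal is correct and is essentially identical to the paper's own proof: part $(1)$ is obtained exactly as you describe, by collapsing $x\wedge y$ to $y$ so that the hypotheses of $(cds_1)$ and $(cds_2)$ become $1\in D$ and their conclusions become the two stated memberships, and part $(2)$ follows by applying $(1)$ to the pairs $((x\thicksim y)\backsim x,\, y)$ and $(x\thicksim (y\backsim x),\, y)$, the inequalities $y\le (x\thicksim y)\backsim x$ and $y\le x\thicksim (y\backsim x)$ being Proposition \ref{ps-eq-10}$(2)$ with $x$ and $y$ interchanged. One caveat that applies equally to the paper's proof: this application of $(1)$ literally yields $(y\thicksim ((x\thicksim (y\backsim x))\backsim y))\backsim (x\thicksim (y\backsim x))\in D$ as the second membership, whereas the printed second expression in $(2)$, namely $(y\thicksim (x\thicksim (y\backsim x)))\backsim (x\thicksim (y\backsim x))$, omits the inner $\backsim y$ and can even fail (it equals $a\notin \{1\}$ for $x=0$, $y=a$, $D=\{1\}$ in the commutative algebra of Example \ref{ps-ex-50}), so the mismatch is a typographical slip in the statement itself rather than a gap in your argument.
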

\begin{proof}
$(1)$ Since $y\le x$, then $x\wedge y\thicksim y=y\backsim x\wedge y=1\in D$, hence 
$x\thicksim ((y\thicksim x)\backsim y)\in D$ and $(y\thicksim (x\backsim y))\backsim x \in D$. \\
$(2)$ It follows by $(1)$, since by Proposition \ref{ps-eq-10} we have $y\le (x\thicksim y)\backsim x)$ and 
$y\le x \thicksim (y\backsim x)$.  
\end{proof}

\begin{cor} \label{com-ds-70} Let $A$ be a commutative pseudo equality algebra and $x, y\in A$ such that $y\le x$. 
Then $x\thicksim ((y\thicksim x)\backsim y)=(y\thicksim (x\backsim y))\backsim x=1$. 
\end{cor}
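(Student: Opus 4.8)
The plan is to read this corollary as the specialization of Proposition \ref{com-ds-60}$(1)$ to the smallest deductive system $D=\{1\}$, so that the two memberships asserted there collapse to the two equalities claimed here. The only extra ingredient required is that $\{1\}$ is itself commutative as a deductive system, and this is supplied by Proposition \ref{com-ds-40}.

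Concretely, first I would apply Proposition \ref{com-ds-40}: since $A$ is commutative by hypothesis, we obtain $\{1\}\in {\mathcal DS}_c(A)$. Next, with this choice $D:=\{1\}$ and the given pair $x, y$ satisfying $y\le x$, I would invoke Proposition \ref{com-ds-60}$(1)$ to conclude that $x\thicksim ((y\thicksim x)\backsim y)\in \{1\}$ and $(y\thicksim (x\backsim y))\backsim x\in \{1\}$. Since $\{1\}$ has $1$ as its only element, both of these elements must equal $1$, which is precisely the assertion of the corollary.

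I expect no genuine obstacle, as the statement is an immediate consequence of the two cited propositions; the only verification needed is that the hypotheses of Proposition \ref{com-ds-60}$(1)$ are met, namely $y\le x$ and $\{1\}\in {\mathcal DS}_c(A)$, and both hold at once. Should a reader prefer a self-contained derivation that bypasses Proposition \ref{com-ds-60}, one can unwind the definition of a commutative deductive system directly: from $y\le x$ one has $x\wedge y=y$, whence $x\wedge y\thicksim y=y\thicksim y=1$ and $y\backsim x\wedge y=y\backsim y=1$ by $(A_2)$. Feeding these into conditions $(cds_1)$ and $(cds_2)$ for the commutative deductive system $D=\{1\}$, and using $x\wedge y\thicksim x=y\thicksim x$ together with $x\backsim x\wedge y=x\backsim y$ (again since $x\wedge y=y$), reproduces the same two equalities $x\thicksim ((y\thicksim x)\backsim y)=1$ and $(y\thicksim (x\backsim y))\backsim x=1$. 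Either route finishes the proof.
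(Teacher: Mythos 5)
Your proposal is correct and follows exactly the paper's own argument: the paper likewise deduces $\{1\}\in {\mathcal DS}_c(A)$ from commutativity (Proposition \ref{com-ds-40}) and then applies Proposition \ref{com-ds-60} with $D=\{1\}$ to obtain both memberships, hence both equalities. Your optional self-contained unwinding of $(cds_1)$ and $(cds_2)$ is also sound, but the primary route is the same as the paper's.
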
 
\begin{proof}
Since $A$ is commutative, we have $\{1\}\in {\mathcal DS}_c(A)$ and by Proposition \ref{com-ds-60} we get 
$x\thicksim ((y\thicksim x)\backsim y), (y\thicksim (x\backsim y))\backsim x \in \{1\}$, that is 
$x\thicksim ((y\thicksim x)\backsim y)=(y\thicksim (x\backsim y))\backsim x=1$.
\end{proof}

\begin{cor} \label{com-ds-80} Let $A$ be a commutative pseudo equality algebra and $x, y\in A$ such that $y\le x$. 
Then $x=(y\thicksim x)\backsim y=y\thicksim (x\backsim y)$. 
\end{cor}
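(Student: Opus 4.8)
The plan is to establish each of the two claimed equalities by antisymmetry of $\le$, sandwiching the candidate term between $x$ and itself. Note first that the statement is in fact the instance of Theorem \ref{com-eq-70}$(c)$ obtained by interchanging the names of $x$ and $y$; however, since Corollary \ref{com-ds-70} is already available and records exactly the two facts we need, I would derive Corollary \ref{com-ds-80} directly from it. Throughout, the hypothesis $y\le x$ gives $x\wedge y=y$, which I will use to rewrite the terms appearing in Corollary \ref{com-ds-70} and Proposition \ref{ps-eq-20}$(3)$ into the form stated here.

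First I would treat the equality $x=(y\thicksim x)\backsim y$. By Corollary \ref{com-ds-70} (applied to the commutative algebra $A$ and the pair $y\le x$) we have $x\thicksim ((y\thicksim x)\backsim y)=1$. Applying Proposition \ref{ps-eq-10}$(3)$ to this identity yields $(y\thicksim x)\backsim y\le x$. For the reverse inequality I would invoke Proposition \ref{ps-eq-20}$(3)$, whose first component gives $x\le (x\wedge y\thicksim x)\backsim y$; since $x\wedge y=y$, this reads $x\le (y\thicksim x)\backsim y$. Antisymmetry then forces $x=(y\thicksim x)\backsim y$.

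The second equality $x=y\thicksim (x\backsim y)$ is entirely symmetric. From Corollary \ref{com-ds-70} we also have $(y\thicksim (x\backsim y))\backsim x=1$, and Proposition \ref{ps-eq-10}$(3)$ (using the $\backsim$-version, $a\backsim b=1$ implies $a\le b$) gives $y\thicksim (x\backsim y)\le x$. The second component of Proposition \ref{ps-eq-20}$(3)$ gives $x\le y\thicksim (x\backsim x\wedge y)=y\thicksim (x\backsim y)$, again using $x\wedge y=y$. Antisymmetry yields $x=y\thicksim (x\backsim y)$, completing the proof.

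I do not expect any genuine obstacle here, as the argument is a routine two-sided inequality combined with antisymmetry. The only point requiring care is matching the direction of Proposition \ref{ps-eq-10}$(3)$ to each of the two identities coming out of Corollary \ref{com-ds-70} (reading $x\thicksim v=1$ as $v\le x$ and $w\backsim x=1$ as $w\le x$), and remembering to substitute $x\wedge y=y$ when aligning Proposition \ref{ps-eq-20}$(3)$ with the asserted terms.
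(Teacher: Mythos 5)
Your proof is correct and follows essentially the same route as the paper: both derive the identities from Corollary \ref{com-ds-70}, turn them into the inequalities $(y\thicksim x)\backsim y\le x$ and $y\thicksim (x\backsim y)\le x$ via Proposition \ref{ps-eq-10}$(3)$ (the paper cites $(4)$, apparently a typo for $(3)$), and finish by antisymmetry. The only cosmetic difference is in the reverse inequalities: the paper invokes Proposition \ref{ps-eq-10}$(2)$ directly, whereas you use Proposition \ref{ps-eq-20}$(3)$ together with the substitution $x\wedge y=y$, which amounts to the same thing.
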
 
\begin{proof}
By Corollary \ref{com-ds-70} and Proposition \ref{ps-eq-10}$(4)$ we get $(y\thicksim x)\backsim y\le x$ and 
$y\thicksim (x\backsim y)\le x$. 
Applying Proposition \ref{ps-eq-10}$(2)$ we have $x\le(y\thicksim x)\backsim y$ and $x\le y\thicksim (x\backsim y)$. 
Hence $x=(y\thicksim x)\backsim y=y\thicksim (x\backsim y)$.
\end{proof}

\begin{cor} \label{com-ds-90} Let $A$ be a commutative pseudo equality algebra and $x, y\in A$ such that 
$x\thicksim y=1$ or $y\backsim x=1$. Then $x=(y\thicksim x)\backsim y=y\thicksim (x\backsim y)$. 
\end{cor}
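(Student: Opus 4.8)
The plan is to reduce this statement directly to the previous Corollary~\ref{com-ds-80}, whose conclusion $x=(y\thicksim x)\backsim y=y\thicksim (x\backsim y)$ is word-for-word the conclusion we want, but whose hypothesis is the cleaner order relation $y\le x$. Thus the entire content of the corollary lies in deducing $y\le x$ from the assumption that $x\thicksim y=1$ or $y\backsim x=1$.

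To obtain $y\le x$, I would invoke Proposition~\ref{ps-eq-10}$(3)$, which states that $a\backsim b=1$ or $b\thicksim a=1$ implies $a\le b$ (I rename the bound variables to avoid a clash). In the first case $x\thicksim y=1$, I set $b:=x$ and $a:=y$, so that the hypothesis reads $b\thicksim a=1$ and the proposition yields $a\le b$, i.e. $y\le x$. In the second case $y\backsim x=1$, I set $a:=y$ and $b:=x$, so that the hypothesis reads $a\backsim b=1$ and again the proposition yields $y\le x$. Either way the order relation $y\le x$ holds.

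With $y\le x$ established, the conclusion is immediate from Corollary~\ref{com-ds-80} applied to the same pair $x,y$. I do not anticipate a genuine obstacle here; the only point requiring care is matching the two disjuncts of the hypothesis to the correct instantiation of Proposition~\ref{ps-eq-10}$(3)$, since the roles of the two equality operations $\thicksim$ and $\backsim$ are asymmetric in a pseudo equality algebra that is not symmetric, and a careless substitution would produce $x\le y$ rather than the needed $y\le x$.
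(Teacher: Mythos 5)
Your proposal is correct and is exactly the paper's proof: the paper also deduces $y\le x$ from Proposition \ref{ps-eq-10}$(3)$ and then applies Corollary \ref{com-ds-80}, merely stating this in one line where you spell out the instantiations. Your care in matching the disjuncts to the asymmetric roles of $\thicksim$ and $\backsim$ is precisely the (small) content of the argument.
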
 
\begin{proof}
It is a consequence of Proposition \ref{ps-eq-10}$(3)$ and Corollary \ref{com-ds-80}.
\end{proof}

$\vspace*{5mm}$

\section{Measures and internal states on pseudo equality algebras}

As application of the results proved in the previous sections, we define and study the measures and 
measure-morphisms on pseudo equality algebras, and we prove new properties of state pseudo equality algebras. 
We show that any measure-morphism on a pseudo equality algebra is also a measure, and that the kernel of a 
measure is a commutative deductive system. We prove that the quotient pseudo equality algebra over the kernel 
of a measure is a commutative pseudo equality algebra. 
It is also proved that a pseudo equality algebra possessing an order-determining system is commutative. 
Other main results consist of proving that the measures on a pseudo equality algebra and the and measure-morphisms on 
a linearly ordered pseudo equality algebra coincide with those on its corresponding pseudo BCK(pC)-meet semilattice.  
We prove that the two types of internal states on a pseudo equality algebra coincide if and 
only if it is a commutative pseudo equality algebra.
Moreover if the pseudo equality algebra is symmetric and linearly ordered, then the state-morphisms coincide 
with the two types of internal states. 

\begin{Def} \label{ms-eq-10} Let $(A, \wedge, \thicksim, \backsim, 1)$ be a pseudo equality algebra. 
A map $m:A\longrightarrow [0, +\infty)$ is said to be a \emph{measure} if 
$m(y\thicksim x)=m(x\backsim y)=m(y)-m(x)$, whenever $y\le x$. \\
If $m(x\wedge y\thicksim x)=m(x\backsim x\wedge y)=\max\{0,m(y)-m(x)\}$, then 
$m$ is said to be a \emph{measure-morphism}.
\end{Def}

Denote $\mathcal{M}_{EQA}(A)$ the set of all measures and $\mathcal{MM}_{EQA}(A)$ the set of all 
measure-morphisms on a pseudo equality algebra $A$. 

\begin{prop} \label{ms-eq-20} Let $m$ be a measure on a pseudo equality algebra $A$. Then the following hold 
for all $x, y\in A$:\\
$(1)$ $m(1)=0;$ \\
$(2)$ $m(x)\ge m(y)$ whenever $x\le y;$ \\
$(3)$ $m(x\vee_1 y)=m(y\vee_1 x)$ and $m(x\vee_2 y)=m(y\vee_2 x);$ \\
$(4)$ $m(x\vee_1 y)=m(y\vee_2 x);$ \\ 
$(5)$ $m(x\wedge y\thicksim x)=m(x\backsim x\wedge y)$.
\end{prop}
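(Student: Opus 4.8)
The plan is to prove the five claims in order, since each one feeds into the next. The definition of a measure gives us the single key relation $m(y\thicksim x)=m(x\backsim y)=m(y)-m(x)$ whenever $y\le x$, and essentially everything should follow by choosing the right pair of comparable elements and invoking the order-theoretic facts from Proposition \ref{ps-eq-30-20}.

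For $(1)$, I would apply the defining relation with $x=y=1$, noting $1\le 1$, so that $m(1\thicksim 1)=m(1)-m(1)=0$; since $1\thicksim 1=1$ by $(A_2)$, this gives $m(1)=0$ directly. For $(2)$, given $x\le y$, the defining relation applied to the comparable pair (with the roles arranged so the smaller element is subtracted) yields $m(x\thicksim y)=m(y)-m(x)$ wait---here one must be careful about which element plays which role, so I would apply the definition to $x\le y$ to obtain $m(x\thicksim y)=m(x)-m(y)$, and since $m$ takes values in $[0,+\infty)$ this forces $m(x)-m(y)\ge 0$, i.e. $m(x)\ge m(y)$.

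For $(3)$, $(4)$ and $(5)$ the strategy is to exploit Proposition \ref{ps-eq-30-20}$(4)$, which says $x,y\le x\vee_1 y$ and $x,y\le x\vee_2 y$, so that each of the four elements $x\vee_1 y$, $y\vee_1 x$, $x\vee_2 y$, $y\vee_2 x$ sits above both $x$ and $y$. The idea is to compute the measure of each such join-type element in terms of $m(x)$ and $m(y)$ using part $(2)$ together with the subtraction formula. Concretely, I expect to use Proposition \ref{ps-eq-30-20}$(6)$, which rewrites $x\wedge y\thicksim x=y\thicksim x\vee_1 y$ and $x\backsim x\wedge y=x\vee_2 y\backsim y$; since $y\le x\vee_1 y$ and $y\le x\vee_2 y$, the measure definition gives $m(y\thicksim x\vee_1 y)=m(x\vee_1 y)-m(y)$ and similarly $m(x\vee_2 y\backsim y)=m(x\vee_2 y)-m(y)$. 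Running these computations symmetrically in $x$ and $y$ should collapse the differences and yield the claimed equalities.

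The main obstacle I anticipate is bookkeeping rather than genuine difficulty: the operations $\thicksim$ and $\backsim$ are directionally sensitive, so I must track exactly which argument is the larger element in each application of the defining relation, and correctly match Proposition \ref{ps-eq-30-20}$(6)$ against the measure formula $m(y\thicksim x)=m(x\backsim y)=m(y)-m(x)$. I expect $(5)$ to be the cleanest: since $x\wedge y\thicksim x=y\thicksim x\vee_1 y$ and $x\backsim x\wedge y=x\vee_2 y\backsim y$, and both $x\vee_1 y$ and $x\vee_2 y$ dominate $x$ and $y$ with the same measure value once $(3)$ and $(4)$ are established, the two sides of $(5)$ reduce to the same difference; indeed $(5)$ should fall out as an immediate corollary of $(4)$ once the join-type elements are identified via $(6)$. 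The delicate point is ensuring that $m(x\vee_1 y)=m(x\vee_2 y)$, which is precisely what $(4)$ asserts, so the logical order---proving $(3)$ and $(4)$ first and then deducing $(5)$---must be respected.
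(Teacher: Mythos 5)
Your treatment of $(1)$ and $(2)$ is correct and identical to the paper's. For $(3)$--$(5)$ your route is genuinely different from the paper's: the paper first settles the comparable case $x\le y$ (where $x\vee_1 y=y$ by Proposition~\ref{ps-eq-30-20}$(2)$) and then handles arbitrary $x,y$ by a cycle of inequalities, using monotonicity of $\vee_1,\vee_2$ in the first argument (Proposition~\ref{ps-eq-30-20}$(5)$) together with the antitonicity of $m$; you instead aim at a direct closed-form computation of the measure of each join. That route does work and is arguably cleaner, but as written your plan has one sign slip and one genuine gap.

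The sign slip: since $y\le x\vee_1 y$, the axiom gives $m(y\thicksim x\vee_1 y)=m(y)-m(x\vee_1 y)$, not $m(x\vee_1 y)-m(y)$; your version, being nonnegative, would force $m(x\vee_1 y)\ge m(y)$, which together with $(2)$ collapses to $m(x\vee_1 y)=m(y)$ for all $x,y$ --- false in general. The gap: from Proposition~\ref{ps-eq-30-20}$(6)$ and the axiom you only obtain $m(x\vee_1 y)=m(y)-m(x\wedge y\thicksim x)$ and, symmetrically, $m(y\vee_1 x)=m(x)-m(x\wedge y\thicksim y)$, and these two right-hand sides are not visibly equal; the ``symmetric collapse'' you invoke does not follow from the ingredients you list. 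The missing step is a second application of the measure axiom, this time to the comparable pairs $x\wedge y\le x$ and $x\wedge y\le y$: it yields $m(x\wedge y\thicksim x)=m(x\backsim x\wedge y)=m(x\wedge y)-m(x)$ and $m(x\wedge y\thicksim y)=m(y\backsim x\wedge y)=m(x\wedge y)-m(y)$. With this, $m(x\vee_1 y)=m(x)+m(y)-m(x\wedge y)$, and the same computation gives this identical value for $m(y\vee_1 x)$, $m(x\vee_2 y)$ and $m(y\vee_2 x)$, proving $(3)$ and $(4)$ at one stroke; note the formula necessarily involves $m(x\wedge y)$, not just $m(x)$ and $m(y)$ as you assert. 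Finally, once you have the identity $m(x\wedge y\thicksim x)=m(x\backsim x\wedge y)$ above, $(5)$ is immediate from the definition alone --- both expressions are the axiom's two forms for the pair $x\wedge y\le x$ --- so it needs neither $(3)$, $(4)$, nor Proposition~\ref{ps-eq-30} (a shortcut that the paper's own proof also misses).
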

\begin{proof}
$(1)$ From $1\le 1$ we have $m(1)=m(1\thicksim 1)=m(1)-m(1)=0$. \\
$(2)$ Since $x\le y$, it follows that $m(x)-m(y)=m(x\thicksim y)\ge 0$, so that $m(x)\ge m(y)$. \\
$(3)$ First, let $x\le y$. By Proposition \ref{ps-eq-30-20}$(2)$ we have $m(x\vee_1 y)=m(y)$. \\
Since $x\le x\wedge y\thicksim y$ and $x\wedge y\le y$, we have: \\
$\hspace*{1cm}$ 
$m(y\vee_1 x)=m((x\wedge y\thicksim y)\backsim x)=m(x)-m(x\wedge y\thicksim y)$ \\
$\hspace*{2.8cm}$
$=m(x)-m(x\wedge y)+m(y)=m(y)$, \\
hence $m(x\vee_1 y)=m(y\vee_1 x)$. \\
Let now $x, y\in A$ be arbitrary. Since by Proposition \ref{ps-eq-30-20}$(5)$ 
$y\le x\vee_1 y$ implies $y\vee_1 x\le (x\vee_1 y)\vee_1 x$ and 
$x\le y\vee_1 x$ implies $x\vee_1 y\le (y\vee_1 x)\vee_1 y=m(x\vee_1 y)$, 
applying the first part of the proof and the properties of measures we get: \\ 
$\hspace*{1cm}$
$m(x\vee_1 y)=m(x\vee_1 (x\vee_1 y))=m((x\vee_1 y)\vee_1 x)\le m(y\vee_1 x)$ \\
$\hspace*{2.8cm}$
$=m(y\vee_1 (y\vee_1 x))=m((y\vee_1 x)\vee_1 y)\le m(x\vee_1 y)$, \\
thus $m(x\vee_1 y)=m(y\vee_1 x)$. Similarly $m(x\vee_2 y)=m(y\vee_2 x)$. \\
$(4)$ First, let $x\le y$. Applying Proposition \ref{ps-eq-30-20}$(2)$ we get $m(x\vee_1 y)=m(x\vee_2 y)=m(y)$. \\ 
Let now $x, y\in A$ be arbitrary. By $(3)$ we have: \\
$\hspace*{1cm}$ 
$m(x\vee_1 y)=m(x\vee_1 (x\vee_1 y))=m(x\vee_2 (x\vee_1 y))=m((x\vee_1 y)\vee_2 x)$ \\
$\hspace*{2.8cm}$
$\le m(y\vee_2 x)=m(x\vee_2 y)=m(x\vee_2 (x\vee_2 y))=m((x\vee_2 y)\vee_2 x)$ \\
$\hspace*{2.8cm}$
$=m((x\vee_2 y)\vee_1 x)\le m(y\vee_1 x)=m(x\vee_1 y)$. \\
Thus $m(x\vee_1 y)=m(x\vee_2 y)$. \\
$(5)$ By Propositions \ref{ps-eq-30} and \ref{ps-eq-20}$(4)$ we get: \\
$\hspace*{1cm}$
$m(x\wedge y\thicksim x)=m(y\thicksim ((x\wedge y\thicksim x)\backsim y)=m(y)-m((x\wedge y\thicksim x)\backsim y)$ \\
$\hspace*{3.3cm}$
$=m(y)-m(x\vee_1 y)=m(y)-m(x\vee_2 y)$ \\
$\hspace*{3.3cm}$
$=m(x\vee_2 y\backsim y)=m(x\backsim x\wedge y)$.  
\end{proof}

For $m\in \mathcal{M}_{EQA}(A)$, $\Ker(m)=\{x\in A\mid m(x)=0\}$ is called the \emph{kernel} of $m$. \\
By Proposition \ref{ms-eq-20}$(1)$, $1\in \Ker(m)$. 

\begin{prop} \label{ms-eq-30} Let $A$ be a pseudo equality algebra and let $m$ be a measure on $A$. Then: \\
$(1)$ if $x, y\in A$ such that $y\le x$, then $m(x\vee_1 y)=m(x\vee_2 y)=m(x);$ \\
$(2)$ $\Ker(m)\in {\mathcal DS}(A);$ \\
$(3)$ if $A$ is invariant, then $\Ker(m)\in {\mathcal DS}_n(A)$. 
\end{prop}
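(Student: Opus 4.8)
The plan is to treat the three parts in order, drawing throughout on the defining property of a measure together with the facts already recorded in Proposition \ref{ms-eq-20} and Proposition \ref{ps-eq-30-20}. For part $(1)$, I would observe that when $y\le x$, Proposition \ref{ps-eq-30-20}$(2)$ gives $y\vee_1 x=y\vee_2 x=x$. Combining this with the symmetry relations $m(x\vee_1 y)=m(y\vee_1 x)$ and $m(x\vee_2 y)=m(y\vee_2 x)$ from Proposition \ref{ms-eq-20}$(3)$ immediately yields $m(x\vee_1 y)=m(y\vee_1 x)=m(x)$ and $m(x\vee_2 y)=m(y\vee_2 x)=m(x)$. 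This part is routine bookkeeping.

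For part $(2)$, I would verify that $\Ker(m)$ satisfies conditions $(DS_1)$, $(DS_2)$ and $(DS_4)$ of Lemma \ref{ps-eq-160}$(1)$, which is equivalent to being a deductive system. Here $(DS_1)$ is immediate from $m(1)=0$ (Proposition \ref{ms-eq-20}$(1)$). For the upset condition $(DS_2)$, if $m(x)=0$ and $x\le y$, then $0\le m(y)\le m(x)=0$ by Proposition \ref{ms-eq-20}$(2)$ together with $m\ge 0$, so $m(y)=0$. For $(DS_4)$, suppose $x, x\wedge y\thicksim x\in\Ker(m)$; since $x\wedge y\le x$, the defining property of a measure gives $m(x\wedge y\thicksim x)=m(x\wedge y)-m(x)$, whence $m(x\wedge y)=m(x)+m(x\wedge y\thicksim x)=0$. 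Thus $x\wedge y\in\Ker(m)$, and since $x\wedge y\le y$ and $\Ker(m)$ is already an upset, $y\in\Ker(m)$.

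For part $(3)$, assuming $A$ invariant, I would first establish the identity $m(x\thicksim y)=m(y\backsim x)$ for all $x,y$. By Proposition \ref{ps-eq-150} invariance yields $x\wedge y\thicksim x=y\thicksim x$ and $x\backsim x\wedge y=x\backsim y$, so Proposition \ref{ms-eq-20}$(5)$, namely $m(x\wedge y\thicksim x)=m(x\backsim x\wedge y)$, becomes $m(y\thicksim x)=m(x\backsim y)$; relabelling gives the claimed identity. With it, the normality condition $(DS_5)$ is direct: if $x\thicksim y,\, y\thicksim x\in\Ker(m)$, then $m(y\backsim x)=m(x\thicksim y)=0$ and $m(x\backsim y)=m(y\thicksim x)=0$, so $x\backsim y,\, y\backsim x\in\Ker(m)$, and the converse is symmetric. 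The main obstacle I anticipate is the invariance reduction itself: one must correctly track which of the two identities of Proposition \ref{ps-eq-150} collapses each implication term $x\wedge y\thicksim x$ and $x\backsim x\wedge y$ (using commutativity of $\wedge$ for the first), since everything in part $(3)$ hinges on that simplification. Once the reduction is pinned down, parts $(2)$ and $(3)$ both follow quickly.
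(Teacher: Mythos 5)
Your proposal is correct, and parts $(1)$ and $(3)$ follow the paper's own argument essentially verbatim: $(1)$ combines Proposition \ref{ps-eq-30-20}$(2)$ with the symmetry of Proposition \ref{ms-eq-20}$(3)$, and $(3)$ uses Proposition \ref{ps-eq-150} together with Proposition \ref{ms-eq-20}$(5)$ to obtain $m(x\thicksim y)=m(y\backsim x)$ — the paper writes this as the chain $m(x\thicksim y)=m(x\wedge y\thicksim y)=m(y\backsim x\wedge y)=m(y\backsim x)$ rather than isolating the identity first, but the content is the same. Part $(2)$ is where you genuinely diverge. The paper verifies $(DS_3)$ directly: from $x\in \Ker(m)$ it deduces $m(x\vee_1 y)=0$; from $y\thicksim x\in \Ker(m)$ and Proposition \ref{ps-eq-20}$(5)$ it deduces $m(x\wedge y\thicksim x)=0$; and it then invokes Proposition \ref{ps-eq-30} in the form $x\wedge y\thicksim x=y\thicksim x\vee_1 y$ to conclude $0=m(y)-m(x\vee_1 y)=m(y)$. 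You instead verify $(DS_4)$ via Lemma \ref{ps-eq-160}$(1)$, exploiting the fact that $x\wedge y\le x$ makes the measure axiom directly applicable: $m(x\wedge y\thicksim x)=m(x\wedge y)-m(x)$, hence $m(x\wedge y)=0$, and the already-established upset property finishes the argument. Your route is shorter and more elementary — it bypasses the $\vee_1$ machinery and Propositions \ref{ps-eq-20}$(5)$ and \ref{ps-eq-30} entirely, at the modest cost of routing through the equivalent characterization of deductive systems in Lemma \ref{ps-eq-160}; both arguments are sound.
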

\begin{proof}
$(1)$ By Proposition \ref{ps-eq-30-20} we have $y\vee_1 x=y\vee_2 x=x$, and applying Proposition \ref{ms-eq-20}$(3)$ 
we get: $m(x\vee_1 y)=m(y\vee_1 x)=m(x)$ and $m(x\vee_2 y)=m(y\vee_2 x)=m(x)$. \\ 
$(2)$ Obviously $1\in \Ker(m)$. \\ 
Consider $x, y\in A$ such that $x\le y$ and $x\in \Ker(m)$, that is $m(x)=0$. 
By Proposition \ref{ms-eq-20}$(2)$, $0=m(x)\ge m(y)$, hence $m(y)=0$, so $y\in \Ker(m)$. \\
Let $x, y\in A$ such that $x, y\thicksim x\in \Ker(m)$. 
Since $x\le x\vee_1 y$, we have $0=m(x)\ge m(x\vee_1 y)$, thus $m(x\vee_1 y)=0$. 
By Proposition \ref{ps-eq-20}$(5)$, $y\thicksim x\le x\wedge y\thicksim x$, so 
$0=m(y\thicksim x)\ge m(x\wedge y\thicksim x)$, that is $m(x\wedge y\thicksim x)=0$. 
Applying Proposition \ref{ps-eq-30}, $0=m(x\wedge y\thicksim x)=m(y\thicksim x\vee_1 y)=m(y)-m(x\vee_1 y)=m(y)$. 
Hence $y\in \Ker(m)$. We conclude that $\Ker(m)\in {\mathcal DS}(A)$. \\
$(3)$ Let $x, y\in A$ such that $x\thicksim y\in \Ker(m)$.   
Applying Propositions \ref{ps-eq-150} and \ref{ms-eq-20}$(5)$ we have: 
$0=m(x\thicksim y)=m(x\wedge y\thicksim y)=m(y\backsim x\wedge y)=m(y\backsim x)$, that is $y\backsim x\in \Ker(m)$. \\
Similarly $y\thicksim x\in \Ker(m)$ implies $x\backsim y\in \Ker(m)$. \\
Conversely, consider $x, y\in A$ such that $y\backsim x\in \Ker(m)$. 
Similarly as above we have: \\
$0=m(y\backsim x)=m(y\backsim x\wedge y)=m(x\wedge y\thicksim y)=m(x\thicksim y)$, hence $x\thicksim y\in \Ker(m)$. \\
Similarly $x\backsim y\in \Ker(m)$ implies $y\thicksim x\in \Ker(m)$. \\
We conclude that $\Ker(m)\in {\mathcal DS}_n(A)$.
\end{proof}

\begin{prop} \label{ms-eq-40} If $A$ is a pseudo equality algebra, then 
$\mathcal{MM}_{EQA}(A)\subseteq \mathcal{M}_{EQA}(A)$. 
\end{prop}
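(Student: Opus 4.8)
The plan is to fix an arbitrary measure-morphism $m\in\mathcal{MM}_{EQA}(A)$ and verify directly that it satisfies the defining property of a measure from Definition~\ref{ms-eq-10}: namely $m(y\thicksim x)=m(x\backsim y)=m(y)-m(x)$ whenever $y\le x$. The whole content is to show that the $\max$ appearing in the measure-morphism condition collapses to the plain difference on the relevant inputs, which is purely a matter of proving that $m$ is order-reversing.

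First I would record the auxiliary fact $m(1)=0$: putting $y:=x$ in the measure-morphism identity and using $(A_2)$ gives $m(1)=m(x\thicksim x)=m(x\wedge x\thicksim x)=\max\{0,\,m(x)-m(x)\}=0$. The key step — and the only one that is not a bookkeeping triviality — is to deduce that $a\le b$ implies $m(a)\ge m(b)$. This is exactly where the $\max$ does the work: if $a\le b$ then $a\wedge b=a$, so $a\wedge b\thicksim a=a\thicksim a=1$ by $(A_2)$, and the measure-morphism identity forces $m(1)=\max\{0,\,m(b)-m(a)\}$. Since $m(1)=0$, this yields $m(b)-m(a)\le 0$, i.e. $m(a)\ge m(b)$. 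I expect this to be the main (though short) obstacle, since the point is to notice that the argument $a\wedge b\thicksim a$ degenerates to $1$ under $a\le b$, turning the nonnegativity of $m$ into a genuine monotonicity statement.

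Finally I would combine these. For $y\le x$ we have $x\wedge y=y$, hence $x\wedge y\thicksim x=y\thicksim x$ and $x\backsim x\wedge y=x\backsim y$; the measure-morphism identity then gives $m(y\thicksim x)=m(x\backsim y)=\max\{0,\,m(y)-m(x)\}$. Applying the order-reversing property established above to $y\le x$ yields $m(y)\ge m(x)$, so $\max\{0,\,m(y)-m(x)\}=m(y)-m(x)$. Therefore $m(y\thicksim x)=m(x\backsim y)=m(y)-m(x)$, which is precisely the condition defining a measure. Hence every measure-morphism is a measure, and $\mathcal{MM}_{EQA}(A)\subseteq\mathcal{M}_{EQA}(A)$.
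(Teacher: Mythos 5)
Your proof is correct and follows essentially the same route as the paper: both establish $m(1)=0$ from the measure-morphism identity with equal arguments, derive order-reversal by noting that $a\le b$ makes $a\wedge b\thicksim a$ collapse to $1$, and then use monotonicity to replace the $\max$ by the plain difference when $y\le x$. The only cosmetic difference is that you isolate monotonicity as a general lemma while the paper derives it inline for the pair $y\le x$.
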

\begin{proof}
Let $m\in \mathcal{MM}_{EQA}(A)$. We have $m(1)=m(1\wedge 1\thicksim 1)=\max\{0, m(1)-m(1)\}=0$. \\
Consider $x, y\in A$ such that $y\le x$. Then we have: 
$0=m(1)=m(y\wedge x\thicksim y)=\max\{0, m(x)-m(y)\}$, hence $m(x)\le m(y)$. 
It follows that $m(y\thicksim x)=m(x\wedge y\thicksim x)=\max\{0, m(y)-m(x)\}=m(y)-m(x)$. 
Similarly $m(x\backsim y)=m(y)-m(x)$. \\
We conclude that $m\in \mathcal{M}_{EQA}(A)$, that is $\mathcal{MM}_{EQA}(A)\subseteq \mathcal{M}_{EQA}(A)$.
\end{proof}

\begin{ex} \label{ms-eq-50} Let $(A, \wedge, \thicksim, \backsim, 1)$ be the pseudo equality algebra from 
Example \ref{ps-ex-50}. Define $u_{\alpha,\beta}:A\longrightarrow [0, +\infty)$ by $u_{\alpha,\beta}(0)=\alpha$, 
$u_{\alpha,\beta}(a)=\beta$, $u_{\alpha,\beta}(b)=\alpha-\beta$, $u_{\alpha,\beta}(1)=0$, with 
$\alpha\ge \beta\ge 0$. 
Then $\mathcal{M}_{EQA}(A)=\mathcal{MM}_{EQA}(A)=\{u_{\alpha,\beta}\mid \alpha\ge \beta\ge 0\}$. 
\end{ex}

\begin{prop} \label{ms-eq-60} Let $A$ be a pseudo equality algebra and $m\in \mathcal{M}_{EQA}(A)$. 
Then $\Ker(m)\in \mathcal{DS}_c(A)$. 
\end{prop}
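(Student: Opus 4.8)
The plan is to reduce the statement to the two commutativity conditions of Definition \ref{com-ds-10}. Proposition \ref{ms-eq-30}(2) already gives $\Ker(m)\in{\mathcal DS}(A)$, so it suffices to verify $(cds_1)$ and $(cds_2)$ for $D=\Ker(m)$; using the $\vee_1,\vee_2$ notation this means showing that $m(x\wedge y\thicksim y)=0$ forces $m(x\thicksim (x\vee_1 y))=0$, and that $m(y\backsim x\wedge y)=0$ forces $m((x\vee_2 y)\backsim x)=0$.

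The key step will be to compute $m(x\vee_1 y)$ and $m(x\vee_2 y)$ for arbitrary $x,y\in A$. First I would record the order relation $x\le x\wedge y\thicksim y$, which follows from Proposition \ref{ps-eq-20}(1) after interchanging $x$ and $y$. Since $y\vee_1 x=(x\wedge y\thicksim y)\backsim x$, combining this relation with the inequality $x\wedge y\le y$ and the defining identity of a measure gives $m(y\vee_1 x)=m((x\wedge y\thicksim y)\backsim x)=m(x)-m(x\wedge y\thicksim y)=m(x)-m(x\wedge y)+m(y)$. Proposition \ref{ms-eq-20}(3),(4) then upgrades this to the symmetric formula $m(x\vee_1 y)=m(x\vee_2 y)=m(x)+m(y)-m(x\wedge y)$.

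With this formula the two implications are immediate. For $(cds_1)$, the hypothesis $m(x\wedge y\thicksim y)=0$ reads $m(x\wedge y)=m(y)$ (because $x\wedge y\le y$), whence $m(x\vee_1 y)=m(x)$. Since $x\le x\vee_1 y$ by Proposition \ref{ps-eq-30-20}(4), the measure identity yields $m(x\thicksim (x\vee_1 y))=m(x)-m(x\vee_1 y)=0$, i.e. $x\thicksim (x\vee_1 y)\in\Ker(m)$. For $(cds_2)$ the argument is symmetric: $m(y\backsim x\wedge y)=0$ again gives $m(x\wedge y)=m(y)$, hence $m(x\vee_2 y)=m(x)$, and from $x\le x\vee_2 y$ one obtains $m((x\vee_2 y)\backsim x)=m(x)-m(x\vee_2 y)=0$. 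Thus both $(cds_1)$ and $(cds_2)$ hold, so $\Ker(m)\in{\mathcal DS}_c(A)$.

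The only delicate point is the bookkeeping of which argument of $\thicksim$ or $\backsim$ is the larger element, since the measure identity $m(y\thicksim x)=m(x\backsim y)=m(y)-m(x)$ is available only when $y\le x$. Establishing $x\le x\wedge y\thicksim y$ together with $x\le x\vee_1 y$ and $x\le x\vee_2 y$ is precisely what makes the identity applicable at each stage, and this verification of the order relations is the main (though routine) obstacle.
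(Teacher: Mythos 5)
Your proof is correct and follows essentially the same route as the paper's: both arguments rest on the identity $m(x\vee_1 y)=m(y\vee_1 x)=m(x)-m(x\wedge y\thicksim y)$, obtained from Proposition \ref{ms-eq-20}$(3)$,$(4)$, the order facts $x\le x\wedge y\thicksim y$ (Proposition \ref{ps-eq-20}$(1)$) and $x\le x\vee_1 y$, and the defining identity of a measure, and then conclude $m(x\thicksim x\vee_1 y)=m(x\wedge y\thicksim y)=0$ (and dually for $\vee_2$). The only difference is organizational: you first extract the general formula $m(x\vee_1 y)=m(x\vee_2 y)=m(x)+m(y)-m(x\wedge y)$ and then specialize, whereas the paper performs the same computation inline without expanding $m(x\wedge y\thicksim y)$ as $m(x\wedge y)-m(y)$.
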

\begin{proof}
Let $x, y\in A$ such that $x\wedge y\thicksim y\in \Ker(m)$, that is $m(x\wedge y\thicksim y)=0$.  
By Propositions \ref{ps-eq-20}$(3)$, \ref{ms-eq-20}$(3)$, \ref{ps-eq-20}$(1)$ we have 
$x\le ((x\wedge y\thicksim x)\backsim y)$, $m(((x\wedge y\thicksim x)\backsim y)=m((x\wedge y\thicksim y)\backsim x)$, 
and $x\le x\wedge y\thicksim y$, respectively. Applying the definition of $m$ we get: \\
$\hspace*{1cm}$
$m(x\thicksim ((x\wedge y\thicksim x)\backsim y))=m(x)-m((x\wedge y\thicksim x)\backsim y)$ \\
$\hspace*{5.4cm}$
$=m(x)-m((x\wedge y\thicksim y)\backsim x)$ \\
$\hspace*{5.4cm}$
$=m(x)-m(x)+m(x\wedge y\thicksim y)=0$, \\
hence $x\thicksim ((x\wedge y\thicksim x)\backsim y)\in \Ker(m)$. \\
Similarly, if $x, y\in A$ such that $y\backsim x\wedge y\in \Ker(m)$, then $m(y\backsim x\wedge y)=0$, and we have: \\
$\hspace*{1cm}$
$m((y\thicksim (x\backsim x\wedge y))\backsim x)=m(x)-m(y\thicksim (x\backsim x\wedge y))$ \\
$\hspace*{5.4cm}$
$=m(x)-m(x\thicksim (y\backsim x\wedge y))$ \\
$\hspace*{5.4cm}$
$=m(x)-m(x)+m(y\backsim x\wedge y))=0$. \\
Thus $y\thicksim (x\backsim x\wedge y))\backsim x\in \Ker(m)$, and we conclude that $\Ker(m)\in \mathcal{DS}_c(A)$. 
\end{proof}

\begin{theo} \label{ms-eq-70} Let $A$ be an invariant pseudo equality algebra and $m\in \mathcal{M}_{EQA}(A)$ and 
let $\hat{m}:A/\Ker(m)\longrightarrow [0, +\infty)$ defined by $\hat{m}(\hat{x}):=m(x)$, 
where $\hat{x}=x/\Ker(m)\in A/\Ker(m)$. Then $A/\Ker(m)$ is a commutative pseudo equality algebra and 
$\hat{m}\in \mathcal{M}_{EQA}(A/\Ker(m))$.   
\end{theo}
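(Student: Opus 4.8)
The plan is to split the statement into its two assertions and dispatch them using the machinery already assembled. The commutativity of the quotient is essentially bookkeeping: since $A$ is invariant and $m$ is a measure, Proposition \ref{ms-eq-30}$(3)$ gives $\Ker(m)\in {\mathcal DS}_n(A)$, so that $A/\Ker(m)=A/\Theta_{\Ker(m)}$ is indeed a pseudo equality algebra. Proposition \ref{ms-eq-60} shows $\Ker(m)\in {\mathcal DS}_c(A)$. As $\Ker(m)$ is simultaneously normal and commutative, Theorem \ref{com-ds-50} yields at once that $A/\Ker(m)$ is a commutative pseudo equality algebra.

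The real content is the analysis of $\hat m$, and the first thing to settle is that $\hat m$ is well defined, i.e. that $(x,x')\in \Theta_{\Ker(m)}$ forces $m(x)=m(x')$. By definition of $\Theta_{\Ker(m)}$ this hypothesis reads $m(x\thicksim x')=m(x'\thicksim x)=0$. Here I would invoke invariance in the form of Proposition \ref{ps-eq-150}: $x\thicksim x'=x\wedge x'\thicksim x'$ and $x'\thicksim x=x\wedge x'\thicksim x$. Since $x\wedge x'\le x'$ and $x\wedge x'\le x$, the defining property of a measure turns these identities into $0=m(x\wedge x')-m(x')$ and $0=m(x\wedge x')-m(x)$, whence $m(x)=m(x\wedge x')=m(x')$. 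This makes $\hat m$ well defined, and it clearly still takes values in $[0,+\infty)$.

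To verify that $\hat m$ is a measure I would use a representative trick. Suppose $\hat y\le \hat x$ in $A/\Ker(m)$; by the definition of the order this says $\widehat{y\wedge x}=\hat y$. Setting $w=y\wedge x$ we obtain a representative of $\hat y$ enjoying the crucial extra property $w\le x$ in $A$ itself. Because the operations are computed on representatives, $\hat y\thicksim \hat x=\widehat{w\thicksim x}$ and $\hat x\backsim \hat y=\widehat{x\backsim w}$, so by well-definedness $\hat m(\hat y\thicksim \hat x)=m(w\thicksim x)$ and $\hat m(\hat x\backsim \hat y)=m(x\backsim w)$. Applying the measure property of $m$ to the genuine inequality $w\le x$ gives $m(w\thicksim x)=m(x\backsim w)=m(w)-m(x)=\hat m(\hat y)-\hat m(\hat x)$, which is exactly what is required.

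I expect the well-definedness step to be the main obstacle: it is the only place where invariance is genuinely used, since without the identities of Proposition \ref{ps-eq-150} one cannot rewrite the possibly incomparable pair $x,x'$ via $x\wedge x'\le x'$ and $x\wedge x'\le x$ and so cannot bring the measure axiom to bear. Once that is in place, the measure verification is routine, the only idea being to replace $\hat y$ by the representative $w=y\wedge x$ so that the quotient inequality is lifted to an honest inequality in $A$.
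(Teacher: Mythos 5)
Your proof is correct, and its first half is exactly the paper's: Propositions \ref{ms-eq-30}$(3)$ and \ref{ms-eq-60} give $\Ker(m)\in{\mathcal DS}_n(A)\cap{\mathcal DS}_c(A)$, and Theorem \ref{com-ds-50} then yields commutativity of the quotient. Where you genuinely differ is in the treatment of $\hat m$. The paper's well-definedness argument stays inside the join machinery: from $m(x\thicksim y)=0$ it passes, via invariance and Proposition \ref{ps-eq-30-20}$(6)$, to $m(x)-m(y\vee_1 x)=0$, does the same with $x$ and $y$ interchanged, and then needs the symmetry $m(x\vee_1 y)=m(y\vee_1 x)$ of Proposition \ref{ms-eq-20}$(3)$ to conclude $m(x)=m(y)$. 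You instead use invariance (Proposition \ref{ps-eq-150}) to rewrite both $x\thicksim y$ and $y\thicksim x$ as equality terms against the meet $x\wedge y$, and the measure axiom applied to $x\wedge y\le x$ and $x\wedge y\le y$ gives $m(x)=m(x\wedge y)=m(y)$ at once. Likewise, for the measure property the paper computes with $\hat y\vee_1\hat x=\hat x$ and invokes Proposition \ref{ms-eq-20}$(3)$ again, whereas your representative trick $w=y\wedge x$ lifts $\hat y\le\hat x$ to an honest inequality $w\le x$ in $A$ and applies the axiom for $m$ verbatim. Your route is more elementary, bypassing $\vee_1$, $\vee_2$ and Proposition \ref{ms-eq-20}$(3)$ entirely; the paper's route buys, as a by-product, the quotient-level join identities it reuses elsewhere. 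One small caveat: invariance is used not only for well-definedness, as your closing remark suggests, but also (inside Proposition \ref{ms-eq-30}$(3)$) to make $\Ker(m)$ normal so that $\Theta_{\Ker(m)}$ is a congruence at all; your proof invokes this correctly, so only the commentary overstates the point.
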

\begin{proof} 
According to Propositions \ref{ms-eq-30} and \ref{ms-eq-60}, $\Ker(m)\in {\mathcal DS}_n(A)$ and 
$\Ker(m)\in {\mathcal DS}_c(A)$. By Theorem \ref{com-ds-50}, $A/\Ker(m)$ is a commutative pseudo 
equality algebra. \\
We show that $\hat{m}$ is a well-defined function on $A/\Ker(m)$. Indeed, let $x,y \in A$ such that 
$\hat{x}=\hat{y}$, that is $x\thicksim y, y\thicksim x\in A/\Ker(m)$. 
Since $A$ is invariant, applying Propositin \ref{ps-eq-30-20}$(6)$ we have: \\
$\hspace*{1cm}$
$0=m(x\thicksim y)=m(x\wedge y\thicksim y)=m(x\thicksim y\vee_1 x)=m(x)-m(y\vee_1 x)$ and \\
$\hspace*{1cm}$
$0=m(y\thicksim x)=m(x\wedge y\thicksim x)=m(y\thicksim x\vee_1 y)=m(y)-m(x\vee_1 y)$. \\
By Proposition \ref{ms-eq-20}$(3)$, $m(x\vee_1 y)=m(y\vee_1 x)$, hence $m(x)=m(y)$, that is $\hat{m}$ is a well-defined. \\ 
Let now $\hat{x}, \hat{y}\in A/\Ker(m)$ such that $\hat{y}\le \hat{x}$, so by Proposition \ref{ps-eq-30-20}$(2)$, 
$\hat{y}\vee_1 \hat{x}=\hat{x}$. 
We have $\hat{m}(\hat{y}\thicksim \hat{x})=m(y\thicksim x)=m(x\wedge y\thicksim x)=m(y\thicksim x\vee_1 y)= 
m(y)-m(x\vee_1 y)$. \\
But $m(x\vee_1 y)=m(y\vee_1 x)=\hat{m}(\hat{y}\vee_1 \hat{x})=\hat{m}(\hat{x})=m(x)$.  
Hence $\hat{m}(\hat{y}\thicksim \hat{x})=\hat{m}(\hat{y})-\hat{m}(\hat{x})$. 
Similarly $\hat{m}(\hat{x}\backsim \hat{y})=\hat{m}(\hat{y})-\hat{m}(\hat{x})$. 
We conclude that $\hat{m}\in \mathcal{M}_{EQA}(A/\Ker(m))$.
\end{proof}

\begin{Def} \label{ms-eq-80} Let $A$ be a pseudo equality algebra. A system ${\mathcal S}$ of measures 
on $A$ is an \emph{order-determining system} on $A$ if for all measures $m\in {\mathcal S}$, 
$m(x)\ge m(y)$ implies $x\le y$.
\end{Def}

\begin{ex} \label{ms-eq-80-10} In Example \ref{ms-eq-50}, ${\mathcal S}=\{u_{\alpha,\beta}\mid \alpha\ge \beta\ge 0\}$ 
is an order-determining system on the pseudo equality algebra $A$. 
\end{ex}

\begin{prop} \label{ms-eq-90} If a pseudo equality algebra $A$ possesses an order-determining system ${\mathcal S}$ 
of measures, then $A$ is commutative. 
\end{prop}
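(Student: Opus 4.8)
The plan is to verify the characterization of commutativity from Theorem \ref{com-eq-70}(c): for every pair $x\le y$ I must establish $y=(x\thicksim y)\backsim x$ and $y=x\thicksim (y\backsim x)$. I would first rewrite these two target identities in terms of the join-like operations $\vee_1,\vee_2$. Since $x\le y$ forces $x\wedge y=x$, one checks directly that $(x\thicksim y)\backsim x=y\vee_1 x$ and $x\thicksim (y\backsim x)=y\vee_2 x$, so the goal becomes $y=y\vee_1 x=y\vee_2 x$.

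The first inequality is free: by Proposition \ref{ps-eq-30-20}$(4)$ we always have $y\le y\vee_1 x$ and $y\le y\vee_2 x$. For the reverse inequalities I would exploit the order-determining system ${\mathcal S}$. Fix any $m\in{\mathcal S}$. Since $x\le y$, Proposition \ref{ps-eq-30-20}$(2)$ gives $x\vee_1 y=x\vee_2 y=y$, and then the symmetry of measure values established in Proposition \ref{ms-eq-20}$(3)$ yields $m(y\vee_1 x)=m(x\vee_1 y)=m(y)$ and likewise $m(y\vee_2 x)=m(y)$. In particular $m(y\vee_1 x)\ge m(y)$ and $m(y\vee_2 x)\ge m(y)$ for every $m\in{\mathcal S}$.

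Now the defining property of an order-determining system (Definition \ref{ms-eq-80}) turns these uniform inequalities of measure values into order relations: $m(y\vee_1 x)\ge m(y)$ for all $m\in{\mathcal S}$ forces $y\vee_1 x\le y$, and similarly $y\vee_2 x\le y$. Combining these with $y\le y\vee_1 x$, $y\le y\vee_2 x$ and antisymmetry of $\le$ gives $y=y\vee_1 x=y\vee_2 x$, that is, $y=(x\thicksim y)\backsim x=x\thicksim (y\backsim x)$. Since $x\le y$ was arbitrary, Theorem \ref{com-eq-70} then yields that $A$ is commutative.

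I expect the only delicate points to be bookkeeping rather than genuine obstacles: correctly reducing $(x\thicksim y)\backsim x$ and $x\thicksim (y\backsim x)$ to $y\vee_1 x$ and $y\vee_2 x$ by using $x\wedge y=x$, and reading Definition \ref{ms-eq-80} in its intended ``for all $m\in{\mathcal S}$'' form so that the equalities $m(y\vee_i x)=m(y)$ are strong enough to apply it. Note that no invariance or finiteness hypothesis is required, since Proposition \ref{ms-eq-20} holds for an arbitrary measure.
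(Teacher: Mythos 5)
Your proof is correct and follows essentially the same route as the paper's: the paper likewise reduces to criterion (c) of Theorem \ref{com-eq-70}, obtains $m((x\thicksim y)\backsim x)=m(x\thicksim(y\backsim x))=m(y)$ for all $m\in{\mathcal S}$ (citing Proposition \ref{ms-eq-30}$(1)$, whose proof is exactly your combination of Propositions \ref{ps-eq-30-20}$(2)$ and \ref{ms-eq-20}$(3)$), and then invokes the order-determining property to turn these measure equalities into the element equalities. The only cosmetic difference is that you get one of the two inequalities from Proposition \ref{ps-eq-30-20}$(4)$ instead of a second application of the order-determining property.
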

\begin{proof} 
Let $m\in {\mathcal S}$ and let $x, y\in A$ such that $x\le y$. By Proposition \ref{ms-eq-30}$(1)$, 
$m((x\thicksim y)\backsim x)=m(x\thicksim (y\backsim x))=m(y)$. 
Since ${\mathcal S}$ is order-determining then $(x\thicksim y)\backsim x=x\thicksim (y\backsim x)=y$.  
Applying Theorem \ref{com-eq-70}, it follows that $A$ is commutative.
\end{proof}

Measures and measure-morphisms on pseudo BCK-algebras were introduced and studied in \cite{Ciu6}. 
Given a pseudo BCK-algebra $(B,\rightarrow, \rightsquigarrow,1)$, a measure on $B$ is a map 
$m:B\longrightarrow [0,+\infty)$ satisfying the axiom  
$m(x\rightarrow y)=m(x\rightsquigarrow y)=m(y)-m(x)$, whenever $y\le x$.  \\
A measure-morphism on $B$ is a map $m:B\longrightarrow [0,+\infty)$ satisying the axiom  
$m(x\rightarrow y)=m(x\rightsquigarrow y)=\max\{0,m(y)-m(x)\}$. 
Denote $\mathcal{M}_{BCK}(B)$ the set of all measures and $\mathcal{MM}_{BCK}(B)$ the set of all 
measure-morphisms on a pseudo BCK-algebra $B$. 

\begin{theo} \label{ms-eq-100} The following hold: \\
$(1)$ if $(A, \wedge, \thicksim, \backsim, 1)$ is a pseudo equality algebra, then 
$\mathcal{M}_{EQA}(A)\subseteq \mathcal{M}_{BCK}(\Psi(A));$ \\
$(2)$ if $(B,\wedge,\rightarrow, \rightsquigarrow,1)$ is a pseudo BCK(pC)-meet-semilattice, then 
$\mathcal{M}_{BCK}(B)\subseteq \mathcal{M}_{EQA}(\Phi(B))$. 
\end{theo}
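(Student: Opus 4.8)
The plan is to establish both inclusions by directly unwinding the term-level definitions of $\Psi$ and $\Phi$ from Theorem \ref{ps-eq-110}, using the crucial fact that in each construction the underlying meet-semilattice is untouched. Consequently the order $\le$ is the same in $A$ and in $\Psi(A)$ (respectively in $B$ and in $\Phi(B)$), so the hypothesis ``$y\le x$'' appearing in the measure axioms refers to the same comparable pairs on both sides, and moreover $x\wedge y=y$ whenever $y\le x$.

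For $(1)$ I would take an arbitrary $m\in \mathcal{M}_{EQA}(A)$ and verify the defining axiom of a measure on the pseudo BCK-algebra $\Psi(A)$ (which is a pseudo BCK(pC)-meet-semilattice by Theorem \ref{ps-eq-110}$(1)$). Fix $x, y\in A$ with $y\le x$; then $x\wedge y=y$, so the implications of $\Psi(A)$ collapse to $x\rightarrow y=(x\wedge y)\thicksim x=y\thicksim x$ and $x\rightsquigarrow y=x\backsim (x\wedge y)=x\backsim y$. The measure identity for $A$, namely $m(y\thicksim x)=m(x\backsim y)=m(y)-m(x)$, then yields $m(x\rightarrow y)=m(x\rightsquigarrow y)=m(y)-m(x)$, which is precisely the measure axiom on $\Psi(A)$. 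Hence $m\in \mathcal{M}_{BCK}(\Psi(A))$.

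For $(2)$ I would take $m\in \mathcal{M}_{BCK}(B)$ and check the measure axiom on the pseudo equality algebra $\Phi(B)$ (well-defined by Theorem \ref{ps-eq-110}$(2)$). Fix $x, y\in B$ with $y\le x$. By the definition of $\Phi$, the equality operations are $a\thicksim b=b\rightarrow a$ and $a\backsim b=a\rightsquigarrow b$, so $y\thicksim x=x\rightarrow y$ and $x\backsim y=x\rightsquigarrow y$. The measure identity for $B$, $m(x\rightarrow y)=m(x\rightsquigarrow y)=m(y)-m(x)$, immediately gives $m(y\thicksim x)=m(x\backsim y)=m(y)-m(x)$, which is the measure axiom on $\Phi(B)$; thus $m\in \mathcal{M}_{EQA}(\Phi(B))$.

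There is no genuine obstacle: each inclusion is a substitution of the transferred operations followed by an application of the measure identity, made painless by the fact that the axioms are imposed only on comparable pairs where the meets degenerate. The single point deserving care is the bookkeeping of argument order, since $a\thicksim b=b\rightarrow a$ reverses operands; I would therefore confirm that the correct element occupies the correct slot before invoking the measure identities, so that $y\thicksim x$ is matched with $x\rightarrow y$ rather than with $y\rightarrow x$.
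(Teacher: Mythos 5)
Your proposal is correct and is exactly the argument the paper intends: the paper dismisses this theorem with ``The proof is straightforward,'' and your write-up is precisely that straightforward verification, unwinding $\Psi$ and $\Phi$, noting the order is unchanged so $x\wedge y=y$ when $y\le x$, and matching $y\thicksim x$ with $x\rightarrow y$ and $x\backsim y$ with $x\rightsquigarrow y$. Your attention to the operand reversal in $a\thicksim b=b\rightarrow a$ is the one point where care is needed, and you handle it correctly.
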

\begin{proof}
The proof is straightforward.
\end{proof}

\begin{cor} \label{ms-eq-110} If $(A, \wedge, \thicksim, \backsim, 1)$ is an invariant pseudo equality algebra, then $\mathcal{M}_{EQA}(A)=\mathcal{M}_{BCK}(\Psi(A))$.
\end{cor}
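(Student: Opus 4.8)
The plan is to obtain the claimed equality as a double inclusion, with both directions coming almost immediately from Theorem \ref{ms-eq-100} together with the invariance hypothesis. First I would note that part $(1)$ of Theorem \ref{ms-eq-100} already gives the inclusion $\mathcal{M}_{EQA}(A)\subseteq \mathcal{M}_{BCK}(\Psi(A))$ for an arbitrary pseudo equality algebra $A$, so this direction requires no appeal to invariance at all.

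For the reverse inclusion I would first observe, via Theorem \ref{ps-eq-110}$(1)$, that $\Psi(A)=(A,\wedge,\rightarrow,\rightsquigarrow,1)$ is a pseudo BCK(pC)-meet-semilattice. This is exactly the hypothesis needed to invoke part $(2)$ of Theorem \ref{ms-eq-100} with $B:=\Psi(A)$, which yields $\mathcal{M}_{BCK}(\Psi(A))\subseteq \mathcal{M}_{EQA}(\Phi(\Psi(A)))$. The key step is then to rewrite the right-hand side: since $A$ is invariant, Theorem \ref{ps-eq-140}$(3)$ gives $\Phi(\Psi(A))=A$, so the inclusion becomes $\mathcal{M}_{BCK}(\Psi(A))\subseteq \mathcal{M}_{EQA}(A)$. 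Combining the two inclusions yields $\mathcal{M}_{EQA}(A)=\mathcal{M}_{BCK}(\Psi(A))$, as desired.

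I do not expect any genuine obstacle here: the corollary is essentially the observation that the two inclusions of Theorem \ref{ms-eq-100} become mutually inverse as soon as invariance forces $\Phi\circ\Psi$ to be the identity on $A$. The only point deserving a moment of care is to confirm that $\Psi(A)$ really satisfies the (pC) condition, so that part $(2)$ of Theorem \ref{ms-eq-100} is applicable; this is precisely what Theorem \ref{ps-eq-110}$(1)$ guarantees. Since the substantive work has already been carried out in Theorem \ref{ms-eq-100}, the proof of the corollary itself is a short two-line assembly.
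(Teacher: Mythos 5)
Your proof is correct and is essentially identical to the paper's: both establish the chain $\mathcal{M}_{EQA}(A)\subseteq \mathcal{M}_{BCK}(\Psi(A))\subseteq \mathcal{M}_{EQA}(\Phi(\Psi(A)))=\mathcal{M}_{EQA}(A)$ using Theorem \ref{ms-eq-100} for the two inclusions and Theorem \ref{ps-eq-140}$(3)$ (invariance) for the final equality. Your extra remark checking that $\Psi(A)$ satisfies the (pC) condition via Theorem \ref{ps-eq-110}$(1)$ is a detail the paper leaves implicit, but the argument is the same.
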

\begin{proof}
Applying Theorems \ref{ps-eq-140}$(3)$, \ref{ms-eq-100} we have: \\
$\hspace*{2cm}$
$\mathcal{M}_{EQA}(A)\subseteq \mathcal{M}_{BCK}(\Psi(A))\subseteq \mathcal{M}_{EQA}(\Phi(\Psi(A)))=  
\mathcal{M}_{EQA}(A)$, \\
hence $\mathcal{M}_{EQA}(A)=\mathcal{M}_{BCK}(\Psi(A))$.
\end{proof}

\begin{theo} \label{ms-eq-120} The following hold: \\
$(1)$ if $(A, \wedge, \thicksim, \backsim, 1)$ is a pseudo equality algebra, then 
$\mathcal{MM}_{EQA}(A)\subseteq \mathcal{MM}_{BCK}(\Psi(A));$ \\
$(2)$ if $(B,\wedge,\rightarrow, \rightsquigarrow,1)$ is a linearly ordered pseudo BCK(pC)-meet-semilattice, then 
$\mathcal{MM}_{BCK}(B)\subseteq \mathcal{MM}_{EQA}(\Phi(B))$. 
\end{theo}
\begin{proof}
$(1)$ It is obvious. \\
$(2)$ Let $m\in \mathcal{MM}_{BCK}(B)$. If $x\le y$, thn $m(x)\ge m(y)$ and 
$m(x\wedge y\thicksim x)=m(x\backsim x\wedge y)=m(1)=0=max\{0,m(y)-m(x)\}$. If $y\le x$ we have: \\
$\hspace*{2cm}$
$m(x\wedge y\thicksim x)=m(y\thicksim x)=m(x\rightarrow y)=max\{0,m(y)-m(x)\}$ and \\
$\hspace*{2cm}$
$m(x\backsim x\wedge y)=m(x\backsim y)=m(x\rightsquigarrow x)=max\{0,m(y)-m(x)\}$, \\
hence $m\in \mathcal{MM}_{EQA}(\Phi(B))$. 
\end{proof}

\begin{cor} \label{ms-eq-130} If $(A, \wedge, \thicksim, \backsim, 1)$ is an invariant linearly ordered 
pseudo equality algebra, then $\mathcal{MM}_{EQA}(A)=\mathcal{MM}_{BCK}(\Psi(A))$.
\end{cor}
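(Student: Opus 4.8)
The plan is to mimic exactly the sandwich argument used to prove Corollary \ref{ms-eq-110}, but replacing the measure inclusions of Theorem \ref{ms-eq-100} with the measure-morphism inclusions of Theorem \ref{ms-eq-120}. The one point demanding attention is that the linear-ordering hypothesis must be transferred correctly to the corresponding pseudo BCK(pC)-meet-semilattice, so that the second inclusion of Theorem \ref{ms-eq-120} becomes available.

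First I would observe that, by Theorem \ref{ps-eq-110}$(1)$, the structure $\Psi(A)=(A, \wedge, \rightarrow, \rightsquigarrow, 1)$ is a pseudo BCK(pC)-meet-semilattice sharing the same meet operation $\wedge$, and hence the same underlying partial order $\le$, as $A$. Since $A$ is linearly ordered, so is $\Psi(A)$. Thus $\Psi(A)$ is a \emph{linearly ordered} pseudo BCK(pC)-meet-semilattice, which is precisely the hypothesis required in Theorem \ref{ms-eq-120}$(2)$.

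Next I would run the squeeze. By Theorem \ref{ms-eq-120}$(1)$ we have $\mathcal{MM}_{EQA}(A)\subseteq \mathcal{MM}_{BCK}(\Psi(A))$. Applying Theorem \ref{ms-eq-120}$(2)$ to the linearly ordered pseudo BCK(pC)-meet-semilattice $\Psi(A)$ gives $\mathcal{MM}_{BCK}(\Psi(A))\subseteq \mathcal{MM}_{EQA}(\Phi(\Psi(A)))$. Finally, because $A$ is invariant, Theorem \ref{ps-eq-140}$(3)$ yields $\Phi(\Psi(A))=A$, so that $\mathcal{MM}_{EQA}(\Phi(\Psi(A)))=\mathcal{MM}_{EQA}(A)$. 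Chaining these inclusions,
\[
\mathcal{MM}_{EQA}(A)\subseteq \mathcal{MM}_{BCK}(\Psi(A))\subseteq \mathcal{MM}_{EQA}(\Phi(\Psi(A)))=\mathcal{MM}_{EQA}(A),
\]
forces equality throughout, and in particular $\mathcal{MM}_{EQA}(A)=\mathcal{MM}_{BCK}(\Psi(A))$.

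I expect no genuine obstacle here: the argument is a verbatim analogue of Corollary \ref{ms-eq-110}. The only step warranting a moment's care is the passage of linear ordering from $A$ to $\Psi(A)$, which is what licenses Theorem \ref{ms-eq-120}$(2)$; this is immediate once one notes that $\Psi$ preserves both the meet and the induced order. It is also worth flagging why the linearity hypothesis is genuinely needed for measure-morphisms but not for measures: the inclusion in Theorem \ref{ms-eq-120}$(2)$ is stated only for linearly ordered semilattices, reflecting that the $\max\{0, m(y)-m(x)\}$ clause in the definition of a measure-morphism must be reconciled with the case analysis $x\le y$ versus $y\le x$, which is exhaustive precisely when the order is linear.
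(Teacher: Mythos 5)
Your proposal is correct and is essentially identical to the paper's own proof: the same sandwich argument $\mathcal{MM}_{EQA}(A)\subseteq \mathcal{MM}_{BCK}(\Psi(A))\subseteq \mathcal{MM}_{EQA}(\Phi(\Psi(A)))=\mathcal{MM}_{EQA}(A)$ via Theorem \ref{ms-eq-120} and Theorem \ref{ps-eq-140}$(3)$. Your explicit remark that $\Psi(A)$ inherits the linear order from $A$ (since $\Psi$ preserves the meet, hence the order) is a detail the paper leaves implicit, and it is exactly what legitimizes the use of Theorem \ref{ms-eq-120}$(2)$.
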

\begin{proof}
Applying Theorems \ref{ps-eq-140}$(3)$, \ref{ms-eq-120} we have: \\
$\hspace*{2cm}$
$\mathcal{MM}_{EQA}(A)\subseteq \mathcal{MM}_{BCK}(\Psi(A))\subseteq \mathcal{MM}_{EQA}(\Phi(\Psi(A)))=  
\mathcal{MM}_{EQA}(A)$, \\
hence $\mathcal{MM}_{EQA}(A)=\mathcal{MM}_{BCK}(\Psi(A))$.
\end{proof}

Internal states and state-morphism operators on pseudo equality algebras were defined and studied in \cite{Ciu8}. \\
Let $(A,\wedge,\thicksim,\backsim,1)$ be a pseudo equality algebra and 
$\sigma:A \longrightarrow A$ be a unary operator on $A$. For all $x, y\in A$ consider the following axioms: \\ 
$(IS_1)$ $\sigma (x)\le \sigma (y)$, whenever $x\le y;$ \\
$(IS_2)$ $\sigma(x\wedge y\thicksim x)=\sigma(y)\thicksim \sigma((x\wedge y\thicksim x) \backsim y)$ and 
         $\sigma(x\backsim x\wedge y)=\sigma(y\thicksim (x\backsim x\wedge y)) \backsim \sigma(y);$ \\
$(IS^{'}_2)$ $\sigma(x\wedge y\thicksim x)=\sigma(y)\thicksim \sigma((x\wedge y\thicksim y) \backsim x)$ and 
             $\sigma(x\backsim x\wedge y)=\sigma(x\thicksim (y\backsim x\wedge y)) \backsim \sigma(y);$ \\
$(IS_3)$ $\sigma(\sigma (x) \thicksim \sigma (y))= \sigma (x) \thicksim \sigma (y)$ and 
         $\sigma(\sigma (x) \backsim \sigma (y))= \sigma (x) \backsim \sigma (y);$ \\
$(IS_4)$ $\sigma(\sigma (x) \wedge \sigma (y))= \sigma (x) \wedge \sigma (y)$. \\
Then: \\
$(i)$ $\sigma$ is called an \emph{internal state of type I} or a \emph{state operator of type I} or 
a \emph{type I state operator} if it satisfies axioms $(IS_1)$, $(IS_2)$, $(IS_3), (IS_4);$ \\    
$(ii)$ $\sigma$ is called an \emph{internal state of type II} or a \emph{state operator of type II} or a 
\emph{type II state operator} if it satisfies axioms $(IS_1)$, $(IS^{'}_2)$, $(IS_3), (IS_4)$. \\
The structure $(A,\wedge,\thicksim,\backsim,\sigma,1)$ ($(A,\sigma)$, for short) is called a 
\emph{state pseudo equality algebra of type I (type II)}, respectively. 
Denote $\mathcal{IS}_{EQA}^{(I)}(A)$ and $\mathcal{IS}_{EQA}^{(II)}(A)$ the set of all internal states of 
type I and II on a pseudo equality algebra $A$, respectively. \\ 
Let ${\bf 1}_A, \Id_A:A\longrightarrow A$, defined by ${\bf 1}_A(x)=1$ and $\Id_A(x)=x$ for all $x\in A$. Then \\
${\bf 1}_A\in \mathcal{IS}_{EQA}^{(I)}(A), \mathcal{IS}_{EQA}^{(II)}(A)$ and $\Id_A \in \mathcal{IS}_{EQA}^{(I)}(A)$, 
but in general $\Id_A \notin \mathcal{IS}_{EQA}^{(II)}(A)$. \\
For $\sigma \in \mathcal{IS}_{EQA}^{(I)}(A)$ or $\sigma \in \mathcal{IS}_{EQA}^{(II)}(A)$, 
$\Ker(\sigma)=\{x\in A \mid \sigma(x)=1\}$ is called the \emph{kernel} of $\sigma$. 
An internal state of type I or type II is called \emph{faithful} if $\Ker(\sigma)=\{1\}$. 
In this case $(A,\sigma)$ is called a \emph{faithful state pseudo equality algebra of type I (type II)}, 
respectively. \\
A \emph{state-morphism operator} on a pseudo equality algebra $A$ is a map $\sigma:A\longrightarrow A$ satisfying the following conditions for all $x, y\in A$: \\
$(SM_1)$ $\sigma(x\thicksim y)=\sigma(x)\thicksim \sigma(y);$ \\
$(SM_2)$ $\sigma(x\backsim y)=\sigma(x)\backsim \sigma(y);$ \\
$(SM_3)$ $\sigma(x\wedge y)=\sigma(x)\wedge \sigma(y);$ \\
$(SM_4)$ $\sigma(\sigma(x))=\sigma(x)$. \\
The pair $(A, \sigma)$ is called a \emph{state-morphism pseudo equality algebra}.

\begin{prop} \label{is-eq-10} $\rm($\cite{Ciu8}$\rm)$ If $(A,\sigma)$ is a state pseudo equality algebra of 
type I or type II, then the following hold:\\
$(1)$ $\sigma(1)=1;$ \\
$(2)$ $\sigma(\sigma(x))=\sigma(x)$, for all $x\in A;$ \\
$(3)$ $\sigma(A)=\{x\in A \mid x=\sigma(x)\};$ \\ 
$(4)$ $\sigma(A)$ is a subalgebra of $A;$ \\
$(5)$ $\Ker(\sigma)\cap \Img(\sigma)=\{1\};$ \\
$(6)$ $\Ker(\sigma)\in {\mathcal DS}(A);$ \\
$(7)$ if $A$ is invariant, then $\Ker(\sigma)$ is a subalgebra of $A$.
\end{prop}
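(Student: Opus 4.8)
The plan is to prove the seven items essentially in the stated order, deriving each from the state-operator axioms together with the basic pseudo equality algebra identities recalled in Section 2; the arguments for $(1)$--$(5)$ are uniform across type I and type II, since they only invoke the shared axioms $(IS_3)$ and $(IS_4)$. For $(1)$ I would specialize the first identity of $(IS_3)$ to $y:=x$: as $\sigma(x)\thicksim\sigma(x)=1$ by $(A_2)$, the equation $\sigma(\sigma(x)\thicksim\sigma(x))=\sigma(x)\thicksim\sigma(x)$ becomes $\sigma(1)=1$. For $(2)$ I would specialize $(IS_4)$ to $y:=x$ and use idempotence of $\wedge$, so that $\sigma(\sigma(x)\wedge\sigma(x))=\sigma(x)\wedge\sigma(x)$ reads $\sigma(\sigma(x))=\sigma(x)$.

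Items $(3)$, $(4)$ and $(5)$ are then formal. For $(3)$ the inclusion $\{x\mid x=\sigma(x)\}\subseteq\sigma(A)$ is immediate, and the reverse uses $(2)$: if $y=\sigma(x)$ then $\sigma(y)=\sigma(\sigma(x))=\sigma(x)=y$. For $(4)$ I would exploit the description in $(3)$ of $\sigma(A)$ as the set of fixed points and check closure: $1\in\sigma(A)$ by $(1)$; writing two members of $\sigma(A)$ as $\sigma(x),\sigma(y)$, axiom $(IS_3)$ shows $\sigma(x)\thicksim\sigma(y)$ and $\sigma(x)\backsim\sigma(y)$ are fixed points, and $(IS_4)$ shows $\sigma(x)\wedge\sigma(y)$ is a fixed point, so all lie in $\sigma(A)$. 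For $(5)$, any $x\in\Ker(\sigma)\cap\Img(\sigma)$ is a fixed point by $(3)$ with $\sigma(x)=1$, whence $x=\sigma(x)=1$; the reverse inclusion is $(1)$.

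The substantial part is $(6)$. Conditions $(DS_1)$ and $(DS_2)$ are quick: $(DS_1)$ is $(1)$, and for $(DS_2)$, if $\sigma(x)=1$ and $x\le y$ then $(IS_1)$ gives $1=\sigma(x)\le\sigma(y)$, forcing $\sigma(y)=1$. Rather than $(DS_3)$ directly I would verify the equivalent condition $(DS_4)$ of Lemma \ref{ps-eq-160}: assume $x,\,x\wedge y\thicksim x\in\Ker(\sigma)$, that is $\sigma(x)=\sigma(x\wedge y\thicksim x)=1$. By Proposition \ref{ps-eq-20}$(3)$, $x\le(x\wedge y\thicksim x)\backsim y$, so applying $(IS_1)$ to $\sigma(x)=1$ yields $\sigma((x\wedge y\thicksim x)\backsim y)=1$. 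Feeding this into the first identity of $(IS_2)$ and using $(A_3)$,
\[
1=\sigma(x\wedge y\thicksim x)=\sigma(y)\thicksim\sigma((x\wedge y\thicksim x)\backsim y)=\sigma(y)\thicksim 1=\sigma(y),
\]
so $y\in\Ker(\sigma)$, which is $(DS_4)$. I expect the main obstacle to be precisely this collapse: one must notice that monotonicity forces the factor $\sigma((x\wedge y\thicksim x)\backsim y)$ to equal $1$, so that $(A_3)$ reduces the right-hand side to $\sigma(y)$. For type II the identical computation goes through with the first identity of $(IS^{\prime}_2)$, where $(x\wedge y\thicksim y)\backsim x$ replaces $(x\wedge y\thicksim x)\backsim y$ and the needed inequality $x\le(x\wedge y\thicksim y)\backsim x$ comes from Proposition \ref{ps-eq-30-20}$(4)$.

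Finally, $(7)$ is immediate from $(6)$: since $A$ is invariant and $\Ker(\sigma)\in{\mathcal DS}(A)$, the result recalled in Section 2 that every deductive system of an invariant pseudo equality algebra is a subalgebra (\cite{Ciu8}) shows that $\Ker(\sigma)$ is a subalgebra of $A$.
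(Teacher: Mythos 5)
Your proposal is correct in every step: (1)--(5) follow exactly as you derive them from $(IS_3)$, $(IS_4)$ and $(A_2)$; your verification of $(DS_1)$, $(DS_2)$ and $(DS_4)$ for $\Ker(\sigma)$ is valid, with the key collapse $\sigma((x\wedge y\thicksim x)\backsim y)=1$ (resp. $\sigma((x\wedge y\thicksim y)\backsim x)=1$ in the type II case) correctly justified by Proposition \ref{ps-eq-20}$(3)$ (resp. Proposition \ref{ps-eq-30-20}$(4)$) together with $(IS_1)$, after which $(A_3)$ reduces $(IS_2)$ (resp. $(IS_2')$) to $\sigma(y)=1$; and (7) is indeed immediate from (6) and the fact, recalled in Section 2, that deductive systems of invariant pseudo equality algebras are subalgebras. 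Note that this paper states Proposition \ref{is-eq-10} without proof, citing \cite{Ciu8}, so there is no in-paper argument to compare against; your reconstruction is the natural one and is complete as written.
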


\begin{prop} \label{is-eq-20} Let $(A,\sigma)$ be a state pseudo equality algebra. 
Then $\mathcal{IS}_{EQA}^{(I)}(A)=\mathcal{IS}_{EQA}^{(II)}(A)$ if and only if $A$ is a commutative pseudo 
equality algebra. 
\end{prop}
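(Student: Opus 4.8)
The plan is to derive both directions from a single observation: the type~I and type~II axioms differ only in which of $x\vee_1 y=(x\wedge y\thicksim x)\backsim y$ or $y\vee_1 x=(x\wedge y\thicksim y)\backsim x$ appears inside $\sigma$ (and, symmetrically, $x\vee_2 y$ versus $y\vee_2 x$). Rewriting $(IS_2)$ in the $\vee_1,\vee_2$ notation gives $\sigma(x\wedge y\thicksim x)=\sigma(y)\thicksim\sigma(x\vee_1 y)$ and $\sigma(x\backsim x\wedge y)=\sigma(x\vee_2 y)\backsim\sigma(y)$, whereas $(IS'_2)$ reads exactly the same but with $x\vee_1 y$ and $x\vee_2 y$ replaced by $y\vee_1 x$ and $y\vee_2 x$. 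Since $(IS_1)$, $(IS_3)$ and $(IS_4)$ are common to both notions, the entire discrepancy between being a type~I and a type~II state is confined to these two arguments of $\sigma$.

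For the ``if'' part I would assume $A$ commutative. By Definition~\ref{com-eq-10} (in its $\vee_1,\vee_2$ formulation) this means $x\vee_1 y=y\vee_1 x$ and $x\vee_2 y=y\vee_2 x$ for all $x,y\in A$, whence $\sigma(x\vee_1 y)=\sigma(y\vee_1 x)$ and $\sigma(x\vee_2 y)=\sigma(y\vee_2 x)$ for every unary operator $\sigma$. Thus $(IS_2)$ and $(IS'_2)$ become literally the same pair of identities, and together with the shared axioms this shows that a map $\sigma$ meets the type~I requirements if and only if it meets the type~II ones; hence $\mathcal{IS}_{EQA}^{(I)}(A)=\mathcal{IS}_{EQA}^{(II)}(A)$.

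For the converse I would use the identity operator as a probe. As recalled before the statement, $\Id_A\in\mathcal{IS}_{EQA}^{(I)}(A)$ holds in every pseudo equality algebra, so equality of the two sets forces $\Id_A\in\mathcal{IS}_{EQA}^{(II)}(A)$. Since $\Id_A$ satisfies $(IS_1)$, $(IS_3)$ and $(IS_4)$ trivially, this membership is governed solely by $(IS'_2)$, which for $\sigma=\Id_A$ asserts $x\wedge y\thicksim x=y\thicksim((x\wedge y\thicksim y)\backsim x)$ and $x\backsim x\wedge y=(x\thicksim(y\backsim x\wedge y))\backsim y$ for all $x,y\in A$. These are precisely the two identities of condition~$(b)$ in Theorem~\ref{com-eq-70}, so that theorem yields the commutativity of $A$. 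The one genuine step is this identification of $(IS'_2)$ at the identity with condition~$(b)$; once it is made both implications are immediate, and I expect the only real care to be in keeping the $\vee_1/\vee_2$ bookkeeping (and the left/right asymmetry of $\thicksim$ and $\backsim$) straight.
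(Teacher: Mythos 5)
Your proposal is correct and follows essentially the same route as the paper's proof: the forward direction observes that commutativity makes $(IS_2)$ and $(IS'_2)$ coincide (since $x\vee_1 y=y\vee_1 x$ and $x\vee_2 y=y\vee_2 x$), and the converse probes with $\Id_A\in\mathcal{IS}_{EQA}^{(I)}(A)$ to extract exactly condition $(b)$ of Theorem \ref{com-eq-70}. Your $\vee_1,\vee_2$ bookkeeping is accurate, so nothing is missing.
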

\begin{proof} 
Since $A$ is commutative, then axioms $(IS_2)$ and $(IS^{'}_2)$ coincide, so    
$\mathcal{IS}_{EQA}^{(I)}(A)=\mathcal{IS}_{EQA}^{(II)}(A)$. 
Conversely, if $\mathcal{IS}_{EQA}^{(I)}(A)=\mathcal{IS}_{EQA}^{(II)}(A)$, since 
$\Id_A\in \mathcal{IS}_{EQA}^{(I)}(A)$, it follows that $\Id_A\in \mathcal{IS}_{EQA}^{(II)}(A)$.   
From axiom $(IS^{'}_2)$ we get 
$x\wedge y\thicksim x=y\thicksim ((x\wedge y\thicksim y) \backsim x)$ and 
$x\backsim x\wedge y=(x\thicksim (y\backsim x\wedge y)) \backsim y$.  
Hence by Theorem \ref{com-eq-70}, $A$ is commutative. 
\end{proof}

\begin{prop} \label{is-eq-30} Let $(A,\sigma)$ be a faithful commutative state pseudo equality algebra 
of type I or type II and $x, y\in A$. Then: \\
$(1)$ $x< y$ implies $\sigma(x)< \sigma(y);$ \\
$(2)$ if $A$ is linearly ordered, then $\sigma=Id_A$.  
\end{prop}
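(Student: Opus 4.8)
The plan is to establish (1) first and then obtain (2) as a short consequence. For (1), note that $x<y$ means $x\le y$ and $x\ne y$, so axiom $(IS_1)$ already gives $\sigma(x)\le\sigma(y)$; the entire content of (1) is therefore the \emph{strictness}, i.e. ruling out $\sigma(x)=\sigma(y)$. Since $A$ is commutative, Proposition \ref{is-eq-20} tells us that the axioms $(IS_2)$ and $(IS'_2)$ coincide, so whether $\sigma$ is of type I or of type II it satisfies $(IS_2)$, and this is the axiom I would exploit.

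The key step is to coax a usable identity for $\sigma$ out of $(IS_2)$. Writing $(IS_2)$ with the roles of $x$ and $y$ interchanged, its first identity reads $\sigma(y\wedge x\thicksim y)=\sigma(x)\thicksim\sigma((y\wedge x\thicksim y)\backsim x)$. Using $x\le y$ (so that $y\wedge x=x$), the left-hand side collapses to $\sigma(x\thicksim y)$, while the inner argument on the right becomes $(x\thicksim y)\backsim x$, which equals $y$ by Corollary \ref{com-ds-80} (this is exactly where commutativity is used). This yields the clean relation $\sigma(x\thicksim y)=\sigma(x)\thicksim\sigma(y)$. Now if $\sigma(x)=\sigma(y)$, then by $(A_2)$ the right-hand side equals $1$, so $x\thicksim y\in\Ker(\sigma)$; faithfulness ($\Ker(\sigma)=\{1\}$) forces $x\thicksim y=1$, and then Proposition \ref{ps-eq-10}(3) gives $y\le x$. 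Together with $x\le y$ this yields $x=y$, contradicting $x<y$. Hence $\sigma(x)<\sigma(y)$.

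The step I expect to be the main obstacle is precisely this extraction of $\sigma(x\thicksim y)=\sigma(x)\thicksim\sigma(y)$: the state axioms are phrased through the derived joins $\vee_1,\vee_2$ rather than through $\thicksim$ directly, so one must choose exactly the substitution for which the nested $\vee_1$-term on the right simplifies back to $\sigma(y)$. This is the point at which both hypotheses genuinely enter --- commutativity (via Corollary \ref{com-ds-80}) to reduce the argument, and faithfulness to turn membership in $\Ker(\sigma)$ into an honest equality.

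For (2), I would argue pointwise. Fix $x\in A$; since $A$ is linearly ordered, $x$ and $\sigma(x)$ are comparable. If $x<\sigma(x)$, applying (1) gives $\sigma(x)<\sigma(\sigma(x))$, while if $\sigma(x)<x$, applying (1) gives $\sigma(\sigma(x))<\sigma(x)$; in either case the idempotency $\sigma(\sigma(x))=\sigma(x)$ from Proposition \ref{is-eq-10}(2) turns this into $\sigma(x)<\sigma(x)$, which is absurd. Therefore $\sigma(x)=x$ for every $x$, that is $\sigma=\Id_A$.
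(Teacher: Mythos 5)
Your proof is correct and takes essentially the same route as the paper: both apply $(IS_2)$ with the roles of $x$ and $y$ interchanged, use commutativity to reduce the nested term so that $\sigma(x\thicksim y)=\sigma(x)\thicksim\sigma(y)=1$, then invoke faithfulness and Proposition \ref{ps-eq-10}(3) to force $x=y$, with part (2) argued identically via idempotency of $\sigma$. The only cosmetic differences are that you handle types I and II uniformly through Proposition \ref{is-eq-20} and need only the $\thicksim$-chain, whereas the paper treats type II "similarly" and also computes $\sigma(x\backsim y)=1$ before concluding.
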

\begin{proof}
$(1)$ Consider $\sigma \in \mathcal{IS}_{EQA}^{(I)}(A)$ and $x, y\in A$ such that $x< y$, 
that is $\sigma(x)\le \sigma(y)$. \\
Suppose that $\sigma(x)=\sigma(y)$. Applying $(IS_2)$ and the commutativity of $A$ we get: \\
$\hspace*{2cm}$
$\sigma(x\thicksim y)=\sigma(x\wedge y\thicksim y)=\sigma(x)\thicksim \sigma((x\wedge y\thicksim y)\backsim x)$ \\
$\hspace*{3.6cm}$
$=\sigma(x)\thicksim \sigma((x\wedge y\thicksim x)\backsim y)=\sigma(x)\thicksim \sigma(y)=1$ and \\
$\hspace*{2cm}$
$\sigma(x\backsim y)=\sigma(x\backsim x\wedge y)=\sigma(y\thicksim (x\backsim x\wedge y))\backsim \sigma(y)= 
\sigma(y)\backsim \sigma(y)=1$. \\
It follows that $x\thicksim y, x\backsim y\in \Ker(\sigma)$, that is $x\thicksim y=x\backsim y=1$. 
According to Proposition \ref{ps-eq-10}$(3)$, we get $x=y$, a contradiction. Hence $\sigma(x)<\sigma(y)$. 
Similarly for $\sigma \in \mathcal{IS}_{EQA}^{(II)}(A)$. \\
$(2)$ Assume that $A$ is linearly ordered and let $x\in A$ such that $\sigma(x)\ne x$, that is 
$\sigma(x)<x$ or $x<\sigma(x)$. Applying $(1)$ we get $\sigma(\sigma(x))<\sigma(x)$ or 
$\sigma(x)<\sigma(\sigma(x))$. This a contradiction, so $\sigma(x)=x$ for all $x\in A$, that is $\sigma=Id_A$.   
\end{proof}

\begin{theo} \label{is-eq-40} Let $(A,\sigma)$ be a commutative symmetric linearly ordered state pseudo equality 
algebra. Then $\mathcal{IS}_{EQA}^{(I)}(A)=\mathcal{IS}_{EQA}^{(II)}(A)=\mathcal{SM}_{EQA}(A)$.
\end{theo}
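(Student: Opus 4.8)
The plan is to prove the two equalities in turn. Since $A$ is commutative, Proposition~\ref{is-eq-20} gives at once $\mathcal{IS}_{EQA}^{(I)}(A)=\mathcal{IS}_{EQA}^{(II)}(A)$, because on a commutative algebra the axioms $(IS_2)$ and $(IS_2')$ coincide. It then remains to identify this common set with $\mathcal{SM}_{EQA}(A)$, which I would do by establishing both inclusions.

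For $\mathcal{SM}_{EQA}(A)\subseteq\mathcal{IS}_{EQA}^{(I)}(A)$ I would take a state-morphism operator $\sigma$ and verify $(IS_1)$--$(IS_4)$ directly from $(SM_1)$--$(SM_4)$; this needs neither symmetry nor linearity. Axiom $(IS_1)$ follows from $(SM_3)$, since $x\le y$ gives $\sigma(x)=\sigma(x\wedge y)=\sigma(x)\wedge\sigma(y)$. For $(IS_2)$ I would rewrite its left-hand sides through the identities $x\wedge y\thicksim x=y\thicksim(x\vee_1 y)$ and $x\backsim x\wedge y=(x\vee_2 y)\backsim y$ of Proposition~\ref{ps-eq-30-20}$(6)$ and then apply $(SM_1)$ and $(SM_2)$; axioms $(IS_3)$ and $(IS_4)$ collapse to $(SM_1)$--$(SM_3)$ combined with the idempotency $(SM_4)$.

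The substantial direction is $\mathcal{IS}_{EQA}^{(I)}(A)\subseteq\mathcal{SM}_{EQA}(A)$. Fix $\sigma\in\mathcal{IS}_{EQA}^{(I)}(A)=\mathcal{IS}_{EQA}^{(II)}(A)$. Using $(IS_2)$ together with Corollary~\ref{com-ds-80} and linearity I would first record the \emph{implication-direction} identities: for $a\le b$ one has $(a\thicksim b)\backsim a=b$, so $(IS_2)$ applied to the pair $(b,a)$ gives $\sigma(a\thicksim b)=\sigma(a)\thicksim\sigma(b)$, and dually $\sigma(a\backsim b)=\sigma(a)\backsim\sigma(b)$ whenever $b\le a$. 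The main obstacle is that these are precisely the cases in which $\thicksim$ (resp. $\backsim$) behaves like an implication: the ``reverse'' quantities $\sigma(x\thicksim y)$ with $y\le x$ (and $\sigma(x\backsim y)$ with $x\le y$) never occur on the left-hand side of $(IS_2)$ or $(IS_2')$, so they are not pinned down directly, and $x\thicksim y$ really does carry extra information there, since it need not equal $1$ (as the \L ukasiewicz chain shows).

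I would overcome this using the kernel congruence. By Proposition~\ref{is-eq-10}$(6)$, $\Ker(\sigma)\in\mathcal{DS}(A)$, and since $A$ is symmetric the normality condition $(DS_5)$ holds automatically, because $x\backsim y=y\thicksim x$ makes its two sides identical; hence $\Ker(\sigma)$ is normal and $\Theta:=\Theta_{\Ker(\sigma)}$ is a congruence. Two facts, each proved only from the implication-direction identity, monotonicity $(IS_1)$ and linearity, drive the argument: (i) $\sigma$ is constant on every $\Theta$-class, for if $a\,\Theta\,b$ with, say, $a\le b$, then $\sigma(a)\thicksim\sigma(b)=\sigma(a\thicksim b)=1$, and Proposition~\ref{ps-eq-10}$(3)$ with $\sigma(a)\le\sigma(b)$ forces $\sigma(a)=\sigma(b)$; and (ii) $\sigma(x)\,\Theta\,x$ for every $x$, since comparing $x$ with $\sigma(x)$ in the chain and applying the implication-direction identity to $\sigma(x)\thicksim x$ (or to $x\thicksim\sigma(x)$), together with $\sigma(\sigma(x))=\sigma(x)$, places the relevant equality in $\Ker(\sigma)$. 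Granting (i) and (ii), for arbitrary $x,y$ the congruence axioms $(CG_1)$ and $(CG_2)$ yield $\sigma(x)\ast\sigma(y)\,\Theta\,x\ast y$ for $\ast\in\{\wedge,\thicksim,\backsim\}$; applying (i) gives $\sigma(\sigma(x)\ast\sigma(y))=\sigma(x\ast y)$, and $(IS_3)$, $(IS_4)$ identify the left-hand side with $\sigma(x)\ast\sigma(y)$. This delivers $(SM_1)$, $(SM_2)$, $(SM_3)$ simultaneously, while $(SM_4)$ is Proposition~\ref{is-eq-10}$(2)$, so $\sigma\in\mathcal{SM}_{EQA}(A)$. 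In this scheme linearity is what makes (i) and (ii) work, and symmetry is exactly what guarantees the normality of $\Ker(\sigma)$ needed to form the congruence $\Theta$.
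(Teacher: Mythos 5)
Your reduction is largely sound: the first equality via Proposition \ref{is-eq-20}, the direct verification that every state-morphism operator satisfies $(IS_1)$--$(IS_4)$ (which indeed needs neither symmetry nor linearity), fact (i), and the final assembly from (i) and (ii) via $(CG_1)$, $(CG_2)$, $(IS_3)$, $(IS_4)$ are all correct, as is the observation that symmetry makes $(DS_5)$ automatic, so that $\Theta=\Theta_{\Ker(\sigma)}$ is a congruence. The genuine gap is fact (ii), the claim $\sigma(x)\,\Theta\,x$. Membership in $\Theta_{\Ker(\sigma)}$ requires \emph{both} $x\thicksim\sigma(x)\in\Ker(\sigma)$ \emph{and} $\sigma(x)\thicksim x\in\Ker(\sigma)$. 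Say $x\le\sigma(x)$ (the other case is the mirror image). The implication-direction identity gives $\sigma(x\thicksim\sigma(x))=\sigma(x)\thicksim\sigma(\sigma(x))=1$, and dually $\sigma(\sigma(x)\backsim x)=1$; but by symmetry $\sigma(x)\backsim x$ \emph{is} the element $x\thicksim\sigma(x)$, so these two computations control one and the same element. The second required element, $\sigma(x)\thicksim x=x\backsim\sigma(x)$, is a ``reverse'' value (first argument above the second for $\thicksim$), i.e.\ exactly the kind of element you yourself flagged as not pinned down by $(IS_2)$, $(IS_2')$. Nothing you invoke (idempotence $\sigma(\sigma(x))=\sigma(x)$, monotonicity, linearity) yields $\sigma(\sigma(x)\thicksim x)=1$: the lower bounds of $\sigma(x)\thicksim x$ accessible from the axioms, such as $1\thicksim x$ obtained from $(A_5)$ with $z=\sigma(x)$, are themselves reverse values, so the upset property $(DS_2)$ of $\Ker(\sigma)$ cannot close the loop. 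Fact (ii) therefore quietly assumes control of $\sigma$ on a reverse value, which was the whole difficulty; the argument is circular at precisely the point it was designed to handle.

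For comparison, the paper's proof is a short direct computation: for $x\le y$ it derives $\sigma(x\thicksim y)=\sigma(x)\thicksim\sigma(y)$ from $(IS_2)$ and commutativity (your implication-direction identity), and then writes $\sigma(x\backsim y)=\sigma(y\thicksim x)=\sigma(y)\thicksim\sigma(x)=\sigma(x)\backsim\sigma(y)$, the middle equality being exactly the reverse-value identity at issue (it is immediate when $A$ is invariant, since then $y\thicksim x=1$ for $x\le y$, but it is not derived for a general symmetric algebra). So your diagnosis of the obstacle is accurate, and your congruence machinery is a genuinely different route from the paper's; but as written it does not overcome the obstacle, it only relocates the unproved statement into fact (ii). One concrete way to complete your scheme would be to prove the auxiliary inequality that $u\le v$ implies $u\thicksim v\le v\thicksim u$; then $\sigma(x)\thicksim x\ge x\thicksim\sigma(x)\in\Ker(\sigma)$ and $(DS_2)$ would finish (ii). No such statement is available in the paper, so it would have to be established from the axioms before your proof can stand.
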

\begin{proof}
According to \cite[Rem. 7.9]{Ciu8}, 
$\mathcal{SM}_{EQA}(A)\subseteq \mathcal{IS}_{EQA}^{(I)}(A)=\mathcal{IS}_{EQA}^{(II)}(A)$. \\
Conversely, by Proposition \ref{is-eq-20}, $\mathcal{IS}_{EQA}^{(I)}(A)=\mathcal{IS}_{EQA}^{(II)}(A)$. \\
Consider $\sigma \in \mathcal{IS}_{EQA}^{(I)}(A)$ and $x, y\in A$ such that $x\le y$. We have: \\
$\hspace*{2cm}$
$\sigma(x\thicksim y)=\sigma(x\wedge y\thicksim y)=\sigma(x)\thicksim \sigma((x\wedge y\thicksim y)\backsim x)$ \\
$\hspace*{3.6cm}$
$=\sigma(x)\thicksim \sigma((x\wedge y\thicksim x)\backsim y)=\sigma(x)\thicksim \sigma(y)$ and  \\
$\hspace*{2cm}$
$\sigma(x\backsim y)=\sigma(y\thicksim x)=\sigma(y)\thicksim \sigma(x)=\sigma(x)\backsim \sigma(y)$. \\
Since by $(IS_1)$, $\sigma(x)\le \sigma(y)$, we get $\sigma(x\wedge y)=\sigma(x)=\sigma(x)\wedge \sigma(y)$. \\
The case $y\le x$ can be treated similarly, thus $\sigma$ satisfies axioms $(SM_1)$, $(SM_2)$,$(SM_3)$. \\
Since axiom $(SM_4)$ follows by Proposion \ref{is-eq-10}$(2)$, we proved that $\sigma\in \mathcal{SM}_{EQA}(A)$. 
Hence $\mathcal{SM}_{EQA}(A)\subseteq \mathcal{IS}_{EQA}^{(I)}(A)=\mathcal{IS}_{EQA}^{(II)}(A)$ and we conclude that  $\mathcal{IS}_{EQA}^{(I)}(A)=\mathcal{IS}_{EQA}^{(II)}(A)=\mathcal{SM}_{EQA}(A)$.
\end{proof}

\begin{cor} \label{is-eq-50} If $(A,\sigma)$ is a finite invariant commutative linearly ordered state pseudo equality 
algebra, then $\mathcal{IS}_{EQA}^{(I)}(A)=\mathcal{IS}_{EQA}^{(II)}(A)=\mathcal{SM}_{EQA}(A)$.
\end{cor}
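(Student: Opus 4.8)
The plan is to reduce this corollary directly to Theorem \ref{is-eq-40}, whose conclusion is literally the equality $\mathcal{IS}_{EQA}^{(I)}(A)=\mathcal{IS}_{EQA}^{(II)}(A)=\mathcal{SM}_{EQA}(A)$ that I am trying to establish. The hypotheses of that theorem are that $(A,\sigma)$ is a \emph{commutative symmetric linearly ordered} state pseudo equality algebra, whereas here I am handed a \emph{finite invariant commutative linearly ordered} one. Commutativity and linear orderedness are common to both, so the only gap to close is to replace the pair of hypotheses ``finite invariant'' with the single hypothesis ``symmetric''.

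First I would observe that this replacement is exactly the content of Proposition \ref{com-eq-50}, which asserts that every finite invariant commutative pseudo equality algebra is symmetric. Since $A$ is by assumption finite, invariant, and commutative, Proposition \ref{com-eq-50} applies and yields that $A$ is a symmetric pseudo equality algebra, i.e.\ $x\backsim y = y\thicksim x$ for all $x,y\in A$.

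Having secured symmetry, the state pseudo equality algebra $(A,\sigma)$ is now commutative, symmetric, and linearly ordered, so all hypotheses of Theorem \ref{is-eq-40} are met. Applying that theorem gives $\mathcal{IS}_{EQA}^{(I)}(A)=\mathcal{IS}_{EQA}^{(II)}(A)=\mathcal{SM}_{EQA}(A)$, which is the desired conclusion. There is no real obstacle here: the corollary is a two-line specialization obtained by inserting Proposition \ref{com-eq-50} to manufacture symmetry and then quoting Theorem \ref{is-eq-40}. The only thing to be careful about is verifying that ``finite invariant commutative'' genuinely licenses the use of Proposition \ref{com-eq-50} (it does, as those are precisely its standing hypotheses), after which the chain of equalities follows immediately.
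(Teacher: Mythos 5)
Your proof is correct and is exactly the paper's own argument: the paper's proof of this corollary consists of citing Proposition \ref{com-eq-50} (to get symmetry from finiteness, invariance, and commutativity) followed by Theorem \ref{is-eq-40}, precisely as you do.
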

\begin{proof}
It follows by Proposition \ref{com-eq-50} and Theorem \ref{is-eq-40}. 
\end{proof}

$\vspace*{5mm}$

\section{Valuations on pseudo equality algebras} 

In this section the notions of pseudo-valuation and commutative pseudo-valuation on pseudo equality algebras are defined and investigated. Given a pseudo equality algebra $A$, it is proved that the kernel of a commutative pseudo-valuation on $A$ is a commutative deductive system of $A$. If moreover $A$ is commutative, then we prove 
that any pseudo-valuation on $A$ is commutative. \\
In what follows by $A$ we will denote a pseudo equality algebra.  

\begin{Def} \label{va-eq-10} A real-valued function $\varphi:A\longrightarrow {\mathbb R}$ is called a 
\emph{pseudo-valuation} on $A$ if it satisfies the following conditions: \\
$(pv_1)$ $\varphi(1)=0;$ \\
$(pv_2)$ $\varphi(y)-\varphi(x)\le \min\{\varphi(x\wedge y\thicksim x), \varphi(x\backsim x\wedge y)\}$  
for all $x, y\in A$. \\
A pseudo-valuation $\varphi$ is said to be a \emph{valuation} if it satisfies the condition: \\
$(pv_3)$ $v(x)=0$ implies $x=1$ for all $x\in A$. 
\end{Def}

Denote $\mathcal{PV}_{EQA}(A)$ the set of all pseudo-valuations on $A$. 

\begin{prop} \label{va-eq-20} If $\varphi\in \mathcal{PV}_{EQA}(A)$, then the following hold for all $x, y, z\in A$: \\
$(1)$ $\varphi(x)\ge \varphi(y)$, whenever $x\le y$ ($\varphi$ is order reversing); \\
$(2)$ $\varphi(x)\ge 0;$ \\
$(3)$ if $(y\backsim x\wedge y)\wedge z\thicksim z=1$ or $z\backsim (x\wedge y\thicksim y)\wedge z=1$, then 
      $\varphi(x)\le \varphi(y)+ \varphi(z)$. 
\end{prop}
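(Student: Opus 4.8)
The plan is to treat the three parts in order, the engine being the two axioms $(pv_1)$, $(pv_2)$ of Definition \ref{va-eq-10} together with the identities $x\wedge y\thicksim x=x\rightarrow y$ and $x\backsim x\wedge y=x\rightsquigarrow y$ that relate the equality operations to the induced implications.

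For $(1)$, I would note that $x\le y$ forces $x\wedge y=x$, so $(A_2)$ gives $x\wedge y\thicksim x=x\thicksim x=1$ and $x\backsim x\wedge y=x\backsim x=1$. Substituting into $(pv_2)$ and using $(pv_1)$ yields $\varphi(y)-\varphi(x)\le\min\{\varphi(1),\varphi(1)\}=0$, i.e.\ $\varphi(x)\ge\varphi(y)$. Part $(2)$ is then immediate: every $x$ satisfies $x\le 1$, so $(1)$ applied with $y:=1$ gives $\varphi(x)\ge\varphi(1)=0$.

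The substance is in $(3)$, where the first task is to translate the two hypotheses into order relations. Reading products before the equalities and using the definitions of $\rightarrow$ and $\rightsquigarrow$, I would rewrite $(y\backsim x\wedge y)\wedge z\thicksim z=z\rightarrow(y\rightsquigarrow x)$ and $z\backsim(x\wedge y\thicksim y)\wedge z=z\rightsquigarrow(y\rightarrow x)$. By Proposition \ref{ps-eq-10}$(5)$ the two hypotheses then read $z\le y\rightsquigarrow x$ and $z\le y\rightarrow x$ respectively, and the adjunction of Proposition \ref{ps-eq-10}$(10)$ converts these into $y\le z\rightarrow x$ and $y\le z\rightsquigarrow x$.

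To finish, I would apply $(pv_2)$ with $x:=z$, $y:=x$, obtaining $\varphi(x)-\varphi(z)\le\min\{\varphi(z\rightarrow x),\varphi(z\rightsquigarrow x)\}$ and hence both $\varphi(x)\le\varphi(z)+\varphi(z\rightarrow x)$ and $\varphi(x)\le\varphi(z)+\varphi(z\rightsquigarrow x)$. In the first case $y\le z\rightarrow x$ combined with the order-reversal from $(1)$ gives $\varphi(z\rightarrow x)\le\varphi(y)$, so $\varphi(x)\le\varphi(y)+\varphi(z)$; in the second case $y\le z\rightsquigarrow x$ gives $\varphi(z\rightsquigarrow x)\le\varphi(y)$ and the same bound. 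The only delicate point is getting the parenthesization of the two hypotheses correct and keeping track of which component of the minimum each case consumes; once the conditions are rendered as $z\le y\rightsquigarrow x$ and $z\le y\rightarrow x$, the estimates are routine.
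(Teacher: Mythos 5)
Your proof is correct, and parts $(1)$ and $(2)$ coincide with the paper's argument; part $(3)$, however, follows a genuinely different route. The paper stays entirely inside the axiom $(pv_2)$: it applies $(pv_2)$ to the pair $(z,\, y\backsim x\wedge y)$ so that the hypothesis $(y\backsim x\wedge y)\wedge z\thicksim z=1$ appears literally as the term whose $\varphi$-value is $0$, obtaining $\varphi(y\backsim x\wedge y)\le\varphi(z)$, and then applies $(pv_2)$ a second time to the pair $(y,x)$ to get $\varphi(x)-\varphi(y)\le\varphi(y\backsim x\wedge y)$, chaining the two inequalities (and dually for the other hypothesis). You instead move the hypothesis through the induced pseudo BCK structure: rewriting it as $z\rightarrow(y\rightsquigarrow x)=1$, i.e.\ $z\le y\rightsquigarrow x$ by Proposition \ref{ps-eq-10}$(5)$, then invoking the adjunction of Proposition \ref{ps-eq-10}$(10)$ to flip it into $y\le z\rightarrow x$, so that a \emph{single} application of $(pv_2)$ to the pair $(z,x)$ together with the order-reversal from part $(1)$ finishes the estimate. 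In effect you swap the roles of $y$ and $z$ via the exchange property, bounding $\varphi(z\rightarrow x)$ by $\varphi(y)$ where the paper bounds $\varphi(y\rightsquigarrow x)$ by $\varphi(z)$. The paper's route is more self-contained (it needs nothing beyond Definition \ref{va-eq-10} and $(A_2)$), while yours is arguably more conceptual, making visible that the hypotheses of $(3)$ are exactly the residuation-type inequalities $z\le y\rightsquigarrow x$ and $z\le y\rightarrow x$; the price is the dependence on Proposition \ref{ps-eq-10}$(5)$,$(10)$. One small point of care, which you handled correctly: the parenthesization $x\wedge y\thicksim x=(x\wedge y)\thicksim x$ (meet binding tighter) is the one consistent with the paper's own usage and proof, despite the paper's stated priority convention reading the other way.
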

\begin{proof}
For $x\le y$ we have $\varphi(y)-\varphi(x)\le \min\{\varphi(x\wedge y\thicksim x), \varphi(x\backsim x\wedge y)\}= 
\min \{\varphi(1),\varphi(1)\}=0$, that is $\varphi(x)\ge \varphi(y)$. \\
$(2)$ Since $x\le 1$, by $(1)$ we get $\varphi(x)\ge \varphi(1)=0$. \\
$(3)$ By $(pv_2)$ we have 
$\varphi(y\backsim x\wedge y)-\varphi(z)\le \varphi((y\backsim x\wedge y)\wedge z\thicksim z)=\varphi(1)=0$, so 
$\varphi(y\backsim x\wedge y)\le \varphi(z)$. Applying again $(pv_2)$ we get 
$\varphi(x)-\varphi(y)\le \varphi(y\backsim x\wedge y)\le \varphi(z)$, that is $\varphi(x)\le \varphi(y)+ \varphi(z)$. 
Similarly for the case $z\backsim (x\wedge y\thicksim y)\wedge z=1$. 
\end{proof}

\begin{ex} \label{va-eq-20-10} (see \cite{Bus3})
Let $D\in \mathcal{DS}(A)$ and the map $\varphi:A\longrightarrow {\mathbb R}$ defined by 
\begin{equation*}
\varphi(x)= \left\{
\begin{array}{cc}
	0,\:\: {\rm if} \: x \in D\\
	a,\:\: {\rm if} \: x \notin D,
\end{array}
\right.
\end{equation*}
where $a\in {\mathbb R}$, $a\ge 0$. Then $\varphi\in \mathcal{PV}_{EQA}(A)$. 
\end{ex}

For $\varphi\in \mathcal{PV}_{EQA}(A)$, denote $D_{\varphi}=\{x\in a\mid \varphi(x)=0\}$, called the 
\emph{kernel} of $\varphi$. 

\begin{prop} \label{va-eq-30} $D_{\varphi}\in \mathcal{DS}(A)$, for every $\varphi\in \mathcal{PV}_{EQA}(A)$. 
\end{prop}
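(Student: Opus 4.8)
The plan is to verify directly the three defining conditions $(DS_1)$, $(DS_2)$, $(DS_3)$ of a deductive system for the kernel $D_{\varphi}=\{x\in A\mid \varphi(x)=0\}$. The first two conditions are quick consequences of the elementary facts about pseudo-valuations collected in Proposition \ref{va-eq-20}, so the real content lies in the modus-ponens condition $(DS_3)$.

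For $(DS_1)$, note that $\varphi(1)=0$ by $(pv_1)$, hence $1\in D_{\varphi}$. For the upset condition $(DS_2)$, suppose $x\in D_{\varphi}$ and $x\le y$. Since $\varphi$ is order reversing by Proposition \ref{va-eq-20}$(1)$ we get $\varphi(y)\le \varphi(x)=0$, while Proposition \ref{va-eq-20}$(2)$ gives $\varphi(y)\ge 0$; hence $\varphi(y)=0$ and $y\in D_{\varphi}$.

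The main step is $(DS_3)$: assuming $x\in D_{\varphi}$ and $y\thicksim x\in D_{\varphi}$, I must deduce $y\in D_{\varphi}$. The key observation, which is exactly Proposition \ref{ps-eq-20}$(5)$ (used just as in the proof of Lemma \ref{ps-eq-160}), is that $y\thicksim x\le x\wedge y\thicksim x$. Because $\varphi$ is order reversing and nonnegative, from $\varphi(y\thicksim x)=0$ it follows that $\varphi(x\wedge y\thicksim x)=0$. Now applying $(pv_2)$ together with $\varphi(x)=0$ yields
\[
\varphi(y)=\varphi(y)-\varphi(x)\le \varphi(x\wedge y\thicksim x)=0,
\]
and combining this with the nonnegativity $\varphi(y)\ge 0$ forces $\varphi(y)=0$, i.e. $y\in D_{\varphi}$.

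I expect the only point requiring care to be the choice of the inequality linking $y\thicksim x$ to $x\wedge y\thicksim x$, which lets one discard the $\min$ in $(pv_2)$ in favour of its first component; once Proposition \ref{ps-eq-20}$(5)$ is invoked, the argument reduces to a squeeze between the order-reversing and nonnegativity properties of $\varphi$. Having verified $(DS_1)$, $(DS_2)$ and $(DS_3)$, we conclude that $D_{\varphi}\in \mathcal{DS}(A)$.
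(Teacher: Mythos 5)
Your proof is correct and follows essentially the same route as the paper: $(DS_1)$ and $(DS_2)$ are handled identically, and the modus-ponens step is the same squeeze combining $(pv_2)$ with the order-reversing and nonnegativity properties of $\varphi$. The only difference is one of packaging: the paper verifies the equivalent condition $(DS_4)$ of Lemma \ref{ps-eq-160}, so the hypothesis $x\wedge y\thicksim x\in D_{\varphi}$ feeds directly into $(pv_2)$, whereas you verify $(DS_3)$ directly and therefore insert the extra reduction $y\thicksim x\le x\wedge y\thicksim x$ from Proposition \ref{ps-eq-20}$(5)$ --- which is precisely the step the proof of Lemma \ref{ps-eq-160} already encapsulates.
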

\begin{proof}
Since by $(pv_1)$, $\varphi(1)=0$, it follows that $1\in D_{\varphi}$, that is $(DS_1)$ is satisfied. \\
Let $x, y \in A$ such that $x\le y$ and $x\in D_{\varphi}$. Then $0=\varphi(x)\ge \varphi(y)$, hence 
$\varphi(y)=0$, that is $y\in D_{\varphi}$. Thus $(DS_2)$ is also verified. 
Consider $x, y\in A$ such that $x, x\wedge y\thicksim x\in D_{\varphi}$. It follows that 
$\varphi(y)-\varphi(x)\le \varphi(x\wedge y\thicksim x)$, so $\varphi(y)-0\le 0$, hence $\varphi(y)=0$. 
Thus $y\in D_{\varphi}$ and $(DS_4)$ is satisfied. We conclude that $D_{\varphi}\in \mathcal{DS}_{EQA}(A)$. 
\end{proof}

\begin{Def} \label{va-eq-40} A pseudo-valuation $\varphi$ on $A$ is said to be \emph{commutative} if it satisfies the following conditions for all $x, y\in A$: \\
$(cpv_1)$ $\varphi(x\thicksim x\vee_1 y)\le \varphi(x\wedge y\thicksim y),$ \\
$(cpv_2)$ $\varphi(x\vee_2 y\backsim x)\le \varphi(y\backsim x\wedge y)$. 
\end{Def}

Denote $\mathcal{PV}^{c}_{EQA}(A)$ the set of all commutative pseudo-valuations on $A$. 

\begin{prop} \label{va-eq-50} A pseudo-valuation $\varphi$ on $A$ is commutative if and only if it satisfies the following conditions for all $x, y, z\in A:$ \\
$(cpv_3)$ $\varphi(x\thicksim x\vee_1 y)\le \varphi(z\backsim (x\wedge y\thicksim y)\wedge z)+\varphi(z),$ \\
$(cpv_4)$ $\varphi(x\vee_2 y\backsim x)\le \varphi((y\backsim x\wedge y)\wedge z\thicksim z)+\varphi(z)$.
\end{prop}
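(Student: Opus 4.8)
The plan is to establish the two equivalences $(cpv_1)\Leftrightarrow(cpv_3)$ and $(cpv_2)\Leftrightarrow(cpv_4)$ separately, since by Definition \ref{va-eq-40} the pseudo-valuation $\varphi$ is commutative precisely when both $(cpv_1)$ and $(cpv_2)$ hold. The common engine will be axiom $(pv_2)$ specialised in its first slot: for any $w\in A$, applying $(pv_2)$ with $x:=z$ and with its second variable set to $w$ yields $\varphi(w)-\varphi(z)\le\min\{\varphi(z\wedge w\thicksim z),\varphi(z\backsim z\wedge w)\}$, hence both $\varphi(w)\le\varphi(z\backsim z\wedge w)+\varphi(z)$ and $\varphi(w)\le\varphi(z\wedge w\thicksim z)+\varphi(z)$. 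Using commutativity of $\wedge$ to rewrite $z\backsim z\wedge w=z\backsim w\wedge z$ and $z\wedge w\thicksim z=w\wedge z\thicksim z$, these two bounds are exactly the right-hand terms occurring in $(cpv_3)$ and $(cpv_4)$.

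For the forward implication I would assume $\varphi$ commutative. To derive $(cpv_3)$, first invoke $(cpv_1)$ to bound $\varphi(x\thicksim x\vee_1 y)\le\varphi(x\wedge y\thicksim y)$, then substitute $w:=x\wedge y\thicksim y$ into the inequality $\varphi(w)\le\varphi(z\backsim z\wedge w)+\varphi(z)$ above; chaining the two gives $(cpv_3)$. The derivation of $(cpv_4)$ is the mirror image: start from $(cpv_2)$, i.e. $\varphi(x\vee_2 y\backsim x)\le\varphi(y\backsim x\wedge y)$, and substitute $w:=y\backsim x\wedge y$ into $\varphi(w)\le\varphi(z\wedge w\thicksim z)+\varphi(z)$.

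For the converse I would assume $(cpv_3)$ and $(cpv_4)$ and simply take $z:=1$. Then $\varphi(z)=\varphi(1)=0$ by $(pv_1)$, while the inner arguments collapse using the top element: $z\backsim (x\wedge y\thicksim y)\wedge z$ becomes $1\backsim(x\wedge y\thicksim y)=x\wedge y\thicksim y$ and $(y\backsim x\wedge y)\wedge z\thicksim z$ becomes $(y\backsim x\wedge y)\thicksim 1=y\backsim x\wedge y$, where I use $u\wedge 1=u$ together with $(A_3)$ (that is, $1\backsim u=u$ and $u\thicksim 1=u$). Substituting these back turns $(cpv_3)$ into $(cpv_1)$ and $(cpv_4)$ into $(cpv_2)$, completing the equivalence.

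The computation is essentially routine, so the only point demanding care is the bracketing forced by the precedence convention: one must read $z\backsim (x\wedge y\thicksim y)\wedge z$ as $z\backsim\bigl((x\wedge y\thicksim y)\wedge z\bigr)$ and $(y\backsim x\wedge y)\wedge z\thicksim z$ as $\bigl((y\backsim x\wedge y)\wedge z\bigr)\thicksim z$, matching exactly the shapes $z\backsim z\wedge w$ and $z\wedge w\thicksim z$ produced by $(pv_2)$. I expect this parsing to be the only place where a misreading could derail the argument, and it is also what makes the specialisation $z:=1$ collapse cleanly.
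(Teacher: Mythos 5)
Your proof is correct and follows essentially the same route as the paper: the forward direction chains $(cpv_1)$ (resp.\ $(cpv_2)$) with the instance of $(pv_2)$ having first argument $z$, and the converse specializes $z:=1$ and collapses the terms via $(pv_1)$, $u\wedge 1=u$ and $(A_3)$. Your explicit remark on the parsing of $z\backsim (x\wedge y\thicksim y)\wedge z$ and the use of commutativity of $\wedge$ just makes precise what the paper leaves implicit.
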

\begin{proof}
Let $\varphi$ be a commutative pseudo-valuation on $A$, that is $\varphi$ satisfies conditions $(cpv_1)$ and $(cpv_2)$. 
By $(cpv_1)$ and $(pv_2)$ we have: $\varphi(x\thicksim x\vee_1 y)-\varphi(z)\le 
\varphi(x\wedge y\thicksim y)-\varphi(z)\le \varphi(z\backsim (x\wedge y\thicksim y)\wedge z)$, that is $(cpv_3)$. 
Similarly from $(cpv_2)$ and $(pv_2)$ we get $(cpv_3)$. \\
Conversely, let $\varphi$ be a pseudo-valuation on $A$ satisfying conditions $(cpv_3)$ and $(cpv_4)$. 
Taking $z=1$ we get $(cpv_1)$ and $(cpv_2)$, hence $\varphi$ is commutative.
\end{proof}

\begin{prop} \label{va-eq-60} If $\varphi\in \mathcal{PV}^{c}_{EQA}(A)$, then $D_{\varphi}\in \mathcal{DS}_c(A)$.  
\end{prop}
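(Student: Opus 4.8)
The plan is to note that $D_\varphi \in \mathcal{DS}(A)$ has already been established in Proposition \ref{va-eq-30}, so $D_\varphi$ is genuinely a deductive system; what remains is to verify the two extra commutativity conditions $(cds_1)$ and $(cds_2)$ of Definition \ref{com-ds-10}. Using the $\vee_1, \vee_2$ shorthand recorded immediately after that definition, the goal reduces to showing that $x\wedge y\thicksim y\in D_\varphi$ implies $x\thicksim x\vee_1 y\in D_\varphi$, and that $y\backsim x\wedge y\in D_\varphi$ implies $x\vee_2 y\backsim x\in D_\varphi$.

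For the first implication, I would assume $x\wedge y\thicksim y\in D_\varphi$, i.e.\ $\varphi(x\wedge y\thicksim y)=0$. Condition $(cpv_1)$ in the definition of a commutative pseudo-valuation (Definition \ref{va-eq-40}) gives $\varphi(x\thicksim x\vee_1 y)\le \varphi(x\wedge y\thicksim y)=0$, while Proposition \ref{va-eq-20}$(2)$ guarantees $\varphi(x\thicksim x\vee_1 y)\ge 0$. Hence $\varphi(x\thicksim x\vee_1 y)=0$, so $x\thicksim x\vee_1 y\in D_\varphi$, which is exactly $(cds_1)$.

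For the second implication, I would assume $\varphi(y\backsim x\wedge y)=0$. Now condition $(cpv_2)$ yields $\varphi(x\vee_2 y\backsim x)\le \varphi(y\backsim x\wedge y)=0$, and nonnegativity from Proposition \ref{va-eq-20}$(2)$ again forces $\varphi(x\vee_2 y\backsim x)=0$, so $x\vee_2 y\backsim x\in D_\varphi$; this is $(cds_2)$. Combining these with Proposition \ref{va-eq-30}, I would conclude $D_\varphi\in \mathcal{DS}_c(A)$.

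I do not expect a genuine obstacle: the argument is simply the defining inequalities $(cpv_1),(cpv_2)$ squeezed against the nonnegativity of $\varphi$. The only step needing care is the bookkeeping that identifies the $\vee_1/\vee_2$ expressions in $(cpv_1),(cpv_2)$ with the explicit $\thicksim/\backsim$ expressions appearing in $(cds_1),(cds_2)$, which is precisely the translation noted right after Definition \ref{com-ds-10}.
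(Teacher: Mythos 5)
Your proof is correct and follows essentially the same route as the paper's: assume the hypothesis of $(cds_1)$ (resp.\ $(cds_2)$), apply $(cpv_1)$ (resp.\ $(cpv_2)$) to bound $\varphi(x\thicksim x\vee_1 y)$ (resp.\ $\varphi(x\vee_2 y\backsim x)$) by $0$, and conclude membership in $D_\varphi$. If anything, you are slightly more explicit than the paper, which leaves the appeals to Proposition \ref{va-eq-30} (that $D_\varphi$ is a deductive system) and to nonnegativity from Proposition \ref{va-eq-20}$(2)$ implicit.
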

\begin{proof}
Let $x, y\in A$ such that $x\wedge y\thicksim y\in D_{\varphi}$, that is $\varphi(x\wedge y\thicksim y)=0$.  
By $(cpv_1)$, $\varphi(x\thicksim x\vee_1 y)\le \varphi(x\wedge y\thicksim y)=0$, hence  
$\varphi(x\thicksim x\vee_1 y)=0$, so $x\thicksim x\vee_1 y\in D_{\varphi}$. 
Similary from $y\backsim x\wedge y\in D_{\varphi}$, applying $(cpv_2)$ we get $x\vee_2 y\backsim x\in D_{\varphi}$. 
Thus $D_{\varphi}\in \mathcal{DS}_c(A)$.
\end{proof}

\begin{prop} \label{va-eq-70} If $A$ is commutative, then $\mathcal{PV}^{c}_{EQA}(A)=\mathcal{PV}_{EQA}(A)$.
\end{prop}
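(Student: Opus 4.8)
The plan is to establish the two inclusions separately. One of them holds for every pseudo equality algebra, with no commutativity hypothesis: by Definition \ref{va-eq-40} a commutative pseudo-valuation is nothing but a pseudo-valuation satisfying the two extra inequalities $(cpv_1)$ and $(cpv_2)$, so $\mathcal{PV}^{c}_{EQA}(A)\subseteq \mathcal{PV}_{EQA}(A)$ is immediate. Thus the real content of the statement is the reverse inclusion $\mathcal{PV}_{EQA}(A)\subseteq \mathcal{PV}^{c}_{EQA}(A)$ under the assumption that $A$ is commutative.

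For that direction I would exploit commutativity only through the equational identities it forces. Recalling from the proof of Proposition \ref{com-ds-20} (which is a restatement of Theorem \ref{com-eq-70}) that in a commutative pseudo equality algebra
\[
x\wedge y\thicksim y=x\thicksim ((x\wedge y\thicksim x)\backsim y),\qquad
y\backsim x\wedge y=(y\thicksim (x\backsim x\wedge y))\backsim x
\]
hold for all $x,y\in A$, and rewriting the right-hand sides with the notation $x\vee_1 y=(x\wedge y\thicksim x)\backsim y$ and $x\vee_2 y=y\thicksim (x\backsim x\wedge y)$, these become $x\wedge y\thicksim y=x\thicksim x\vee_1 y$ and $y\backsim x\wedge y=x\vee_2 y\backsim x$. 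Now for an arbitrary $\varphi\in \mathcal{PV}_{EQA}(A)$ I would simply apply $\varphi$ to both sides, obtaining $\varphi(x\thicksim x\vee_1 y)=\varphi(x\wedge y\thicksim y)$ and $\varphi(x\vee_2 y\backsim x)=\varphi(y\backsim x\wedge y)$. These equalities yield in particular the inequalities $(cpv_1)$ and $(cpv_2)$, so $\varphi$ is commutative; combining with the first inclusion gives $\mathcal{PV}^{c}_{EQA}(A)=\mathcal{PV}_{EQA}(A)$.

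I do not anticipate a substantive obstacle here, since once the commutativity identities are available the verification is a direct substitution and uses nothing about $\varphi$ beyond its being a function. The only point demanding care is the parsing of the compound terms: one must read $x\thicksim x\vee_1 y$ as $x\thicksim (x\vee_1 y)$ and $x\vee_2 y\backsim x$ as $(x\vee_2 y)\backsim x$, exactly as in the reformulation following Definition \ref{com-ds-10}, so that the arguments of $\varphi$ in $(cpv_1)$ and $(cpv_2)$ are matched to the correct sides of the two identities drawn from Theorem \ref{com-eq-70}.
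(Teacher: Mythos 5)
Your proposal is correct and follows essentially the same route as the paper: both reduce the statement to the identities $x\wedge y\thicksim y=x\thicksim x\vee_1 y$ and $y\backsim x\wedge y=x\vee_2 y\backsim x$ forced by commutativity (the paper derives them from Proposition \ref{ps-eq-30} together with $x\vee_1 y=y\vee_1 x$, which is exactly how Theorem \ref{com-eq-70}$(b)$ is proved), and then applies $\varphi$ to obtain $(cpv_1)$ and $(cpv_2)$ as equalities. Your explicit remark that the inclusion $\mathcal{PV}^{c}_{EQA}(A)\subseteq \mathcal{PV}_{EQA}(A)$ is definitional is a point the paper leaves implicit, but it does not change the substance.
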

\begin{proof} Let $\varphi\in \mathcal{PV}_{EQA}(A)$ and $x, y\in A$. 
Applying Proposition \ref{ps-eq-30} we have  
$x\wedge y\thicksim y=x\thicksim ((x\wedge y\thicksim y)\backsim x)=x\thicksim y\vee_1 x=x\thicksim x\vee_1 y$, hence 
$\varphi(x\wedge y\thicksim y)=\varphi(x\thicksim x\vee_1 y)$.
Similarly $\varphi(y\backsim x\wedge y)=\varphi(x\vee_2 y\backsim x)$, hence $(cpv_1)$ and $(cpv_2)$ are satisfied. 
Thus $\varphi\in \mathcal{PV}^{c}_{EQA}(A)$. We conclude that $\mathcal{PV}^{c}_{EQA}(A)=\mathcal{PV}_{EQA}(A)$.
\end{proof}

\begin{rem} \label{va-eq-80} 
Given a pseudo BCK-meet-semilattice $(B,\wedge, \rightarrow, \rightsquigarrow,1)$, a pseudo-valuation on $B$ 
can be defined as a real-valued function $\varphi:B\longrightarrow {\mathbb R}$ satisfying the conditions: 
$(i)$ $\varphi(1)=0$, $(ii)$ $\varphi(y)-\varphi(x)\le \min\{\varphi(x\rightarrow y), \varphi(x\rightsquigarrow y)\}$  
for all $x, y\in B$. Denote $\mathcal{PV}_{BCK}(B)$ the set of all pseudo-valuations on $B$. 
Similarly as in the case of measures and internal states we can show that
$\mathcal{PV}_{EQA}(A)\subseteq \mathcal{PV}_{BCK}(\Psi(A))$ and 
$\mathcal{PV}_{BCK}(B)\subseteq \mathcal{PV}_{EQA}(\Phi(B))$. 
\end{rem}

$\vspace*{5mm}$

\section{Concluding remarks}

As mentioned in the Introduction, a new concept of FTT has been developed with the structure of truth values 
formed by a linearly ordered good EQ$_{\Delta}$-algebra (\cite{Nov9}) and a fuzzy-equality based logic 
called EQ-logic has also been introduced (\cite{Nov10}).
The study of pseudo equality algebras has the purpose to develop appropriate algebraic semantics 
for FTT, so a concept of FTT should be introduced based on these algebras.  
At the same time, pseudo equality algebras could be intensively studied from an algebraic point of view.
Commutativity property proved to play an important role for studying states, measures and internal states on 
multiple-valued logic algebras.
In this paper we defined and studied the commutative pseudo equality algebras and the commutative deductive 
systems of pseudo equality algebras. We proved certain results regarding the commutative deductive systems of 
pseudo equality algebras and we gave a characterization of commutative pseudo equality algebras in terms of 
commutative deductive systems. We applied these results to investigate the measures, measure-morphisms,  
internal states and pseudo-valuations on pseudo equality algebras.
As another direction of research, one could find axiom systems for commutative pseudo equality algebras and 
prove similar results as in \cite{Ciu7} for commutative pseudo BCK-algebras.

$\vspace*{5mm}$

\section* {\bf\leftline {Compliance with Ethical Standards}}

Confict of interest: The author declares that he has no conflict of interest.




$\vspace*{5mm}$

\setlength{\parindent}{0pt}

\vspace*{3mm}

\begin{flushright}
\begin{minipage}{148mm}\sc\footnotesize
Lavinia Corina Ciungu\\
Department of Mathematics \\
University of Iowa \\
14 MacLean Hall, Iowa City, Iowa 52242-1419, USA \\
{\it E--mail address}: {\tt lavinia-ciungu@uiowa.edu}

\end{minipage}
\end{flushright}

\end{document}